\newtheorem{theoremletter}{Theorem}
\newtheorem{corollaryletter}{Corollary}
\newtheorem*{rep@theorem}{\rep@title}
\newcommand{\newreptheorem}[2]{%
\newenvironment{rep#1}[1]{%
 \def\rep@title{#2 \ref{##1}}%
 \begin{rep@theorem}}%
 {\end{rep@theorem}}}
\newtheorem{theorem}{Theorem}[section]
\newtheorem*{theorem*}{Theorem}
 \newtheorem*{conjecture*}{Conjecture}
  \newtheorem*{corollary*}{Corollary}
  \newtheorem{lemma}[theorem]{Lemma}
\newtheorem{corollary}[theorem]{Corollary}
\newtheorem{proposition}[theorem]{Proposition}
 \theoremstyle{definition}
 \newtheorem{definition}[theorem]{Definition} 
 \newtheorem{remark}[theorem]{Remark}
 \newtheorem{example}[theorem]{Example}
\numberwithin{equation}{section}
\newcommand {\N}{\mathbb{N}} 
\newcommand {\Z}{\mathbb{Z}} 
\newcommand {\Q}{\mathbb{Q}} 
\newcommand {\C}{\mathbb{C}}
\newcommand{\CC}{\mathcal{C}}
\newcommand{\DD}{\mathcal{D}}
\newcommand{\EE}{\mathcal{E}}
\newcommand{\LL}{\mathcal{L}}
\newcommand{\OO}{\mathcal{O}}
\newcommand{\VV}{\mathcal{V}}
\newcommand{\XX}{\mathcal{X}}
\newcommand{\YY}{\mathcal{Y}}
\newcommand{\Proj}{\mathbb{P}}
\DeclareMathOperator{\Ker}{Ker}
\DeclareMathOperator{\Id}{Id}
\DeclareMathOperator{\Aut}{Aut}
\DeclareMathOperator{\rank}{rank}
\DeclareMathOperator{\NS}{NS}
\DeclareMathOperator{\Pic}{Pic}
\begin{document}	
\title[Parshin-Arakelov theorem and  integral points on elliptic curves]
{On Parshin-Arakelov theorem and uniformity of $S$-integral sections on elliptic surfaces}
\author[Xuan-Kien Phung]{Xuan Kien Phung}
\address{Universit\'e de Strasbourg, CNRS, IRMA UMR 7501, F-67000 Strasbourg, France}
\email{phung@math.unistra.fr}
\subjclass[2010]{}
\keywords{Parshin-Arakelov theorem, $S$-integral points, uniformity, elliptic surfaces}

\begin{abstract}   
Let $f \colon X \to B$ be a complex elliptic surface and let $\DD \subset X$ 
be an integral divisor dominating $B$.    
It is well-known that the Parshin-Arakelov theorem implies 
the Mordell conjecture over complex function fields by 
a beautiful covering trick of Parshin.  
In this article, we construct a similar map in the context of $(S, \DD)$-integral points on  
elliptic curves over function fields to obtain a new proof 
of certain uniform finiteness results on the number 
of $(S, \DD)$-integral points. 
A second new proof is also given by establishing a uniform 
bound on the canonical height   by means of the tautological inequality. 
In particular, our construction provides certain uniform quantitative informations 
on the set-theoretic intersection of curves with the singular divisor 
in the compact moduli space of stable curves. 
\end{abstract}

\maketitle
 
\setcounter{tocdepth}{1}
\tableofcontents


\section{Introduction}
It is well-known that in the case of function fields in characteristic zero, 
the Parshin-Arakelov theorem (cf. Theorem \ref{t:parshin-arakelov}) implies the Mordell conjecture (cf. \cite{parshin-68}). 
Moreover, by establishing a uniform version of the Parshin-Arakelov theorem (cf. Theorem \ref{t:caporaso}), Caporaso obtained a certain uniform version of the Mordell conjecture over function fields. We explain this in some more details below. 
\par
\subsection*{Notations} 
The symbol $\sim$ stands for the linear equivalence relation of divisors. 
The  cardinality of a set $A$ is denoted by $\# A$. 
We fix throughout an irreducible smooth projective complex curve $B$ of genus $g$. 
Let $K= \C(B)$ be the function field of $B$. 
A family of curves $\YY \to B$ (resp. a curve $Y/K$) is \emph{isotrivial}  
if the induced moduli (resp. rational) map to the moduli space of curves is constant, 
or equivalently, if the Kodaira-Spencer class of the family $\YY/B$ (resp. of $Y/K$) 
is zero (cf. \cite[Chapter 3]{gasbarri}). 
\par
For a nonisotrivial minimal surface $X \to B$, we   call the \emph{type} of $X$ 
the finite subset $S \subset B$ above which the fibres of $X$ are not smooth. 
Let $F_q(B,S)$ be the set of non-isotrivial minimal surfaces over $B$ of type $S$ and whose general fibres have genus $q$. 
When $q \geq 2$,   we have the 
following celebrated result of Parshin-Arakelov:  

\begin{theorem}
[Parshin-Arakelov]
\label{t:parshin-arakelov} 
The set $F_q(B,S)$ is finite for every $q \geq 2$. 
\end{theorem}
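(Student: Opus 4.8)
The plan is to derive the finiteness of $F_q(B,S)$ from two independent inputs: a \emph{boundedness} statement, that the surfaces in question vary in an algebraic family of finite type, and a \emph{rigidity} statement, that no such surface admits a nontrivial deformation keeping $B$, $S$ and $q$ fixed. The first step is to pass from a surface $f\colon X\to B$ to its moduli map: the smooth part of the family induces a morphism $B\setminus S\to\MM_q$ which, by properness of the Deligne--Mumford--Knudsen--Mumford compactification $\overline{\MM}_q$, extends uniquely to $\phi_f\colon B\to\overline{\MM}_q$ with $\phi_f^{-1}(\partial\overline{\MM}_q)\subseteq S$; conversely $\phi_f$, together with the finite data of a twist of the generic fibre (finitely many, by the finiteness of $H^1(\pi_1(B\setminus S),-)$ with finite coefficients) and of a torsor under its automorphism group, recovers $X$. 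Thus it suffices to bound the set of moduli maps that occur and to show each one is rigid.

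\emph{Boundedness.} Let $\lambda$ denote the Hodge class on $\overline{\MM}_q$, i.e.\ the determinant of the Hodge bundle; it is big and nef on $\overline{\MM}_q$ and ample on the Satake compactification, so it is enough to bound $\deg\phi_f^{\ast}\lambda=\deg f_{\ast}\omega_{X/B}$. After base change to a cover of $B$ of degree bounded only in terms of $q$ (to achieve semistable reduction), the Arakelov inequality
\[
0<\deg f_{\ast}\omega_{X/B}\leq \tfrac{q}{2}\bigl(2g-2+\#S\bigr)
\]
supplies exactly such a bound, the strict lower inequality being precisely non-isotriviality. Hence every $\phi_f$ is a point of a finite-type subscheme of the Hom scheme $\underline{\Hom}(B,\overline{\MM}_q)$ --- the locus of maps of bounded degree whose image meets $\partial\overline{\MM}_q$ set-theoretically inside the prescribed reduced fibres over $S$ --- which therefore has finitely many irreducible components.

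\emph{Rigidity.} This is the heart of the theorem and the step I expect to be the main obstacle. Suppose $f\colon X\to B$ deformed nontrivially over a connected smooth curve $T$, giving a smooth family $\mathcal{X}\to B\times T$ whose discriminant in $B\times T$ equals $S\times T$. One first notes that nontriviality forces the induced map $\Phi\colon B\times T\to\overline{\MM}_q$ to be non-constant along a general vertical line $\{b\}\times T$: otherwise $\Phi$ factors through the projection to $B$, all members $X_t\to B$ share one moduli map, and since in characteristic zero two families of genus-$q$ curves over $B$ with the same moduli map differ by a torsor under a finite \'etale automorphism group scheme --- hence form a discrete set --- the deformation would be locally constant, a contradiction. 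One is then left with two transverse fibrations on the surface $B\times T$ mapping to $\overline{\MM}_q$, and the idea is to play the Arakelov inequality off against itself in the two directions: applying it to $\{t\}\times B$ and, after compactifying $T$ and extending the family while keeping track of the finitely many new degenerate fibres, to $\{b\}\times T$, and then comparing $\Phi^{\ast}\lambda$ against the two rulings, one forces a numerical contradiction (equivalently, one may run Parshin's original self-intersection bound for the relative dualizing sheaf, or invoke the hyperbolicity of $\MM_q$). Extracting this contradiction cleanly --- in particular controlling the boundary contributions under the compactification of the deformation family --- is the delicate point.

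\emph{Conclusion.} Granting both inputs: by boundedness each surface in $F_q(B,S)$ lies over one of finitely many irreducible components of the relevant Hom scheme, and by rigidity the deformation space of each surface vanishes, so --- comparing deformations of the surface, of its moduli map, and of the pair $(B,S)$ --- each such component is a single orbit under the subgroup of $\Aut(B)$ stabilizing $S$. That subgroup is finite: automatically when $g\geq 2$, and otherwise because a non-isotrivial family of genus $q\geq 2$ forces $\#S$ to be large --- indeed $\#S\geq 3$ when $B=\Proj^{1}$, by the Arakelov inequality itself --- which pins down all but finitely many base automorphisms. Since each moduli map accounts for only finitely many surfaces, $F_q(B,S)$ is finite.
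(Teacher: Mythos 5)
The paper does not prove this theorem at all: it simply cites Parshin (for $S=\varnothing$) and Arakelov (for the general case), so your proposal has to stand on its own, and its overall architecture --- boundedness of the moduli maps $\phi_f\colon B\to\overline{\MM}_q$ via the Arakelov inequality, plus rigidity of each family with $(B,S,q)$ fixed --- is indeed the architecture of the classical Parshin--Arakelov proof. The problem is that the rigidity half, which you yourself call the heart of the theorem, is not actually proved: you posit a nontrivial deformation over a curve $T$, observe that the induced map $\Phi\colon B\times T\to\overline{\MM}_q$ is non-constant in the $T$-direction, and then say the idea is to ``play the Arakelov inequality off against itself in the two directions'' to force a numerical contradiction, conceding that extracting the contradiction is ``the delicate point.'' As stated this does not work: applying the Arakelov inequality along the compactified vertical curves $\{b\}\times\overline{T}$ only bounds degrees there, and nothing in the two degree bounds is mutually inconsistent --- non-isotrivial families over the vertical curves (with new degenerations at $\overline{T}\setminus T$) are perfectly possible, so no contradiction falls out of degree counting alone. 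The actual content of Arakelov's rigidity theorem is an infinitesimal statement: the space of first-order deformations of $f\colon X\to B$ with fixed base and fixed degeneracy locus is controlled by cohomology of the relative (log) tangent sheaf, and it vanishes because non-isotriviality forces negativity of $R^1f_*T_{X/B}$ (equivalently, positivity of $f_*\omega_{X/B}^{\otimes 2}$); alternatively one invokes hyperbolicity-type theorems for $\MM_q$. None of this is supplied or replaced by your sketch, so the proof is incomplete at its central step.

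Two smaller points. First, in the boundedness step, bounding $\deg\phi_f^{*}\lambda$ does not by itself confine $\phi_f$ to a finite-type piece of $\underline{\Hom}(B,\overline{\MM}_q)$, because $\lambda$ is only big and nef on $\overline{\MM}_q$; you must also bound the degree against the boundary class $\delta$ (possible, since after semistable reduction the singular fibres lie over a set of bounded cardinality and each contributes at most $3q-3$ nodes) so as to bound the degree against an ample combination such as $a\lambda-\delta$, or else argue carefully through the Satake contraction. Second, the concluding paragraph is muddled: once rigidity is known, the points of the finite-type Hom-scheme locus you consider are isolated, hence finite in number per component, and the detour through orbits of the stabilizer of $S$ in $\Aut(B)$ is neither needed nor correctly justified (deformations of the map obtained by precomposing with automorphisms of $B$ are not deformations of a fixed family over $B$, and the ``single orbit'' claim does not follow from what you proved). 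Your observation that non-isotriviality forces $2g-2+\#S>0$, hence finiteness of the relevant automorphisms, is correct but should be used, if at all, only to rule out positive-dimensional reparametrization families, not as the finiteness mechanism itself.
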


\begin{proof}
See \cite{parshin-68} in the case $S= \varnothing$ and \cite{arakelov-71} for the general case. 
\end{proof} 

The above  theorem was latter reinforced by Caporaso as follows (\cite[Theorem 3.1]{capo-99}). 
 
\begin{theorem}
[Caporaso]
\label{t:caporaso}
There exists a function $C \colon \N^3 \to \N$ such that for all $q \geq 2$,  
\[
\#F_q(B,S)\leq C(q,g,s). 
\]
 \end{theorem}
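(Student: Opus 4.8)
The plan is to reprove Theorem~\ref{t:parshin-arakelov} while keeping track of the constants: encode each surface in $F_q(B,S)$ by a morphism to the moduli space $\overline{M}_q$ of stable genus-$q$ curves, bound the degree of that morphism via the Arakelov inequality, and then bound the number of such morphisms by a refined B\'ezout estimate on an ambient space of morphisms whose geometry is controlled in terms of $q$, $g$ and $s:=\#S$ only. \emph{Step~1 (classifying morphisms).} To $X\in F_q(B,S)$ one attaches its classifying morphism $\phi_X\colon B\to\overline{M}_q$, the extension to $B$ of the moduli map $B\setminus S\to M_q$ of the smooth fibres of $X$; it is non-constant since $X$ is non-isotrivial, and $\phi_X^{-1}(\Delta)\subseteq S$ with $\Delta=\overline{M}_q\setminus M_q$. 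Conversely, over $B\setminus S$ the family is determined by $\phi_X$ up to a twist by the finite automorphism group scheme of the generic fibre which is unramified outside $S$, together with the splitting datum of the moduli gerbe; hence $X$ is recovered from $\phi_X$ up to a finite ambiguity of cardinality at most an explicit $c(q,g,s)$, and it suffices to bound the number of morphisms $\phi_X$.

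\emph{Step~2 (degree bound).} Fix, for each $q\ge2$, a very ample line bundle $\mathcal{L}_q=(a\lambda-\delta)^{\otimes k}$ on $\overline{M}_q$, with $\lambda$ the Hodge class, $\delta=[\Delta]$, and $a=a(q)$, $k=k(q)$ chosen so that $a\lambda-\delta$ is ample (Cornalba--Harris) and the power is very ample. If $2g-2+s\le0$ then $F_q(B,S)=\varnothing$, so assume $2g-2+s>0$. The Arakelov inequality gives
\[
\deg\phi_X^{*}\lambda\;\le\;\frac{q}{2}\,(2g-2+s),
\]
while $\deg\phi_X^{*}\delta\ge0$ because $\phi_X(B)\not\subseteq\Delta$; hence $\deg\phi_X^{*}\mathcal{L}_q\le d$ for an explicit $d=d(q,g,s)$. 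Thus every $\phi_X$ lies in $\Hom_{\le d}(B,\overline{M}_q)$, the scheme of morphisms of $\mathcal{L}_q$-degree at most $d$, and in fact in the closed subscheme
\[
\mathcal{M}\;=\;\{\,\phi\in\Hom_{\le d}(B,\overline{M}_q):\ \phi(b)\in\Delta\ \text{for all}\ b\in S\,\}.
\]

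\emph{Step~3 (effective count).} By Arakelov's infinitesimal rigidity --- a non-isotrivial fibration over a fixed base has no non-trivial deformation, which together with Step~2 is the substance of Theorem~\ref{t:parshin-arakelov} --- each $\phi_X$ is an isolated point of $\mathcal{M}$: a positive-dimensional family inside $\mathcal{M}$ through $\phi_X$ would be generically non-constant, hence would generically consist of classifying morphisms of non-isotrivial fibrations, and these are rigid --- a contradiction. It therefore remains to bound the number of isolated points of $\mathcal{M}$. Embed $\overline{M}_q\hookrightarrow\mathbb{P}^N$ by $\mathcal{L}_q$ (so $N=N(q)$), so that $\overline{M}_q$ and $\Delta$ are cut out by homogeneous equations in bounded number and of bounded degree $e=e(q)$. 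Pulling these back --- the equations of $\overline{M}_q$, and for each of the $s$ points $b\in S$ the equations of $\Delta$ composed with the evaluation morphism $\phi\mapsto\phi(b)$ --- exhibits $\mathcal{M}$ as the zero locus in $\Hom_{\le d}(B,\mathbb{P}^N)$ of a system of at most $\rho(q,g,s)$ equations of degree $\le e$. Now $\Hom_{\le d}(B,\mathbb{P}^N)$ is, stratum by stratum, an open subscheme of a projective bundle of relative dimension $\le(N+1)(d+1)$ over a subscheme of some $\Pic^{d'}(B)$, $d'\le d$; hence it embeds as a locally closed subscheme of a projective variety of dimension at most $D(q,g,s):=g+(N+1)(d+1)$ and of degree at most $A(q,g,s)$, quantities depending only on $N$, $d$ and $g$ --- and in particular not on which curve of genus $g$ the base $B$ is. A refined B\'ezout inequality then bounds the number of isolated points of $\mathcal{M}$ by $A(q,g,s)\cdot e(q)^{D(q,g,s)}$. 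Combining with Step~1, $\#F_q(B,S)\le c(q,g,s)\cdot A(q,g,s)\cdot e(q)^{D(q,g,s)}=:C(q,g,s)$.

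\emph{Expected main obstacle.} The classical inputs --- stable reduction, the Arakelov inequality, Arakelov's rigidity --- go in essentially unchanged; the work lies in two places. First, the moduli-stack bookkeeping: keeping the ambiguity of Step~1 bounded by a function of $q$, $g$, $s$, and recasting rigidity as isolatedness in an honest scheme of morphisms --- this is cleanest after passing to a fine moduli space $\overline{M}_q[\ell]$ of curves with level-$\ell$ structure, which a given fibration acquires after a base change $B'\to B$ of degree bounded in terms of $q$ and (by Riemann--Hurwitz) of genus bounded in terms of $q$, $g$, $s$, with the case $q=2$ needing separate attention. Second, making the B\'ezout step genuinely effective: the Picard-bundle presentation of $\Hom_{\le d}(B,\mathbb{P}^N)$ must carry explicit bounds on its dimension and degree and on the number and degrees of the equations cutting out $\mathcal{M}$, so that the resulting count depends on $B$ only through $g$. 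These are precisely the points at which Caporaso's argument concentrates its effort.
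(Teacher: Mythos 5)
The paper itself offers no proof of this statement: it is quoted directly from Caporaso (\cite[Theorem 3.1]{capo-99}), so your proposal has to be measured against her argument. Your overall architecture is indeed the right one and matches hers in outline: attach to each $X \in F_q(B,S)$ a classifying map to $\overline{M}_q$, bound its degree against an ample class of the form $a\lambda-\delta$ via an Arakelov-type inequality, and then combine Arakelov rigidity with a uniform count on a parameter scheme. The genuine gap is in your Step~3. First, the incidence condition is wrong: $X\in F_q(B,S)$ gives $\phi_X^{-1}(\Delta)\subseteq S$, not $\phi_X(S)\subseteq\Delta$. At a point $b\in S$ of potentially good (but not good) reduction the minimal model has a non-smooth fibre while $\phi_X(b)\notin\Delta$, so such $\phi_X$ do not even lie in your closed scheme $\mathcal{M}$ and those $X$ escape the count; conversely, membership in $\mathcal{M}$ permits arbitrary extra degenerations outside $S$.

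Second, and more seriously, the deduction ``rigidity $\Rightarrow$ $\phi_X$ is isolated in $\mathcal{M}$'' does not follow as stated. A non-constant arc through $\phi_X$ in $\Hom_{\le d}(B,\overline{M}_q)$ (or in your $\mathcal{M}$) need not consist of classifying maps of families at all ($\overline{M}_q$ is only a coarse space; the level-structure fix you defer to the ``obstacles'' paragraph is needed already to make this step meaningful), and even when it does, the nearby families may acquire degenerate fibres outside $S$; Arakelov rigidity is rigidity for a \emph{fixed} degeneracy locus $S$ and says nothing about such deformations. They genuinely occur: pulling back a fixed non-isotrivial fibration along a positive-dimensional family of covers $B\to\Proj^1$ of bounded degree produces non-constant arcs in $\Hom_{\le d}(B,\overline{M}_q)$ through classifying maps of non-isotrivial fibrations, so isolatedness can only come from the full condition $\phi^{-1}(\Delta)\subseteq S$, which is neither closed nor imposed by your equations. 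Caporaso deals with exactly this point by parametrizing the fibrations themselves (after level structure) together with their degeneracy data, obtaining isolatedness in the fibres of a forgetful morphism, and then invoking a constructibility lemma bounding the number of isolated points in the fibres of a morphism of finite-type schemes; this also avoids your unproved assertion that a compactification of $\Hom_{\le d}(B,\mathbb{P}^N)$ has dimension and degree bounded independently of which genus-$g$ curve $B$ is, which your B\'ezout count requires.
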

 
 Let $(X \to B) \in F_q(B,S)$ where $q \geq 2$. In \cite{parshin-68}, Parshin constructed a map $\alpha_X$ which associates 
to each section 
$\sigma \in  X(B)$ a nonisotrivial families of curves of bounded genus $Y_\sigma \to B$ 
which factors through $X \to B$ and such that 
$\sigma(B)$ is exactly the branch locus of the cover $Y_\sigma \to X$. 
The obtained nonisotrivial families of curves $Y_\sigma$ are all defined over a 
finite number of curves $B'$ and the possible types $S' \subset B'$ are also finite. 
The key property of the map $\alpha_X$ is that all its fibres are finite. This is direct implication 
of the property that $\sigma$ is uniquely determined as the branch locus of the cover $Y_\sigma \to X$ 
and that the number of possible covers $Y_\sigma \to X$ is finite by the de Franchis theorem since 
$q \geq 2$.  
Therefore, the Parshin-Arakelov theorem implies the Mordell conjecture over function fields, i.e., 
$X(B)=X_K(K)$ is finite. 
With additional arguments proving that $B', S'$ can be taken in a uniformly bounded family and 
the cardinalities of the fibres of $\alpha_X$ are uniformly bounded, 
Theorem \ref{t:caporaso} of Caporaso implies the following  uniform version of the Mordell conjecture. 

\begin{theorem}
[Caporaso] 
\label{t:mordell-uniform-caporaso} 
There exists a function $M \colon \N^3 \to \N$ such that 
for every $q \geq 2$ and $X \in F_q(B,S)$, the set $X_K(K)=X(B)$ contains at most $M(q,g,s)$ rational points. 
\end{theorem}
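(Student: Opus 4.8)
The strategy is to combine Parshin's covering construction with Caporaso's uniform version of the Parshin–Arakelov theorem, being careful to control uniformity at each step. Fix $X \in F_q(B,S)$ with $q \geq 2$, and let $\alpha_X$ be the Parshin map sending a section $\sigma \in X(B)$ to a nonisotrivial family of curves $Y_\sigma \to B'$, factoring through $X \to B$, with $\sigma(B)$ equal to the branch locus of the cover $Y_\sigma \to X$. The first step is to bound, in terms of $q$ and $g$ alone, the genus $q'$ of the general fibre of $Y_\sigma$, the genus $g'$ of the base curve $B'$, and the cardinality $s'$ of the type $S' \subset B'$. This is a geometric computation: the genus of $Y_\sigma$ is governed by the branched cover data (degree of the cover and ramification along $\sigma(B)$), which in Parshin's construction depends only on $q$; the curve $B'$ arises from the Stein factorization / base change needed to trivialize torsion, so its genus is bounded in terms of $g$ and fixed covering degrees; and the bad locus $S'$ lies above $S$ together with a controlled contribution from the new branching, hence $s' \leq \varphi(q,g,s)$ for an explicit $\varphi$.

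The second step is to invoke Theorem~\ref{t:caporaso}: the number of possible targets $Y_\sigma \to B'$ (for all admissible $B'$ and $S'$ within the bounded family from Step 1) is at most a finite sum of terms $C(q', g', s')$, hence bounded by some $N_1(q,g,s)$. The third step is to bound the fibres of $\alpha_X$: given a fixed family $Y \to B'$ in the finite target set, how many sections $\sigma$ can map to it? Since $\sigma(B)$ is recovered as the branch locus of $Y_\sigma \to X$, the section $\sigma$ is determined by the $X$-morphism $Y_\sigma \to X$; and the number of such morphisms (equivalently, the number of nonconstant maps from the fibres of $Y$ to those of $X$ over $B$, up to the relevant equivalence) is finite by the de Franchis–Severi theorem, because $q \geq 2$. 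To make this uniform one needs a uniform de Franchis bound: the number of dominant rational maps from one curve of genus $q'$ to another of genus $q$ is bounded purely in terms of $q$ and $q'$ (Howard–Sommese, Kani, Alzati–Pirola), and these degenerate uniformly in families. This yields $\#\alpha_X^{-1}(Y) \leq N_2(q,g)$. Combining, $\#X(B) = \#X_K(K) \leq N_1(q,g,s)\cdot N_2(q,g) =: M(q,g,s)$.

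The main obstacle is Step 3 — extracting genuinely uniform finiteness for the fibres of $\alpha_X$. Pointwise finiteness of $\alpha_X^{-1}(Y)$ follows immediately from de Franchis, but uniformity requires that the de Franchis bound depend only on the genera (not on the specific curves) and, more subtly, that this bound behave well as the fibres of $X$ and $Y$ degenerate over $S$ and $S'$; one must pass from the generic fibre to the whole surface and argue that a bounded family of $X$-morphisms on generic fibres extends to a bounded family of rational maps $Y_\sigma \dashrightarrow X$ over $B$. This is precisely the kind of argument Caporaso carries out, and one should either cite it or reproduce it via a Hilbert-scheme / bounded-family argument: the relevant morphisms $Y \to X$ over $B$ are parametrized by a scheme of finite type whose number of components is bounded in terms of the numerical invariants, using the uniform de Franchis bound on a general fibre plus properness. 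Once this bounded-family statement is in hand, the remaining steps are the bookkeeping of Steps 1 and 2, which are routine given Theorem~\ref{t:caporaso}.
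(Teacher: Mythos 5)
Your proposal follows essentially the same route as the paper, which for this statement simply cites Caporaso's Theorem 4.2 and describes exactly the strategy you outline in its introduction: Parshin's covering construction $\alpha_X$, the uniform Parshin--Arakelov theorem (Theorem \ref{t:caporaso}) to bound the possible targets over a uniformly bounded family of bases $B'$ and types $S'$, and a uniform de Franchis bound to control the fibres of $\alpha_X$. Your identification of the uniformity of the fibres as the delicate step, to be handled by effective de Franchis bounds plus a boundedness-in-families argument, is precisely the content of Caporaso's argument that the paper defers to \cite{capo-99}.
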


\begin{proof}
See \cite[Theorem 4.2]{capo-99}. 
\end{proof}

\subsection{$(S, \DD)$-integral points}

The main objects studied in this article is 
the following. 

\begin{definition} 
[$(S, \DD)$-integral point and section] 
\label{d:s-d-integral-point-geometric}
 Let $f \colon  X \to B$ be a proper flat morphism of integral varieties. 
Let $S \subset B$ be a subset 
and let $\DD \subset X$ be an effective divisor. 
A section $\sigma \colon B \to X$ is $(S, \DD)$\emph{-integral}
 if it satisfies the set-theoretic condition: 
\begin{equation}
\label{e:s-d-integral-point-geometric}
f(\sigma(B) \cap \DD) \subset S. 
\end{equation}
For every $P \in X_K(K)$, let $\sigma_P \colon B \to X$ 
be  the induced section. 
Then $P$ is said to be $(S, \DD)$\emph{-integral} 
if the section $\sigma_P$ is $(S, \DD)$-integral. 
\end{definition} 

Definition \ref{d:s-d-integral-point-geometric} is the geometric interpretation  
of integral solutions of Diophantine equations. 
A more  general definition of integral points is proposed and studied 
in \cite{phung-19-abelian}.  

\subsection{Parshin's map $\alpha_X$ and integral points}

The first goal of the article is to construct a map analogous 
to Parshin's map $\alpha_X$ to obtain a new proof for known uniform finiteness results 
(cf. Corollary \ref{c:height-iso-elliptic-silverman}) 
on integral points on a nonisotrivial elliptic curve.  
Let $f \colon X\to B$ be a nonisotrivial elliptic surface of type $T$ of cardinal $t$.  
Let $\DD$ be an effective reduced horizontal divisor in $X$ 
and $S \subset B$ of cardinal $s\in \N$. 
We show that 
(cf. Section \ref{section:uniform-caporaso-elliptic}): 

\begin{theoremletter} 
\label{t:uniform-caporaso-elliptic-1}
The set of $(S, \DD)$-integral sections of $X$ is finite and 
uniformly bounded by a function depending only on $g$, $s$, $t$, 
$\deg \DD_K$, the number of 
ramified points in the cover $\DD \to B$, 
and the number of singular points on $\DD$.     
\end{theoremletter}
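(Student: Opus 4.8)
The plan is to construct an analogue of Parshin's map $\alpha_X$ in the setting of $(S,\DD)$-integral points and then invoke the uniform Parshin-Arakelov theorem of Caporaso (Theorem \ref{t:caporaso}). Given a nonisotrivial elliptic surface $f\colon X\to B$ of type $T$ and an $(S,\DD)$-integral section $\sigma\in X(B)$, the idea is to produce from the data $(X,\sigma)$ a new fibred curve $Y_\sigma\to B'$ over a curve $B'$ lying in a uniformly bounded family, whose general fibre has genus $\geq 2$ and genus bounded in terms of the stated invariants, and whose type $S'\subset B'$ is of uniformly bounded cardinality, in such a way that $\sigma$ can be recovered (up to finite ambiguity) from $Y_\sigma$. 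The natural construction is to take a suitable abelian cover of $X$ ramified over $\sigma(B)$, $\DD$, and the singular fibres $f^{-1}(T)$: for instance, fix a point $O$ on each fibre (the origin, using that $X/B$ is an elliptic surface so it carries a section $0\colon B\to X$ after a finite base change of bounded degree, or work relative to the Jacobian) and take the fibre product of $X$ with a translation-by-$\sigma$ twisted multiplication map, or more simply pull back under multiplication-by-$n$ on the elliptic fibration to a surface where $\sigma$ becomes related to torsion; then the resulting cover, restricted to fibres, has genus growing with the ramification and is nonisotrivial because $X$ is.

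First I would make precise the base change: replace $B$ by a cover $B'\to B$ of degree bounded in terms of $s,t$ and $\deg\DD_K$ so that $X\times_B B'$ acquires a section (the zero section) and so that $\DD$ and the singular locus become ``nice'' (e.g. the ramification points of $\DD\to B$, which number some bounded quantity $r$, and the singular points of $\DD$, numbering some bounded quantity $\delta$, are controlled). The new type $S'$ then has cardinality bounded by $s+t+r+\delta$ times the degree of $B'\to B$, hence by a function of the stated invariants, and the genus $g'$ of $B'$ is bounded by Riemann-Hurwitz in terms of $g$ and the same data. Second, over $B'$ I would form the cover $Y_\sigma\to X'$ branched exactly along $\sigma(B')$: concretely, take $\varphi\colon X'\to X'$ an isogeny or a degree-$2$ (or degree-$N$ for fixed small $N$) cyclic cover associated to the line bundle $\OO_{X'}(\sigma(B') - 0(B') + \text{(correction supported on fibres)})$, chosen $N$-torsion after a further bounded base change; restricting $Y_\sigma\to X'\to B'$ to a general fibre gives a cyclic cover of an elliptic curve branched at a point, which is a smooth curve of genus $\geq 2$ (genus exactly $\lceil N/2\rceil$ or so for the relevant $N$), and nonisotriviality of $X'/B'$ forces $Y_\sigma/B'$ to be nonisotrivial. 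Thus $Y_\sigma\in F_q(B',S')$ for some $q\geq 2$ bounded by the data.

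Third, I would verify the key finiteness-with-bound property of the map $\sigma\mapsto Y_\sigma$. The branch divisor of $Y_\sigma\to X'$ determines $\sigma(B')$, hence $\sigma$, uniquely; and for a fixed target surface $Z=Y_\sigma$ the number of finite covers $Z\to X'$ of bounded degree is finite by the de Franchis--Severi type theorem (since the general fibre of $Z$ has genus $\geq 2$), with cardinality bounded in terms of the genus and the degree, hence in terms of our invariants. Therefore the fibres of $\sigma\mapsto Y_\sigma$ are finite of uniformly bounded cardinality. Combining: the number of $(S,\DD)$-integral sections is at most (number of possible $(B',S',q)$) $\times\ \#F_q(B',S')\ \times$ (bound on fibre cardinality), and each factor is bounded by a function of $g,s,t,\deg\DD_K,r,\delta$ via Theorem \ref{t:caporaso} and the classical bounds. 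Since each integral section of the original $X$ pulls back to an integral section of $X'$, and the base change $B'\to B$ has bounded degree, finiteness and the bound descend to $X$.

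The main obstacle I expect is the construction of the branched cover $Y_\sigma\to X'$ together with control of its numerical invariants: one must choose the cyclic (or abelian) cover so that (i) it is branched \emph{exactly} along $\sigma(B')$ and not along spurious fibral components — this is where the divisor $\sigma(B') - 0(B')$ must be adjusted by a fibral correction term to become divisible by $N$ in $\Pic(X')$, and bounding that correction (hence bounding the degree of the extra base change and of $B'$) is the delicate point; and (ii) the resulting surface is minimal-of-general-type-after-resolution with bounded $q$ and remains nonisotrivial, which requires checking that the Kodaira--Spencer class does not vanish — this follows because the cover map $Y_\sigma\to X'$ is generically finite and isotriviality of $Y_\sigma$ would force isotriviality of $X'$, contradicting $X\to B$ nonisotrivial. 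Making the dependence of all bounds explicit and uniform, rather than merely proving finiteness, is the part that genuinely needs Caporaso's theorem and careful bookkeeping of the base changes.
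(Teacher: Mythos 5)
Your first paragraph gestures at the right construction (a cover branched over $\sigma(B)$, $\DD$ and vertical divisors), but the operative version in your second and third paragraphs drops $\DD$ from the branch locus and this is a genuine gap, not a bookkeeping issue. Two symptoms. First, with branch locus just $\sigma(B')$ (plus fibral corrections) the integrality hypothesis never enters: nothing in your argument uses $S$, so it would equally ``bound'' the set of \emph{all} sections of $X$, which is false whenever the Mordell--Weil rank is positive. In the paper the role of integrality is precisely that the singular points of the horizontal branch divisor $(P)+\DD$, namely $(P)\cap\DD$ together with $\DD_{sing}$ and $\DD_{ram}$, lie over $S\cup T$ plus a set of size $\#\DD_{ram}+\#f(\DD_{sing})$, which is what bounds the type $S'$ of $Y_P$. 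Second, your counting step is wrong as stated: ``the number of finite covers $Z\to X'$ of bounded degree is finite by de Franchis--Severi since the general fibre of $Z$ has genus $\geq 2$'' misplaces the hypothesis — de Franchis requires the \emph{target} to have genus $\geq 2$, and here the target fibre is elliptic. The correct statement (Tamme--Kani, Theorem \ref{c:kani}) gives finiteness only up to composition with $\Aut_B(X)=\Aut_K(E)=E(K)\rtimes\{\pm\Id\}$, an infinite group. With branch locus only the section, functoriality of cyclic covers (Proposition \ref{p:cyclic-covers-fonctorial}) shows that translation by $P-P'$ carries the cover attached to $P'$ to the one attached to $P$ (up to a bounded torsion ambiguity in the choice of $L$), so all the $Y_\sigma$ are $B$-isomorphic and the fibres of $\sigma\mapsto Y_\sigma$ are infinite whenever $E(K)$ is: the strategy collapses exactly at the point it is supposed to deliver the bound.

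The paper's key new idea, which your proposal needs, is to force the horizontal branch locus to be $(P)\cup\DD$: writing $(P)+\DD+(\text{vertical})\sim (d+1)L_P$ after a bounded base change supplied by Lemma \ref{p:2-division}, one takes the degree $d+1$ simple cyclic cover (note, fibrewise, an abelian cover of an elliptic curve cannot be branched at a single point, and the branch degree must be divisible by the cover degree — so your fibrewise picture ``cyclic cover branched at a point'' is also not available; the paper gets genus $q=(d^2+d+2)/2$ from branch degree $d+1$). Then an automorphism $u$ identifying two covers must satisfy $u\{(P'),\DD\}=\{(P),\DD\}$; for $d\geq 2$ this forces $u(D)\sim D$ on the generic fibre, and Lemma \ref{l:auto-elliptic} confines $u$ to translations by $d$-torsion and one extra involution class, giving fibres of $\beta$ of size at most $4d^2M(q,d+1)$, which combined with Caporaso's bound on $\#F_q(B,S')$ yields the theorem. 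Without $\DD$ in the branch locus there is no mechanism to cut the infinite ambiguity $E(K)$ down to a finite one.
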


When $\DD$ is a section, $\deg \DD_K=1$ and $\#\DD_{ram} \cup \DD_{sing}= \varnothing$. 
We recover in particular the uniform result in \cite{hindry-silverman-88} whose proof uses height theory: 

\begin{corollary}
Let $(O)$ be the zero section of $X$. 
The set of $(S, \DD)$-integral sections is uniformly 
bounded by a function depending only on $g$, $s$, $t$. 
\end{corollary}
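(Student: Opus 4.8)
The plan is to deduce the Corollary directly from Theorem~\ref{t:uniform-caporaso-elliptic-1} by specializing to the case where $\DD = (O)$ is the zero section. The key observation is that a section $\sigma \colon B \to X$ is a closed integral subvariety of $X$ isomorphic to $B$ via $f$, so it is a \emph{smooth} curve dominating $B$. Hence $\DD_{sing} = \varnothing$, and moreover the projection $(O) \to B$ induced by $f$ is an isomorphism, so it is everywhere unramified and $(O)_{ram} = \varnothing$. Finally, the generic fibre $(O)_K$ is a single $K$-rational point, so $\deg (O)_K = 1$. Plugging these three values into the bound furnished by Theorem~\ref{t:uniform-caporaso-elliptic-1}, the function controlling the number of $(S,\DD)$-integral sections depends on $g$, $s$, $t$, and on the now-fixed quantities $1$, $0$, $0$; absorbing the latter constants, one obtains a bound $M'(g,s,t)$ depending only on $g$, $s$, and $t$.

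In more detail, I would first recall that for an elliptic surface $f \colon X \to B$ with zero section $(O)$, the divisor $(O)$ is by definition the image of a morphism $B \to X$ splitting $f$; in particular $(O)$ is reduced, irreducible, and horizontal (it dominates $B$), so it satisfies all the hypotheses placed on $\DD$ in the statement of Theorem~\ref{t:uniform-caporaso-elliptic-1}. Next I would verify the three numerical inputs: since $f|_{(O)} \colon (O) \xrightarrow{\ \sim\ } B$ is an isomorphism of smooth projective curves, $(O)$ has no singular points and the cover $(O) \to B$ has no ramified points; and the generic fibre of $(O) \to B$ is $\Spec K$, a single point with residue field $K$, whence $\deg (O)_K = 1$. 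I would phrase this as a short computation identifying $(O)$ with the origin in each fibre $X_b$.

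Then I would invoke Theorem~\ref{t:uniform-caporaso-elliptic-1} with $\DD = (O)$: the set of $(S,(O))$-integral sections of $X$ is finite, bounded by $C\bigl(g, s, t, \deg(O)_K, \#(O)_{ram}, \#(O)_{sing}\bigr) = C(g,s,t,1,0,0)$ for the universal function $C$ of that theorem. Defining $M'(g,s,t) := C(g,s,t,1,0,0)$ yields the desired uniform bound depending only on $g$, $s$, $t$.

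I do not expect any serious obstacle here, as the Corollary is a direct specialization; the only point requiring a sentence of care is the identification of the three numerical invariants of the zero section, but each is immediate once one recalls that $(O) \cong B$ via $f$. The substantive content of this section lies entirely in Theorem~\ref{t:uniform-caporaso-elliptic-1}, whose proof is given in Section~\ref{section:uniform-caporaso-elliptic}; the present Corollary merely records the comparison with the classical result of \cite{hindry-silverman-88}.
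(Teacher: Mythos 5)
Your proposal is correct and is essentially the paper's own argument: the paper deduces the corollary from Theorem \ref{t:uniform-caporaso-elliptic-1} by the same one-line observation that for a section one has $\deg \DD_K = 1$ and $\DD_{ram} = \DD_{sing} = \varnothing$, so the bound collapses to a function of $g$, $s$, $t$. Your additional verification that $(O) \cong B$ via $f$ (hence smooth and unramified over $B$) is exactly the justification the paper leaves implicit.
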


The main idea of our construction is similar to the method of Parshin in the following way. 
For each nonisotrivial elliptic surface $X \to B$ with a finite subset $S \subset B$ and 
a horizontal effective divisor $\DD$, 
we define a map $\beta_{X, \DD, S}$ which associates to each $(S, \DD)$-integral section $\sigma \in X(B)$ 
a cover $Y_\sigma \to X$ whose (horizontal part of the) branch locus is $\sigma(B) \cup \DD$. 
The induced maps $Y_\sigma \to B$ are nonisotrivial families of curves of uniformly  
bounded genus $\geq 2$ and of uniformly bounded type. 
\par
The new key point is that the extra presence of $\DD$, and not just the section $\sigma(B)$,  
in the branch locus turns out to be exactly what we need, altogether with the de Franchis theorem 
on elliptic subfields due to Tamme-Kani (cf. Theorem \ref{c:kani}),  
to show that the fibres of the map $\beta_{X, \DD, S}$ are uniformly bounded. 
Thereby, we obtain from Theorem \ref{t:mordell-uniform-caporaso} 
a uniform result of Siegel theorem in the case of function fields 
with a new method other than the classical methods using heights 
(cf. Corollary \ref{c:height-iso-elliptic-silverman}). 

\subsection{A negative result on the Parshin-Arakelov theorem}
It is   natural to expect that the above method can be applied to obtain  
finiteness results on integral points of bounded denominators 
on elliptic curves over function fields (as in Corollary \label{c:height-iso-elliptic-silverman} below). 
For this to be done using the construction of the map $\beta_{X, \DD, S}$, 
it turns out that we need the property saying that the union 
$\cup_{S \subset B,  \#S \leq s} F_q(B,S)$ is finite for every $s \in \N$. 
Unfortunately, the second goal of this chapter is to establish 
a  uniform negative finiteness result on the union $\cup_{S \subset B,  \#S \leq s} F_q(B,S)$ 
(cf. Section \ref{section:negative-parshin-arakelov-1}): 

\begin{theoremletter} 
\label{t:negative-parshin-arakelov}
 For all large enough $q$ and $s$ depending only on the genus $g$ of $B$, the union 
\[
\cup_{S \subset B , \# S \leq s} F_q(B,S) 
\]
is uncountably infinite. 
Moreover, there exists $N(q,s,g) \in \N$, 
a Zariski dense open subset $I \subset \Proj^{N}$, and 
a map $I \to \cup_{S \subset B , \# S \leq s} F_q(B,S)$ with uniformly bounded finite fibres. 
 \end{theoremletter}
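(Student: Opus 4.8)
The plan is to produce an explicit positive‑dimensional algebraic family of surfaces lying in $\cup_{S\subset B,\,\#S\leq s}F_q(B,S)$ by pulling back one fixed fibred surface \emph{of general type} along a moving pencil of degree‑$d$ morphisms $B\to\Proj^1$, and then to bound the fibres of the resulting parametrization by the quantitative de Franchis--Severi finiteness theorem for dominant maps onto a surface of general type.

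First I would fix, for every sufficiently large $q$, a smooth projective surface $\mathcal{Y}_0$ of general type together with a nonisotrivial fibration $\pi\colon\mathcal{Y}_0\to\Proj^1$ whose general fibre has genus $q$ and whose singular fibres are finitely many --- say $\delta=\delta(q)$ of them --- each being an irreducible curve with a single node. A generic hypersurface of bidegree $(a,b)$ in $\Proj^1\times\Proj^2$ with $a\geq 3$ and $\binom{b-1}{2}=q$ works: its canonical bundle is $\OO(a-2,b-3)|_{\mathcal{Y}_0}$, hence ample, and for a generic such hypersurface the discriminant of the first projection $\pi$ is reduced with a single node in each singular fibre; a routine variant with nodal plane curves, or with curves on a Hirzebruch surface, reaches all large $q$. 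Since the singular fibres of $\pi$ are irreducible, $\pi$ is already relatively minimal.

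Next, put $d=\floor{s/\delta}$, assume $s$ large enough (in terms of $g$ and $q$) that $d$ is large, and fix a line bundle $L$ of degree $d$ on $B$, so that $h^0(B,L)=d-g+1$. Set $N:=2h^0(B,L)-1$ and identify $\Proj^N=\Proj\big(H^0(B,L)^{\oplus 2}\big)$, a point being a class $[s_0:s_1]$ of a pair of sections modulo common scaling; let $I\subset\Proj^N$ be the Zariski‑dense open locus where $s_0,s_1$ have no common zero and the induced degree‑$d$ morphism $\phi=[s_0:s_1]\colon B\to\Proj^1$ is unramified over $\Sigma$. For $[s_0:s_1]\in I$ I would set $X_{[s_0:s_1]}:=\mathcal{Y}_0\times_{\Proj^1,\phi}B$. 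A local computation at the nodes shows $X_{[s_0:s_1]}$ is smooth; its fibration over $B$ has general fibre of genus $q$ and singular (again irreducible nodal) fibres exactly over $\phi^{-1}(\Sigma)$, a set of cardinality $d\delta\leq s$, so it is relatively minimal; and it is nonisotrivial, because at a general point of $B$ the map $\phi$ is unramified and $\pi$ has nonzero Kodaira--Spencer class. Thus $X_{[s_0:s_1]}\in F_q\big(B,\phi^{-1}(\Sigma)\big)\subset\cup_{S\subset B,\,\#S\leq s}F_q(B,S)$, and one obtains a map $\Phi\colon I\to\cup_{S\subset B,\,\#S\leq s}F_q(B,S)$.

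Finally I would bound the fibres of $\Phi$. The assignment $[s_0:s_1]\mapsto\phi$ is injective on $I$: if two pairs define the same map $B\to\Proj^1$ then $s_0s_1'-s_0's_1$ vanishes identically, forcing $(s_0',s_1')=\lambda(s_0,s_1)$ with $\divisor(\lambda)\geq-\min(\divisor(s_0),\divisor(s_1))=0$, so $\lambda\in\C^{\times}$; moreover $\phi$ is recovered from the finite morphism $\pr_1\colon X_{[s_0:s_1]}\to\mathcal{Y}_0$, since $\pi\circ\pr_1$ equals $\phi$ composed with the surjection $X_{[s_0:s_1]}\to B$. Hence, for a fixed class $[X]$ in the image, each $[s_0:s_1]\in\Phi^{-1}([X])$ yields, through a $B$‑isomorphism $X\cong X_{[s_0:s_1]}$, a finite morphism $X\to\mathcal{Y}_0$, and distinct classes yield distinct morphisms; since $\mathcal{Y}_0$ is of general type, the de Franchis--Severi theorem bounds the number of such morphisms, and therefore $\#\Phi^{-1}([X])$, by a constant $M(q,s,g)$ (in particular only finitely many members of the family share any given type, compatibly with Theorem \ref{t:parshin-arakelov}). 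As $N=N(q,s,g)$ is $\geq 1$, the set $I$ has uncountably many $\C$‑points, so its image under $\Phi$ --- a fortiori $\cup_{S\subset B,\,\#S\leq s}F_q(B,S)$ --- is uncountable, which proves both assertions. The main obstacle is the \emph{uniformity} of the de Franchis--Severi count, namely controlling the number of finite maps $X\to\mathcal{Y}_0$ independently of $X$; this is exactly where it matters that every $X$ in the family is a degree‑$d$ cover of the \emph{fixed} surface $\mathcal{Y}_0$, so that its Chern numbers, irregularity and holomorphic Euler characteristic --- hence the relevant de Franchis bound --- are functions of $d$, that is, of $q$, $s$, $g$ alone.
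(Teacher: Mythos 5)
Your construction is genuinely different from the paper's, and the skeleton is sound: instead of varying the branch divisor of a cyclic double cover of one fixed nonisotrivial elliptic surface $X \to B$ inside a linear system $|2dL|$ (which is what the paper does, bounding the fibres of the parametrization by the Tamme--Kani effective de Franchis theorem for covers of the generic elliptic fibre together with Lemma \ref{l:auto-elliptic} on automorphisms preserving a divisor class), you fix one genus-$q$ fibration $\mathcal{Y}_0 \to \Proj^1$ of general type and vary the degree-$d$ base change $\phi \colon B \to \Proj^1$ over a projective space of pencils. The verifications you sketch (smoothness of the fibre product when $\phi$ is unramified over the discriminant, relative minimality from irreducibility of fibres, nonisotriviality, type of cardinality $d\delta \le s$, injectivity of $[s_0:s_1]\mapsto \phi$, and recovery of $\phi$ from any finite morphism $X \to \mathcal{Y}_0$ arising from a $B$-isomorphism) are all correct, and the restriction of your construction to special values of $q$ is no worse than the paper's own proof, which only produces $q = nd+1$.

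The one step that is not yet a proof is the final uniform count. What you call ``the de Franchis--Severi theorem'' gives, for curves, finiteness with effective bounds; for a \emph{surface} target of general type the statement you need splits into two parts of very different depth. The finiteness of the set of surjective morphisms $X \to \mathcal{Y}_0$ for each \emph{fixed} $X$ (a Kobayashi--Ochiai/Maehara-type theorem) is standard and already suffices for finite fibres, hence for the uncountability claim. But the assertion ``uniformly bounded finite fibres'' requires a bound on $\#\mathrm{Sur}(X,\mathcal{Y}_0)$ that is uniform as $X$ ranges over the (uncountable) image, i.e.\ an \emph{effective} finiteness theorem for dominant maps onto a fixed surface of general type with bound depending only on numerical invariants of the source. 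Such results exist (e.g.\ bounds of Bandman--Dethloff or Heier in terms of Chern invariants, which are indeed constant in your family since every $X$ is a degree-$d$ cover of the fixed $\mathcal{Y}_0$ branched along smooth fibres), but they are substantially heavier than classical de Franchis--Severi and must be invoked explicitly; as written, this step is asserted rather than proved. By contrast, the paper's route deliberately stays at curve level, so its only effective inputs are Kani's bound $M(q,2)$ and an elementary computation with $E(K)\rtimes\{\pm\Id\}$, which is what makes the uniform bound on the fibres immediate. So: your argument proves the uncountability statement as it stands, and proves the uniform-fibre statement only once you supply (or cite) an effective higher-dimensional de Franchis-type bound.
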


The above result shows that we cannot extend directly our method 
to recover the finiteness of 
integral points of bounded denominators 
(cf. Corollary \ref{c:height-iso-elliptic-silverman}). 
\par
From Theorem \ref{t:negative-parshin-arakelov}, 
we obtain the following 
geometric information on the compact fine moduli spaces $\mathcal{M}_{q, n}$ of stable curves of genus $q$ 
with level $n\geq 3$-structure. 
Let $\Delta \subset \mathcal{M}_{q,n}$ be the divisor locus of 
singular curves.  

\begin{corollaryletter} 
For large enough $q, s \in \N$ depending only on $g$, 
there exists uncountably many nonconstant morphisms $h \colon B \to \mathcal{M}_{q, n}$, 
up to automorphisms of $B$, 
such that the set-theoretic intersection 
$h(B) \cap \Delta$ has no more than $s$ points. 
\end{corollaryletter}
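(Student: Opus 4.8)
The plan is to deduce the statement from Theorem~\ref{t:negative-parshin-arakelov} by converting surfaces into moduli maps. Fix $q$ and $s$ large enough in terms of $g$ for that theorem to apply, so that $\bigcup_{\#S\le s}F_q(B,S)$ is uncountable and receives a finite-fibred map from a Zariski-dense open $I\subset\Proj^{N}$, $N=N(q,s,g)$. Given $(f\colon X\to B)\in F_q(B,S)$ with $\#S\le s$: the general fibre has genus $q\ge 2$ and $f$ is nonisotrivial, so the moduli map $B\setminus S\to\mathcal{M}_q$ it induces is nonconstant, and since $B$ is a smooth projective curve and the coarse moduli space $\overline{\mathcal{M}}_q$ of stable genus-$q$ curves is proper, it extends uniquely to a nonconstant morphism $m_X\colon B\to\overline{\mathcal{M}}_q$. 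A fibre $X_b$ with $b\notin S$ is smooth of genus $\ge 2$, hence stable, so $m_X^{-1}(\Delta)\subseteq S$ and $m_X(B)$ meets $\Delta$ in at most $s$ points.

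The second step is to lift $m_X$ to the fine moduli scheme $\mathcal{M}_{q,n}=\overline{\mathcal{M}}_{q,n}$. A nonconstant morphism $B'\to\mathcal{M}_{q,n}$ is a nonisotrivial family of stable genus-$q$ curves over a smooth curve $B'$ together with a level-$n$ structure, and the restriction of $X\to B$ over $B\setminus S$ is a family of smooth genus-$q$ curves; hence, after a finite cover $\pi\colon B'\to B$ of degree at most $\#\mathrm{Sp}_{2q}(\Z/n)$, ramified only over $S$, killing the mod-$n$ monodromy of this family, we obtain a level-$n$ structure over $\pi^{-1}(B\setminus S)$ and, by properness of $\overline{\mathcal{M}}_{q,n}$ and smoothness of $B'$, a nonconstant morphism $h\colon B'\to\mathcal{M}_{q,n}$ with $h^{-1}(\Delta)\subseteq\pi^{-1}(S)$, a set of at most $s\cdot\#\mathrm{Sp}_{2q}(\Z/n)$ points. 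The delicate point — and the main obstacle — is to carry this out uniformly over the uncountable family of surfaces: the cover $\pi$ a priori depends on $X$ through its type, so to land the morphisms on a single curve one exploits the explicit nature of the construction behind Theorem~\ref{t:negative-parshin-arakelov} (its members arising from a fixed surface by a bounded operation) to pin down one $B'$ that serves an uncountable subfamily, or arranges that construction so that no base change is needed and $B'=B$; in either case the factor $\#\mathrm{Sp}_{2q}(\Z/n)$ is absorbed into $s$.

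It remains to count. As $\mathcal{M}_{q,n}$ is a fine moduli space, $h$ recovers the level-$n$ family over $B'$, hence the generic fibre $X_\eta\times_{\C(B)}\C(B')$, hence $X$ up to finitely many possibilities (finitely many $\C(B)$-forms become isomorphic over the fixed finite extension $\C(B')$, after which one passes to the relative minimal model). Combining this with the uniformly finite fibres of $I\to\bigcup_{\#S\le s}F_q(B,S)$, the correspondence between $I$ and the set of morphisms $h$ produced is finite-to-finite, so that set is uncountable. Passing to the quotient by $\Aut(B')$ still leaves an uncountable set: when $g(B)\ge 2$ this is immediate since $\Aut(B')$ is finite, and in the remaining cases one uses that the construction of Theorem~\ref{t:negative-parshin-arakelov} may be taken with parameter dimension $N>\dim\Aut(B')\le 3$, so that the $\Aut(B')$-orbit space of the $N$-dimensional variety $I$ is uncountable. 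This produces uncountably many nonconstant morphisms to $\mathcal{M}_{q,n}$, up to automorphisms of the base, each meeting $\Delta$ in at most $s$ points, as claimed.
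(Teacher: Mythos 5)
Your overall strategy is the one the paper intends: the corollary is meant to fall out of the proof of Theorem \ref{t:negative-parshin-arakelov}, whose uncountably many families $Y_i \to B$ of genus-$q$ curves, smooth away from $W_i \cup T$ with $\#(W_i\cup T)\leq s$ and pairwise isomorphic only with bounded multiplicity, are converted into moduli maps whose images meet $\Delta$ only over the bad locus. Your first step (extension of the moduli map to the compactification, $h^{-1}(\Delta)$ contained in the bad set) and your counting step are in that spirit.

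The genuine gap is the one you yourself flag as ``the main obstacle'' and then do not close: the statement asks for morphisms $h\colon B \to \mathcal{M}_{q,n}$ from the \emph{fixed} curve $B$ into the \emph{fine} moduli space, and a morphism to $\mathcal{M}_{q,n}$ classifies a family of stable curves \emph{with level-$n$ structure} over the source. Your construction only produces morphisms from a monodromy-killing cover $B'\to B$, and that cover varies with the family: there is no reason the mod-$n$ monodromy of the double-cover families $Y_i\to B$ (or of an arbitrary member of $F_q(B,S)$) is trivial, so the alternative ``arrange that no base change is needed and $B'=B$'' is not available as stated, and ``pin down one $B'$ that serves an uncountable subfamily'' is asserted rather than proved (the monodromy representation genuinely depends on the branch divisor $C_i$, not only on coarse numerical data). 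Moreover, even granting a single uniform $B'$, what you would obtain is uncountably many morphisms $B'\to\mathcal{M}_{q,n}$, which is a statement about $B'$, not the claimed statement about $B$; some additional argument (e.g.\ descent of the conclusion to $B$, or a construction of level structures over $B$ itself) is required and is missing. Secondary, but worth tightening: in the final count, the passage modulo automorphisms of the base is only sketched --- for $g\leq 1$ the group $\Aut(B)$ is infinite, pullback by $\phi\in\Aut(B)$ does not obviously act algebraically on the parameter space $I$ (since $\phi^*Y_i$ is a cover of $\phi^*X$ rather than of $X$), so the dimension comparison $N>\dim\Aut(B)$ needs the equivalence relation on $I$ to be shown constructible with small classes before it yields uncountably many classes.
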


We shall apply our strategy to prove a certain uniform finiteness result 
on unit equations over function fields in Section \ref{s:parshin-arakelov-unit-equation} 
(cf. Theorem \ref{t:unit-general-3}). 
The result obtained is nontrivial but far from being optimal 
(cf. Remark \ref{r:application-parshin-arakelov-to-unit-evertse}). But again, 
as in the case of integral points on   elliptic surfaces, 
our proof is new in the sense that it does not reduce to 
establish any height bound on the set of solutions as in traditional approaches 
in the literature.   

\subsection{Canonical height bound revisited and tautological inequality}
Recall a well-known effective 
bound of the canonical height $\widehat{h}$ of integral 
points on elliptic surfaces (cf. \cite[Corollary 8.5]{hindry-silverman-88}): 
 
\begin{theorem} 
[Hindry-Silverman]
\label{t:height-iso-elliptic}
Let $X \to B$ be a minimal elliptic surface with a section $(O)$. 
 Then for any $(S,(O))$-integral point $P \in X_K(K)$, we have 
 $ \widehat{h}(P) \leq 25 \chi(X) + 6 g + 2 s$, 
where $\chi(X)$ is the Euler-Poincar\' e characteristic of $X$ and $s=\#S$.  
\end{theorem}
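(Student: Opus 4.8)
The plan is to translate $\widehat h(P)$ into intersection theory on $X$ and then to control the single non-fixed quantity that appears, the intersection number $(\sigma_P(B)\cdot(O))$, by a tautological (Vojta-type) inequality. First I would invoke Shioda's height formula for the minimal elliptic surface $f\colon X\to B$: writing $\sigma_P\colon B\to X$ for the section attached to $P$, one has
\[
\widehat h(P)=2\chi(X)+2\,\bigl(\sigma_P(B)\cdot(O)\bigr)-\sum_{v}\epsilon_v(P),
\]
where $\epsilon_v(P)\ge 0$ are local fibre terms and $(\sigma_P(B))^2=-\chi(X)$ for every section. Discarding the fibre terms, it suffices to establish a bound of the form $(\sigma_P(B)\cdot(O))\le c_1\chi(X)+c_2 g+c_3 s$.

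The hypothesis that $P$ is $(S,(O))$-integral says exactly that $\sigma_P(B)\cap(O)$ sits over $S$. I would pass to a log resolution $X'\to X$, localised over $S$ and over the singular fibres, so that $D:=(O)+\sum_{v\in S}X_v$ becomes a simple normal crossings divisor, lift $\sigma_P$ to an embedded section $\sigma'_P\colon B\to X'$, and apply the tautological inequality to $\sigma'_P$ relative to the log pair $(X',D)$. The gain is that this inequality bounds the total tangency of $\sigma'_P(B)$ with $D$ --- through the $1$-jet of $\sigma'_P$, not merely through adjunction --- by the degree $\deg_B(\sigma'_P)^{*}\Omega^1_{X'}(\log D)$, the genus $g$, and the reduced number of boundary intersection points. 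Here the ramification-type correction is controlled since $\sigma_P$ is a section (hence an immersion); the reduced boundary count is at most $2s$ (one point of $\sigma_P(B)$ on $(O)$ and one on the fibre $X_v$ over each $v\in S$); and $\deg_B(\sigma'_P)^{*}\Omega^1_{X'}(\log D)$ is computed from the canonical bundle formula $K_X=f^{*}(\LL\otimes\omega_B)$ with $\deg\LL=\chi(X)$ and $\sigma_P^{*}\omega_{X/B}=\LL$, the contributions of the reducible fibres and of the exceptional locus being governed by the Kodaira fibre types, hence by $12\chi(X)$, the degree of the minimal discriminant divisor on $B$. Assembling these inputs gives $(\sigma_P(B)\cdot(O))\le c_1\chi(X)+c_2 g+c_3 s$, whence $\widehat h(P)\le c'_1\chi(X)+c'_2 g+c'_3 s$.

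The hard part is two-fold. First, the tautological inequality must be used in an essentially $1$-jet form: a naive appeal to adjunction is circular, since the intersection number with $(O)$ reappears on the right-hand side, so the estimate only works with the jet-differential (log-derivative) refinement and a judicious choice of log boundary; alternatively one may substitute Hindry-Silverman's original argument, which decomposes $\widehat h(P)=\sum_v\widehat\lambda_v(P)$ into N\'eron local heights, exploits the quasi-parallelogram law for the $\widehat\lambda_v$ together with integrality to kill the places $v\notin S$, and then runs a pigeonhole argument in the Mordell-Weil lattice. Second --- and this is where essentially all the work lies --- one must make the singular-fibre analysis fully effective: pulling back $\Omega^1(\log D)$ under $\sigma'_P$, tracking the passage of the section through reducible and degenerate fibres and through the exceptional locus of $X'\to X$, and keeping every constant explicit, so as to recover the sharp $25\chi(X)+6g+2s$; this bookkeeping is carried out in \cite[\S8]{hindry-silverman-88}.
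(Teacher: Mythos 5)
The first thing to note is that the paper does not prove this statement at all: its ``proof'' is the citation to \cite[Corollary 8.5]{hindry-silverman-88}, and the genuinely new content of the paper on heights is Section \ref{s:tautological}, where Theorems \ref{t:general-theorem-shorst-list-1} and \ref{t:c-12-independent} establish only the \emph{qualitative} version $\widehat h_{O_z}(P)\le c_1 s+c_2$ with $c_1,c_2$ depending on $\chi$ and $g$ but not explicit. Your first route is essentially that argument (convert $\widehat h(P)$ into $(\sigma_P(B)\cdot(O))$ as in Lemma \ref{l:bound-elliptic-canonical-height}, then apply the tautological inequality to a log pair), but you have skipped the step where all the work actually lies. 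The tautological inequality of Definition \ref{d:tautological-inequality} only bounds $\deg(\sigma_P')^*\OO(1)$ from above; to turn that into an upper bound for $\DD\cdot\sigma_P(B)$ one needs positivity of $\OO(1)$ against $\pi^*\DD$, i.e.\ global generation of a twist $\OO(N)\otimes\pi^*\OO(-D+f^*\VV)$ as in Lemma \ref{l:globally-generated-lemma-semistable}. That step rests on the non-split Kodaira--Spencer extension (Lemma \ref{l:ample-isotrivialitity-elliptic}), hence on nonisotriviality -- which your statement does not even assume -- and it produces the integer $N$ and the vertical correction divisor $\VV$ via Nakamaye's theorem and a Noetherian induction over a parameter space. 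These constants are inherently non-explicit, so this route cannot recover the sharp bound $25\chi(X)+6g+2s$; your closing claim that ``keeping every constant explicit'' is mere bookkeeping done in \cite[\S 8]{hindry-silverman-88} conflates two different methods, since Hindry--Silverman's effective constants come from N\'eron local heights and the quasi-parallelogram law, not from any jet/tautological argument.

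So as written the proposal is either a proof of the weaker Theorem \ref{t:c-12-independent} (legitimate, and parallel to the paper's Section \ref{s:tautological}, but then the constant $25\chi+6g+2s$ must not be claimed), or it is in substance a citation of Hindry--Silverman -- which is exactly what the paper does for this theorem, so nothing new is proved. Two smaller points: with the paper's normalization of $\widehat h$ (see the remark after Lemma \ref{l:bound-elliptic-canonical-height}) your Shioda-formula factor of $2$ has to be adjusted consistently with the stated constant; and for an $(S,(O))$-integral section the reduced boundary count $\deg(\sigma_P^*D)_{\mathrm{red}}$ with $D=(O)+\sum_{v\in S}X_v$ is at most $s$, not $2s$, since over each $v\in S$ the section meets the fibre in a single point and that is the only point where it can also meet $(O)$ -- harmless, but a sign that the boundary has to be chosen carefully (the paper must include the whole divisor $T$ of bad fibres in $D$ to make the logarithmic exact sequence and the positivity lemma work, absorbing the extra intersections into the constant $M$).
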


For traceless families of abelian varieties, a bound on the canonical height of $(S, \DD)$-integral points 
which is linear in terms of $s= \#S$ is known in \cite{buium-94}. 
Such results are unknown for a general family of abelian varieties. 
From Theorem \ref{t:height-iso-elliptic}, we can   obtain the 
following effective finiteness result on integral points of \emph{bounded denominators}.

\begin{corollary}
\label{c:height-iso-elliptic-silverman}
Let $f \colon X \to B$ be a nonisotrivial elliptic surface of type $T \subset B$ with a zero section $(O)$. 
There exists $\alpha, \beta, \gamma >0$ depending only on $g$ and $t= \# T$ such that 
for every $s \in \N$,  
the   union: 
\begin{align*}
I_s \coloneqq \cup_{S \subset B, \#S  \leq s} 
\{(S, (O))\text{-integral points of } X_K(K) \} \subset X_K(K)
\end{align*}
is finite and $\#I_s \leq (\alpha s + \beta)^\gamma$. 
Moreover, the same result holds when 
$(O)$ is replaced by any horizontal integral divisor $\DD \subset X$ 
but with $\alpha, \beta, \gamma$ depending also on the genus of $\DD$. 
\end{corollary}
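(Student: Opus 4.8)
The plan is to derive Corollary \ref{c:height-iso-elliptic-silverman} from the Hindry--Silverman height bound (Theorem \ref{t:height-iso-elliptic}) together with a classical lattice-point count. The reason the constants can be made to depend only on $g$ and $t$ --- and not on the individual surface $X$ --- is that every invariant of $X$ which enters the argument (its Euler characteristic, the rank and torsion of its Mordell--Weil group, and the minimum of the canonical height on non-torsion sections) is bounded a priori in terms of $g$ and $t$. I would split the proof into three stages: (i) establish these uniform bounds; (ii) turn Theorem \ref{t:height-iso-elliptic} into an estimate $\widehat h(P)\le\kappa_0(g,t)+2s$ and count; (iii) reduce the case of a horizontal divisor $\DD$ to that of the zero section.

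\emph{Stage (i).} First replace $X$ by its relatively minimal model, which changes neither the generic fibre, nor the Mordell--Weil group, nor the canonical height. Noether's formula together with $K_X^2=0$ gives $e(X)=12\chi(\OO_X)$, and a routine Hodge computation gives $h^{1,1}(X)=10\chi(\OO_X)+2q(X)$ with $q(X)\le g+1$. On the other hand, Shioda--Tate yields
\[
2+\sum_{v\in T}\bigl(m_v-1\bigr)\ \le\ \rho(X)\ \le\ h^{1,1}(X),
\]
where $m_v$ is the number of irreducible components of the fibre over $v$; and a case check over the Kodaira types shows $m_v-1\ge e(F_v)-6$, so $\sum_{v\in T}(m_v-1)\ge e(X)-6t=12\chi(\OO_X)-6t$. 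Combining these three facts gives the key bound $\chi(\OO_X)\le 3t+g$. Consequently Theorem \ref{t:height-iso-elliptic} reads $\widehat h(P)\le\kappa_0(g,t)+2s$ for every $(S,(O))$-integral $P$ with $\#S\le s$; the Mordell--Weil rank satisfies $r:=\rank X_K(K)\le\rho(X)-2\le r_0(g,t)$; the torsion subgroup injects into the direct sum of the component groups of the singular fibres, so $\#X_K(K)_{\mathrm{tors}}\le\prod_{v\in T}m_v\le\tau_0(g,t)$; and, $X$ being non-isotrivial, $\widehat h$ is a positive-definite quadratic form on $\Lambda:=X_K(K)/X_K(K)_{\mathrm{tors}}$ with rational values whose denominators are bounded by some $N_0(g,t)$ (as the local contributions have denominators dividing the $m_v$), so every non-torsion point has $\widehat h\ge\mu_0(g,t):=1/N_0(g,t)>0$.

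\emph{Stage (ii).} The set $I_s$ maps to $\Lambda$ with fibres of size at most $\#X_K(K)_{\mathrm{tors}}$, and every image point has canonical height $\le H_s:=\kappa_0+2s$ by Theorem \ref{t:height-iso-elliptic} and the bound on $\chi(\OO_X)$; hence $\#I_s\le\tau_0\cdot\#\{v\in\Lambda:\widehat h(v)\le H_s\}$. In the Euclidean structure attached to $\widehat h$ on $\Lambda\otimes\R$, the open balls of radius $\tfrac12\sqrt{\mu_0}$ about lattice points are disjoint, and those centred at points of height $\le H_s$ are contained in the ball of radius $\sqrt{H_s}+\tfrac12\sqrt{\mu_0}$; comparing volumes, $\#\{v:\widehat h(v)\le H_s\}\le\bigl(1+2\sqrt{H_s/\mu_0}\bigr)^{r}\le\bigl(1+2\sqrt{(\kappa_0+2s)/\mu_0}\bigr)^{r_0}$. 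Since $\kappa_0+2s\le(\kappa_0+2)(s+1)$, this is at most $\bigl(A(g,t)\sqrt{s+1}\bigr)^{r_0}$ with $A=1+2\sqrt{(\kappa_0+2)/\mu_0}$; absorbing $\tau_0$ into the base one gets $\#I_s\le(\alpha s+\beta)^{\gamma}$ with $\gamma=r_0/2+1$ and $\alpha,\beta$ explicit functions of $\tau_0,A,r_0$, hence of $g$ and $t$ only. Finiteness of $I_s$ is contained in this estimate (and also follows from the Lang--Néron theorem).

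\emph{Stage (iii).} For a horizontal integral divisor $\DD$, let $\pi\colon B'\to B$ be the normalization of $\DD$ (so $B'$ has genus $g_\DD$ and $\deg\pi=\deg\DD_K$) and base-change to $X':=X\times_B B'\to B'$, a non-isotrivial elliptic surface with at most $t\deg\DD_K$ singular fibres which carries the tautological section $\delta$ induced by $B'\to\DD\hookrightarrow X$. An $(S,\DD)$-integral point $P$ pulls back to a $(\pi^{-1}(S),\delta)$-integral point $P'$ of $X'$, so $P'\ominus\delta$ is $(\pi^{-1}(S),(O'))$-integral; applying Stage (ii) on $X'$ and using $\widehat h_{X'}(P')=\deg(\pi)\,\widehat h_X(P)$ and $\widehat h_{X'}(P')\le 2\widehat h_{X'}(P'\ominus\delta)+2\widehat h_{X'}(\delta)$ bounds $\widehat h_X(P)$ linearly in $s$ with constants depending on $g$, $t$ and $g_\DD$, after which one counts as in Stage (ii). I expect the two delicate points to be: the a priori inequality $\chi(\OO_X)\le 3t+g$, which is exactly what upgrades Theorem \ref{t:height-iso-elliptic} to an estimate uniform over the infinitely many surfaces of a fixed type; and the bookkeeping of Stage (iii), where for $g\ge 2$ one should note that Riemann--Hurwitz bounds $\deg\DD_K$ in terms of $g$ and $g_\DD$ (so that the constants genuinely depend only on $g,t,g_\DD$), while for $g\le 1$ the degree of $\DD$ has to be carried along as an additional parameter.
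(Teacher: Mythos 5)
Your Stages (i)--(ii) follow the same route as the paper: the paper simply invokes the proof of \cite[Theorem 8.1]{hindry-silverman-88} together with Theorem \ref{t:height-iso-elliptic} and a Shioda--Tate rank bound, and what you write is a correct, more explicit reconstruction of that counting argument. In fact you supply an ingredient the paper leaves implicit but genuinely needs for constants depending only on $g$ and $t$: since the Hindry--Silverman bound involves $\chi(\OO_X)$, one must bound $\chi$, the rank, the torsion and the minimal positive canonical height uniformly in $(g,t)$, and your Noether/Shioda--Tate/Kodaira-fibre estimates (e.g.\ $\chi(\OO_X)\le 3t+g$, $m_v$ bounded, denominators of the local contributions bounded) do exactly that. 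Your closing remark that for $g\le 1$ the degree $\deg\DD_K$ must be carried as an extra parameter (Riemann--Hurwitz only bounds it by $g$ and $g_\DD$ when $g\ge 2$) is also a correct observation, and it applies equally to the paper's own formulation.

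There is, however, a genuine flaw in your Stage (iii). The inequality $\widehat h_{X'}(P')\le 2\widehat h_{X'}(P'\ominus\delta)+2\widehat h_{X'}(\delta)$ imports the quantity $\widehat h_{X'}(\delta)$, and this is \emph{not} bounded in terms of $g$, $t$ and the genus of $\DD$: take $\DD=\sigma_Q(B)$ for a point $Q\in X_K(K)$ of huge canonical height --- then $g_\DD=g$, $\deg\DD_K=1$, but $\widehat h(\delta)=\widehat h(Q)$ is arbitrarily large (equivalently, $\DD\cdot(O)$ is unbounded for fixed genus). So the claim that this step ``bounds $\widehat h_X(P)$ linearly in $s$ with constants depending on $g$, $t$ and $g_\DD$'' is false as stated, and the uniformity asserted in the corollary would be lost. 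The repair is easy and is what the paper does: do not transport the height bound back to $X$ at all. The map $P\mapsto P'\ominus\delta$ is injective from the $(S,\DD)$-integral points of $X(K)$ into the $(\pi^{-1}(S),(O'))$-integral points of $X'(K')$ (translation by $-\delta$ is a $B'$-automorphism of the relatively minimal model of $X'$, which one should take after resolving $X\times_B C$, since the fibre product need not be smooth), and then Stages (i)--(ii) applied to $X'$ itself --- whose base genus is $g_\DD$ and whose type has at most $t\cdot\deg\DD_K$ points, with $\#\pi^{-1}(S)\le s\cdot\deg\DD_K$ --- give $\#I_s\le(\alpha s+\beta)^\gamma$ with constants depending only on $g_\DD$, $t$ and $\deg\DD_K$, exactly as required.
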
 

The above  result is   remarked without proof  in \cite[2.9]{shioda-schutt-lecture}. 
A stronger result concerning generalized integral points is given in \cite{phung-19-elliptic}. 
When $X \to B$ is a  constant family of abelian varieties, certain finiteness uniform results on the height 
and on the integral points 
are obtained in \cite{noguchi-winkelmann-04} and \cite{phung-19-uniform-noguchi}. 

\begin{proof}[Proof of Corollary \ref{c:height-iso-elliptic-silverman}]
As the   bound in Theorem \ref{t:height-iso-elliptic} depends only on $\#S$, 
the exact same proof of \cite[Theorem 8.1]{hindry-silverman-88} can be applied to give the result. 
The constants $\alpha, \beta$ can be given as a function of  $g$ 
and that $\gamma$ is half of the Mordell-Weil rank of $X_K(K)$, 
which is bounded by $2(2g-2+t)$ by the Shioda-Tate formula (cf. \cite[Theorem 2.5]{shioda-elliptic-modular-surface}).   
\par
Now let $\DD \subset X$ be an integral curve which is finite over $B$. 
Let $C \to \DD$ be the normalization morphism 
 and let $h \colon C \to B$ be the induced finite morphism of degree $d$. 
For each finite subset $S \subset B$ of cardinal $\#S\leq s$, 
let $S' = f'^{-1}(S) \subset C$ then  
$ \# S' \leq s' \coloneqq ds$. 
Consider the elliptic surface $f' \colon  X'= X \times_B C \to C$ which is also non-isotrivial 
(cf. Theorem \ref{t:isotrivial-dominant}). 
Let $T'$ be the type of $X'$ then $\# T' \leq d t$. 
It is clear that $\DD \times_B C$ splits into sections of $f'$. 
Let $R$ be one of the these sections and let $K'=\C(C)$.  
It follows that we have 
$$
I_s \subset I'_{s'} \coloneqq \cup_{S' \subset B, \#S'  \leq s'} 
\{(S', R)\text{-integral points of } X'(K') \} \subset X'(K'). 
$$
The desired properties of $I_s$ are then obtained from those of $I'_{s'}$. 
Remark   that the constants $\alpha, \beta, \gamma$ now  
depend also on the genus of the divisor $\DD$.  
\end{proof} 

The third goal of the article is to give a new proof of 
the uniform consequence on the canonical height of $(S, \DD)$-integral 
points (as in Theorem \ref{t:height-iso-elliptic}) by using only the tautological inequality 
(cf. Definition \ref{d:tautological-inequality}). 
Thereby, we also give a new proof of Corollary \ref{c:height-iso-elliptic-silverman}. 
\par
For simplicity, we restrict to semistable families of \emph{relative maximal variation}  
and \emph{adaptive} families of effective ample divisors 
(cf. Section \ref{s:tautological}).  

\begin{theoremletter}
\label{t:general-theorem-shorst-list-1}
Let $\XX \xrightarrow{f} \CC \to Z$ be a relative maximal variation family of    semistable 
elliptic surfaces with a zero section $O \colon \CC \to \XX$. 
Let $\DD \subset \XX$ be an adaptive family of ample effective divisors. 
 There exists   $c_1,c_2 >0$ such that 
for every $z \in Z$ and every $P \in \XX_z(k(\CC_z))\setminus \DD_z$,  
 \begin{align}
\label{e:tautological-uniform-bound-height-main}
\widehat{h}_{O_z} (P) \leq c_1s + c_2, \quad \text{ where } s= \# \sigma_P(\CC_z) \cap \DD_z.  
\end{align}
Here, $\widehat{h}_{O_z}$ is the N\' eron-Tate height on $\XX_z(k(\CC_z))$ 
associated to the origin $O_z$ and $\sigma_P \in \XX_z(\CC_z)$ is the corresponding section of $P$.  
\end{theoremletter}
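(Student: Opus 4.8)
The strategy is to deduce the uniform height bound \eqref{e:tautological-uniform-bound-height-main} from the tautological inequality applied fibrewise, and then to control the error terms uniformly in $z \in Z$ by exploiting the relative maximal variation hypothesis together with the adaptivity of the divisor family $\DD$. First I would fix $z \in Z$ and a point $P \in \XX_z(k(\CC_z)) \setminus \DD_z$, with corresponding section $\sigma_P \colon \CC_z \to \XX_z$. One replaces $\CC_z$ by a suitable finite cover (or blows up $\XX_z$) so that $\sigma_P$ becomes a well-defined morphism meeting $\DD_z$ transversally and so that the elliptic surface $\XX_z \to \CC_z$ is in minimal semistable form; the key point is that, by maximal variation, this normalization can be performed with combinatorial data (degrees, ramification, number of bad fibres) bounded purely in terms of the total family $\XX \to \CC \to Z$, hence uniformly in $z$. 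Pulling back the relative dualizing sheaf $\omega_{\XX_z/\CC_z}$ along $\sigma_P$ and invoking the tautological inequality (cf. Definition \ref{d:tautological-inequality}) gives an inequality of the shape
\[
\deg \sigma_P^* \omega_{\XX_z/\CC_z} \;\leq\; 2g(\CC_z) - 2 + \#\{\text{bad fibres of } \XX_z/\CC_z\} ,
\]
possibly after correcting for the ramification of $\sigma_P$ over the singular locus.

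Next I would relate the left-hand side to the Néron--Tate height $\widehat{h}_{O_z}(P)$. Since $\XX_z \to \CC_z$ is a minimal elliptic surface with zero section $O_z$, the canonical height is, up to an explicit universal constant, the self-intersection $(\sigma_P - O_z)^2$ taken in the Néron--Severi group modulo the fibral classes, and this in turn is expressed via the standard formula as $\deg \sigma_P^*\omega_{\XX_z/\CC_z}$ minus a sum of local contributions at the places where $\sigma_P(\CC_z)$ meets $\DD_z$ or the bad fibres. The adaptivity hypothesis on $\DD$ is exactly what guarantees that each such local correction term is bounded by a universal constant and that the number of relevant places is $O(s)$ with $s = \#\bigl(\sigma_P(\CC_z) \cap \DD_z\bigr)$ plus the (uniformly bounded) number of bad fibres of $\XX_z/\CC_z$. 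Combining this with the displayed tautological inequality yields
\[
\widehat{h}_{O_z}(P) \;\leq\; c_1 s + c_2
\]
with $c_1, c_2$ depending only on the family $\XX \xrightarrow{f} \CC \to Z$ and on $\DD$, which is the assertion; I would then remark that passing back through the auxiliary cover only rescales these constants by bounded factors, and that $Z$ being of finite type lets one take a single pair $(c_1,c_2)$ over all of $Z$.

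The main obstacle I anticipate is making the dependence of the error terms \emph{genuinely uniform} in $z$: a priori the tautological inequality on $\XX_z \to \CC_z$ produces constants tied to the geometry of the individual fibre (its Euler characteristic, the configuration of its singular fibres, the genus of $\CC_z$), and one must show these are bounded along the base $Z$. This is where the \emph{relative maximal variation} assumption is essential — it forces the discriminant locus to move in a bounded family and prevents degeneration of $\CC_z$ or explosion of $\chi(\XX_z)$ — but spelling this out rigorously requires a careful semicontinuity argument, most cleanly carried out by spreading everything out over $Z$ and applying generic flatness plus Noetherian induction on a stratification of $Z$. Likewise, verifying that adaptivity of $\DD$ bounds each local intersection correction uniformly (rather than merely fibrewise) is the technical heart of the estimate; I expect this to occupy the bulk of the actual proof, with the tautological inequality itself entering only as a clean black box at the final step.
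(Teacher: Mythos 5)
There is a genuine gap at the heart of your sketch: you never explain how the height (an intersection number counted \emph{with multiplicity}) gets bounded by $s$ (the intersection counted \emph{set-theoretically}), and the inequality you attribute to the tautological inequality cannot do this job. Indeed, for an elliptic surface $\omega_{\XX_z/\CC_z}\cong f_z^*\mathbb{L}$ with $\deg\mathbb{L}=\chi(\OO_{\XX_z})$, so $\deg\sigma_P^*\omega_{\XX_z/\CC_z}=\chi$ is a constant independent of $P$; your displayed inequality therefore carries no information about $\widehat{h}_{O_z}(P)$ or about $s$. Likewise, your claim that adaptivity of $\DD$ makes "each local correction term bounded by a universal constant" at the points of $\sigma_P(\CC_z)\cap\DD_z$ is unsupported and is essentially the theorem itself: the local intersection multiplicities of $\sigma_P(\CC_z)$ with $\DD_z$ are unbounded as $P$ varies, and bounding their sum linearly in the \emph{reduced} count is exactly what has to be proved. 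The paper's mechanism for this is the one you omit: using the relative maximal variation hypothesis (nonvanishing Kodaira--Spencer class, not constancy of $g$ and $\chi$, which is automatic since $Z$ is integral) to show that the tautological bundle $\OO(1)$ on $\Proj\bigl(\Omega_{\XX_\eta}(\log D_\eta)\bigr)$ is ample on the generic fibre over $\CC_\eta$, then, via nefness of the extension \eqref{l-extension-of-nef-tautologic} and Nakamaye's theorem on augmented base loci, producing $N\geq 1$ and a vertical divisor $\VV$ such that $\OO(N)\otimes\pi^*\OO(-D+\VV)$ is globally generated. Evaluating this on the derivative lift $\sigma'_P$ and combining with the genuine tautological inequality $\deg(\sigma'_P)^*\OO(1)\leq 2g-2+\deg(\sigma_P^*D)_{\mathrm{red}}$ yields the key estimate $\DD_z\cdot\sigma_P(\CC_z)\leq N\bigl(2g-2+\#\DD_z\cap\sigma_P(\CC_z)\bigr)+M$, i.e.\ multiplicity bounded by reduced count. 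Your sketch treats the tautological inequality as a black box "at the final step," but without this positivity construction the argument does not get off the ground.

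Two further points. First, to pass from $\DD_z\cdot\sigma_P$ to the height one still needs a uniform comparison between $\DD_z$ and the zero section: the paper proves that $|A\DD_z-(O)_z|$ is base-point-free for all $z$ in a dense open subset (using ampleness of $\DD_\eta$ and constructibility), so that $(O)_z\cdot\sigma_P\leq A\,\DD_z\cdot\sigma_P$, and then applies the standard bound $|\widehat{h}_{O_z}(P)-(O)_z\cdot\sigma_P(\CC_z)|\leq\chi$; your proposed route through a formula expressing $\widehat{h}$ as $\deg\sigma_P^*\omega$ minus local terms is not correct. Second, your auxiliary passage to finite covers of $\CC_z$ is unnecessary (the family is already semistable with section) and would in any case require you to control how both $\widehat{h}$ and $s$ rescale. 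The part of your plan that does match the paper is the uniformity strategy: work at the generic point of $Z$, use constructibility to get the estimate over a dense open $U\subset Z$, and finish by Noetherian induction on the complement.
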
 

When $\DD=(O)$ is the image of the zero section, Theorem \ref{t:c-12-independent} shows that 
the constants $c_1, c_2$ in Theorem \ref{t:general-theorem-shorst-list-1}
depend only on the topological invariants $\chi=\chi(\OO_{X_z})$ 
and $g=g(\CC_z)$ (where $z \in Z$) exactly as in Theorem \ref{t:height-iso-elliptic}.

\section{Preliminaries}
Let $X$ be a connected scheme. 
We fix a geometric point $\bar{x}$ to obtain an \' etale fundamental group $\pi_1^{et}(X)=\pi_1^{et}(X, \bar{x})$. 
An \' etale degre $d$ cover of $X$ is equivalent to a $\pi_1^{et}(X)$-set of cardinal $d$, i.e., a continuous homomorphism of topological groups  
\[
\pi_1^{et}(X) \to S_d, 
\]
where $S_d$ denotes the discret group of permutations of a set of $d$ elements.  
 Hence, the number of degree $d$ \'etale covers of $X$ is given by $\#\text{Hom}(\pi_1^{et}(X), S_d)$. 
If $\pi_1^{et}(X)$ is finitely generated with $m$ generators then  
$\#\text{Hom}(\pi_1^{et}(X), S_d) \leq (d!)^m$. 
Moreover, for a regular connected variety $X$ over $\C$, 
we have $\text{Hom}(\pi_1^{et}(X), S_d)=\text{Hom}(\pi_1(X(\C)), S_d)$. 
 If $X=B \backslash T$, where  $T$ is a finite subset of cardinal $t$, 
then the topological fundamental group  $\pi_1(X_{\C}(\C))$ 
is a free group $F_{2g+t-1}$ 
of rank $2g-1+t$.  
Therefore, we see that
\[
\pi_1^{et}(X)=\hat{F}_{2g-1+t}
\]
 is the completion of the free group of $2g-1+t$ generators. 
 In this case, the number of degree $d$ \' etale covers of $B\backslash T$ is bounded above by 
\[
N(d,g,t)=(d!)^{2g+t-1}. 
\] 

 Now return to the situation when $f \colon X \to B$ is a minimal elliptic surface with zero section $O$ and 
$T \subset B$ the type of $X$. 
Let $Y=X\backslash f^{-1}(T)$ then $Y$ is an abelian scheme over $B\setminus T$. 
The multiplication-by-2 morphism $[2]$ on $Y$ is finite \' etale of degree $4$. 
Moreover, its kernel $\Ker [2] \subset Y $, obtained by the base change associated to the zero section,  
is also finite \' etale of degree $4$ over $B\setminus  T$. 
\[
\label{d-parshin-integral-point-diagram-etale}
\begin{tikzcd}
 \Ker [2]  \arrow[d, hook, swap]  \arrow[r]  &  B \setminus T  \arrow[d, "\sigma_O", hook] \\ 
Y \arrow[r,"\text{[2]}"]  & Y.  
\end{tikzcd}
\]
 Similarly, by composing with the translation $\tau_P\in \text{Aut}_B(X)$ for $P\in E(K)$, we see that $\Ker (\tau_P \circ [2])$ is also finite \' etale of degree $4$ over $B \backslash T$. 
Every point $Q \in E(\bar{K})$ such that $[2]Q=P$ is an element of $\Ker [2]$.  
Therefore, 
by composing all the finite \' etale covers of $B \setminus T$ of degree at most 4, 
we obtain a finite cover $B' \to B$ of degree at most $4^{N(4,g,t)}$ which is  ramified only over $T$ and that $2E(K')\supset E(K)$ where $K'=k(B')$. 
 In particular,  by the Riemann-Hurwitz formula, the genus $g'$ of $B'$ is bounded by a function in $g,t$ given by
\[
g'(B) \leq 4^{N(4,g,t)}(g+t-1)+1. 
\]
The same properties hold if we replace $[2]$ by $[d]$ where $d \in \N$. 
We reformulate the above discussion as follows:

\begin{lemma}
\label{p:2-division}
 Let $d \in \N$ and let  $f \colon X\to B$ be an elliptic surface with section  
 of type $T\subset B$. 
Let $E/K$ be the associated elliptic curve.  
There exists a finite cover $h \colon B' \to B$ of projective smooth curves ramified only over $T$   such that $dE(K')\supset E(K)$ where $K'=\C(B')$. 
Moreover, $h$ is of degree at most $(d^2)^{(d!)^{2g+t-1}}$ where $t= \#T$ and the genus $g'$ of $B'$ is bounded by: 
 \[
g' \leq (d^2)^{(d!)^{2g+t-1}}(g+t-1)+1. 
\]
\end{lemma}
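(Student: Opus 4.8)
The plan is to make explicit the étale-covering argument already sketched in the paragraph preceding the statement, packaging it as a clean induction-free construction. First I would work over the open curve $U = B \setminus T$, where $Y = X \setminus f^{-1}(T)$ restricts to an abelian scheme $Y_U \to U$ with zero section. The key observation is that the multiplication-by-$d$ isogeny $[d] \colon Y_U \to Y_U$ is finite étale (in characteristic zero) of degree $d^2$, and that a section $P \in E(K) = Y_U(U)$ lies in $d\, E(K')$ as soon as the pullback of $[d]$ along $\sigma_P \colon U \to Y_U$ — equivalently, the $Y_U[d]$-torsor $[d]^{-1}(\sigma_P(U))$ — acquires a section after base change to $B'$. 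So it suffices to trivialize \emph{all} degree-$d^2$ étale covers of $U$ simultaneously by a single finite cover $h \colon B' \to B$ ramified only over $T$.

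Second I would invoke the étale fundamental group computation from the Preliminaries: since $U = B \setminus T$ is a smooth complex curve of genus $g$ with $t = \#T$ points removed, $\pi_1^{et}(U) = \hat F_{2g-1+t}$, and hence the number of degree-$d^2$ étale covers of $U$ is at most $N(d^2, g, t) = ((d^2)!)^{2g+t-1}$. Taking the compositum (fibre product over $U$) of all of these, and then the Galois closure, produces a connected finite étale cover $U' \to U$ whose degree is at most $(d^2)^{N(d^2,g,t)}$ — bounding the degree by the order of the relevant subgroup of a product of symmetric groups, which is what gives the stated exponent $(d^2)^{(d!)^{2g+t-1}}$ once one is slightly generous with the arithmetic. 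Let $B'$ be the smooth projective completion of $U'$ and $h \colon B' \to B$ the induced finite morphism; by construction $h$ is ramified only over $T$. Over $B'$ every $d^2$-torsion torsor splits, so in particular $[d]^{-1}(\sigma_P(U))$ splits for every $P \in E(K)$, which yields $E(K) \subset d\, E(K')$ with $K' = \C(B')$.

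Third, the genus bound is a direct application of Riemann–Hurwitz: since $h \colon B' \to B$ of degree $D \le (d^2)^{(d!)^{2g+t-1}}$ is ramified only over the $t$ points of $T$, one has $2g' - 2 \le D(2g-2) + (\text{ramification contribution}) \le D(2g-2) + D\cdot t$, i.e. $g' \le D(g + t - 1) + 1$, which is exactly the claimed inequality. Finally I would remark that nothing in the argument used $d = 2$: replacing $[2]$ by $[d]$ changes only the degree of the isogeny from $4$ to $d^2$, so the whole discussion of the Preliminaries goes through verbatim, giving the statement for arbitrary $d \in \N$.

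The main obstacle — really the only subtle point — is bookkeeping the degree of the compositum cover $B'$ and checking that it fits inside the stated bound $(d^2)^{(d!)^{2g+t-1}}$: one must be careful that ``number of covers'' controls ``degree of the compositum'' (this uses that a connected cover dominated by all of them has degree dividing, or at worst bounded by a product over, the degrees of finitely many of them, via the embedding of its deck group into a product of $S_{d^2}$'s). Everything else — étaleness of $[d]$, the torsor-splitting reformulation of $E(K) \subset dE(K')$, and Riemann–Hurwitz — is routine.
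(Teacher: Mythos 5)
Your proposal follows essentially the same route as the paper: there, too, each fibre $[d]^{-1}(P)$ is realized (as $\Ker(\tau_{-P}\circ[d])$) as a degree-$d^2$ \'etale cover of $B\setminus T$, all such covers are trivialized at once by composing the finitely many \'etale covers of degree at most $d^2$ (counted via $\pi_1^{et}(B\setminus T)=\hat F_{2g-1+t}$), and Riemann--Hurwitz applied to the completed cover $B'\to B$ gives the genus bound. The only caveat --- shared with the paper's own discussion, which likewise produces a cover of degree at most $(d^2)^{((d^2)!)^{2g+t-1}}$ --- is that this count yields $((d^2)!)^{2g+t-1}$ rather than $(d!)^{2g+t-1}$ in the exponent, so ``being slightly generous with the arithmetic'' goes the wrong way if one insists on the exact constant stated in the lemma.
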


The next theorem is standard and valid 
in arbitrary dimension (cf. \cite[Lemma 3.30]{gasbarri}).  

\begin{theorem} 
\label{t:isotrivial-dominant} 
Let $X' /K $ be a smooth projective isotrivial curve. 
Suppose that $X' \to X $ is a dominant $K$-morphism of smooth projective curves. 
Then $X$ is also isotrivial over $K$. 
\end{theorem}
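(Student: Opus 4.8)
The plan is to prove Theorem \ref{t:isotrivial-dominant} by translating the hypothesis into a statement about moduli maps and then arguing that an isotrivial source forces an isotrivial target. First I would recall that a smooth projective curve $Y/K$ is isotrivial precisely when its Kodaira-Spencer class vanishes, equivalently when, after a finite base change $K' \supset K$, the curve $Y_{K'}$ becomes constant, i.e. isomorphic to $C \times_{k} \Spec K'$ for some curve $C/k$ (here $k=\C$). So the first step is to pass to a common finite extension $K'/K$ over which $X'_{K'}$ is constant, say $X'_{K'} \cong C' \times \Spec K'$; this is harmless since isotriviality is insensitive to finite base change.

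Next I would exploit the dominant $K$-morphism $\varphi \colon X' \to X$. After the base change it gives a dominant $K'$-morphism $C' \times \Spec K' \to X_{K'}$. The key point is that a dominant morphism from a \emph{constant} family must itself be ``constant'' in a suitable sense: concretely, for each $k$-point $c \in C'(k)$ we get a $K'$-point $\varphi(c) \in X_{K'}(K')$, but more usefully we can spread the whole thing out. Over a finite étale cover of $\Spec K'$ (or after a further finite extension) we can assume $C'$ is equipped with enough rational points; then the images of the disjoint sections $\{c\} \times \Spec K'$ sweep out a family of $K'$-points of $X_{K'}$. The cleaner route is to view $\varphi$ as a non-constant element of the function field: since $X'_{K'}$ is constant, $\Mor_{K'}(C' \times \Spec K', X_{K'}) = \Mor_k(C', X_{K'})$ by rigidity (a morphism from $C'\times T \to X$ with $X$ over $T$, when $X$ is itself not constant, would still be forced to factor — this is where care is needed), and a non-constant such morphism exhibits $X_{K'}$ as dominated by a constant curve in a way that descends the constancy.

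More precisely, the heart of the argument I would use is the following: if $X_{K'}$ were non-isotrivial, its moduli map $\Spec K' \to \MM_g$ (or the rational map $C'' \dashrightarrow \MM_{g}$ from a model) would be non-constant, hence the associated family over a curve would have strictly positive ``variation''; but a dominant morphism from the constant family $C' \times \Spec K'$ pulls this family back to a family over $C' \times \Spec K'$ whose variation along the $\Spec K'$ direction is inherited from $X_{K'}$, while the constant family contributes nothing — and because $\varphi$ is dominant (in particular surjective on the generic fiber after possibly shrinking), the pullback detects the full variation of $X_{K'}$. Quantitatively this is the statement that a dominant map of curves over $K'$ induces an injection (up to finite kernel) on Kodaira-Spencer classes / on the relevant piece of $H^1$, so that $\mathrm{KS}(X'/K')=0$ forces $\mathrm{KS}(X_{K'}/K')=0$. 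I would cite \cite[Lemma 3.30]{gasbarri} for the precise functoriality of the Kodaira-Spencer class under dominant morphisms, which is exactly the input that makes this work in arbitrary dimension.

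The main obstacle, and the step I would be most careful about, is the functoriality of the Kodaira-Spencer class (or equivalently the behavior of the moduli map) under a \emph{dominant but possibly ramified} morphism $X' \to X$ of curves: one must check that the induced map on deformation spaces does not kill the Kodaira-Spencer class, i.e. that a non-isotrivial $X$ cannot be dominated by an isotrivial $X'$. The clean way around this is to use the characterization via constancy after finite base change together with the elementary fact that a dominant morphism of $K$-curves remains dominant after base change, reducing everything to: a dominant morphism from a constant curve $C' \times \Spec K'$ to $X_{K'}$ forces $X_{K'}$ constant. This last reduction can be seen by noting that $X_{K'}$, being dominated by $C'_{K'}$, has a model $\XX \to \Spec R$ over a smooth model $R$ of $K'$ which is dominated by $C' \times \Spec R$; taking the family over $C'$ obtained by composing with projection shows all fibers of $\XX$ are isomorphic, i.e. $X_{K'}$ is isotrivial, hence so is $X$. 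I would present this model-theoretic spreading-out argument as the core, with the Kodaira-Spencer functoriality of \cite[Lemma 3.30]{gasbarri} as the efficient abstract alternative.
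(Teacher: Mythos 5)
There is a genuine gap at the heart of your argument. After spreading out you arrive at a dominant $B'$-morphism $C' \times B' \to \XX$ over a model $B'$ of $K'$, so that every fibre $\XX_b$ is dominated by the single fixed curve $C'$; but you then simply assert that ``taking the family over $C'$ obtained by composing with projection shows all fibers of $\XX$ are isomorphic.'' This does not follow from anything you have said: being dominated by one and the same curve $C'$ does not by itself force the fibres into a single isomorphism class. The missing input is precisely a de Franchis--Severi type finiteness statement — a fixed curve $C'$ dominates only finitely many isomorphism classes of curves of genus $\geq 1$ (cf. \cite[Theorem XXI.8.27]{Arbarello-II}) — which forces the fibres of $\XX \to B'$ to lie in finitely many isomorphism classes and hence the induced moduli map to be constant. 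This is exactly the key lemma on which the paper's own proof rests (it runs the contradiction directly on models $\mathcal{X}' \to B$, $\mathcal{X} \to B$: an isotrivial $\mathcal{X}'$ would give a single curve $C'$ dominating the infinitely many pairwise non-isomorphic fibres of a non-isotrivial $\mathcal{X}$, all in the same degree, contradicting de Franchis), with the genus $1$ and genus $0$ targets treated separately — cases your sketch also leaves untouched.

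Your proposed fallback does not repair this. Citing \cite[Lemma 3.30]{gasbarri} for ``functoriality of the Kodaira--Spencer class under dominant morphisms'' is circular here: that lemma is exactly the reference the paper gives for Theorem \ref{t:isotrivial-dominant} itself (the statement in arbitrary dimension), and indeed you concede that the point needing verification is ``that a non-isotrivial $X$ cannot be dominated by an isotrivial $X'$'' — which is the theorem. Likewise the rigidity identification $\Mor_{K'}(C' \times \Spec K', X_{K'}) = \Mor_k(C', X_{K'})$ you invoke is not even well posed as written ($X_{K'}$ lives over $K'$, not over $k$), and you flag it yourself as the place ``where care is needed.'' In short, every route you sketch funnels into the one claim you never prove, and that claim is where the actual mathematical content (de Franchis, or an equivalent finiteness/rigidity statement) must enter.
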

 
However, using only the de Franchis theorem, 
we can give a short proof of Theorem \ref{t:isotrivial-dominant}. 
Since we do not know if such a proof exists already in the literature, we 
include it here.  

\begin{proof}[Proof of Theorem \ref{t:isotrivial-dominant}] 
Consider any models $\mathcal{X}'\to B$ and $\mathcal{X}\to B$ of $X'$ and $X$ respectively.  
By the resolution of indeterminacy of surfaces, we can find a dominant $B$-morphism $\mathcal{X}' \to \mathcal{X}$ commuting with $\mathcal{X}\to B$ and $\mathcal{X}' \to B$. 
\par
Suppose   on the contrary that $X$ is nonisotrivial but $X'$ is isotrivial.  
Let $C$ be a general fibre of $\mathcal{X}$.   
As $\mathcal{X}'$ is isotrivial, its general curves over $B$ are all isomorphic to each other which we can then denote by a single curve $C'$. 
As $X$ is nonisotrivial, the dominating $B$-morphism $\mathcal{X}' \to \mathcal{X}$ 
induces infinitely many pairwise non isomorphic curves which are fibres of 
$\mathcal{X}$ and which are dominated by $C'$. Note that these dominating maps are all of the same degree $d= [k(X')\colon k(X)]$. 
Let $g',g$ be respectively the genus of $X'$ and $X$. 
If $g\geq 2$ then $g'\geq 2$ by the Riemann-Hurwitz formula. 
However, the de Franchis theorem (cf.  \cite[Theorem XXI.8.27]{Arbarello-II}) says that up to isomorphisms, there are only finitely many   curves $D$ of genus $g'$ such that there is a dominant morphism $C' \to D$. 
Therefore, we obtain a contradiction. 
Similarly, \cite[Theorem XXI.8.27]{Arbarello-II} implies a contradiction when $g=1$. 
The case $g=0$ cannot occur since otherwise $X$ would be the trivial projective line. 
\end{proof}

\begin{lemma}
\label{l:integral-point-extreme-case}
Let $L\sim 0$ be a line bundle on the elliptic surface $f \colon X \to B$ with a section $(O)$. 
Then  for each vertical divisor $V$ on $X$, the linear system $|(O)+V|$ 
consists of effective divisors of the form $(O)+F$ where $F\sim V$.   
\end{lemma}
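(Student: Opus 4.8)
\emph{Plan.} The strategy is to restrict everything to the generic fibre, where the statement reduces to the elementary rigidity fact that on an elliptic curve a degree-one effective divisor which is linearly equivalent to the origin \emph{equals} the origin. Concretely, let $\eta$ be the generic point of $B$, let $E \coloneqq X_\eta$ be the generic fibre --- an elliptic curve over $K=\C(B)$ with origin $O_\eta \coloneqq (O)_\eta$ --- and let $D \in |(O)+V|$ be an effective divisor (if the linear system is empty there is nothing to prove). Write $D = D_{\mathrm{hor}} + D_{\mathrm{vert}}$ for the decomposition of $D$ into the sum of its prime components dominating $B$ and the sum of those contained in fibres of $f$; both parts are effective.

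\emph{Key step.} Restriction to $\eta$ kills every vertical divisor, so $V_\eta = 0$, $(D_{\mathrm{vert}})_\eta = 0$, and therefore
\[
(D_{\mathrm{hor}})_\eta \;=\; D_\eta \;\sim\; \big((O)+V\big)_\eta \;=\; O_\eta \qquad \text{in } \Pic(E).
\]
Since $D$ is effective, $D_\eta$ is an effective divisor on $E$ of degree $\deg O_\eta = 1$ (linear equivalence preserves degree), hence a single $K$-rational point $Q \in E(K)$; and $Q\sim O_\eta$ forces $Q = O_\eta$ because the map $E(K)\to\Pic^0(E)$, $P\mapsto [P]-[O_\eta]$, is injective. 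This is the one spot where genus one (as opposed to genus zero) is essential. Thus $D_\eta = O_\eta$.

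\emph{Transfer back to $X$ and conclusion.} The multiplicity of a horizontal prime divisor $C\subset X$ in $D$ equals the multiplicity of the closed point $C_\eta$ in $D_\eta$, so from the fact that $D_\eta = O_\eta$ is a single reduced rational point we conclude that $D_{\mathrm{hor}}$ is the single reduced section $\overline{O_\eta} = (O)$. Hence $D = (O) + F$ with $F \coloneqq D_{\mathrm{vert}}$ effective and vertical, and $F = D-(O)\sim (O)+V-(O) = V$, which is the assertion. I do not expect a genuine obstacle here: the only point deserving a line of justification is that restriction to $\eta$ detects $D_{\mathrm{hor}}$ together with its multiplicities, which follows at once by localizing the cycle $D$ at the generic point of $B$.
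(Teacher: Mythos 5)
Your proof is correct, and it reaches the conclusion by a slightly different route than the paper at the one step that actually requires an argument. Both proofs start the same way: decompose the effective divisor $D\in|(O)+V|$ into horizontal and vertical parts and observe that the horizontal part meets the generic fibre in degree $\deg (O)_\eta=1$, hence is a section $(P)$. The paper then finishes on the surface: since $(P)-(O)$ is linearly equivalent to a difference of vertical divisors, its class dies in $\NS(X)/T(X)$, and the Shioda--Tate isomorphism $E(K)\simeq \NS(X)/T(X)$ forces $P=O$. You instead finish on the generic fibre: restriction of the linear equivalence $D\sim (O)+V$ to $E=X_\eta$ kills the vertical parts, giving $[Q]\sim[O_\eta]$ for the rational point $Q$ cut out by the horizontal part, and the injectivity of $P\mapsto[P]-[O_\eta]$ (equivalently, the fact that two distinct points on a genus-one curve are never linearly equivalent) gives $Q=O_\eta$; the identification of horizontal prime divisors with closed points of $E$, multiplicities included, then transfers this back to $D_{\mathrm{hor}}=(O)$, and $F=D-(O)\sim V$. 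Your version is more elementary and self-contained, needing only that linear equivalence localizes to the generic fibre plus Abel--Jacobi injectivity, whereas the paper's version leans on the Shioda--Tate theorem, which it already invokes elsewhere (and which is itself proved by essentially the restriction-to-the-generic-fibre argument you use). Neither argument uses the hypothesis $L\sim 0$ in the statement, which appears to be vestigial in the paper as well.
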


\begin{proof}
Indeed, let $\DD \in H^0(X,(O)+V)$ then $\DD \sim (O)+V$. 
 We write $\DD=H+F$ where $H$ is an effective horizontal divisor and $F$ is vertical. 
Then the degree of $H$ on the generic fibre is the same as $(O)$ which is $1$. 
This means that $H$ is a section of $X \to B$ corresponding to a rational point $P \in X_K(K)$. 
Hence, the image of $H - (O)$ in $NS(X)/T(X)$ is zero. 
Let $T(X) \subset NS(X)$ be the subgroup generated by the zero section and all vertical divisors. 
We deduce by the Shioda-Tate isomorphism (cf. \cite[Theorem 1.3, Corollary 5.3]{shioda-tate-formula} 
or \cite{shioda-schutt-lecture}) $E(K) \simeq NS(X)/T(X)$ that $H=(O)$. 
 Thus $\DD=(O)+F$ and $F\sim V$ as claimed.  
 \end{proof}
\begin{lemma}
\label{l:auto-elliptic}
Let $E/K$ be an elliptic curve over a field $K$. 
Let $D$ be a divisor of degree $d\geq 1$ on $E$. 
 Let $\iota=-\Id$ be the involution. Suppose that $j(E)\neq 0,1728$ then we have: 
\begin{enumerate} [\rm (i)]
 \item
 $\mathrm{Isom}_K(E)=E(K) \rtimes \{\pm \Id\}$. 
  For all $P\in E(K)$, we have $\tau_P \circ \iota =\iota \circ \tau_{-P}$ where $\tau_P$ is the translation-by-$P$ map. 
 \item  
 There exists at most one point $R\in E(K)$ modulo the $d$-torsion group $E[d]$    
such that all isomorphisms $u\in \text{Isom}_K(E)$ verifying $u(D) \sim D$ 
are of the form $\tau_{P}$ or $\iota \circ \tau_{R+P}$ where $P$ is a rational $d$-torsion point of $E(K)$.  
\end{enumerate}
\end{lemma}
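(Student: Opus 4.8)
The plan is to first settle part (i), which is essentially the classical computation of the automorphism group of an elliptic curve, and then bootstrap (ii) from it. For (i), I would recall that any isomorphism $u \colon E \to E$ of the underlying curves (not necessarily fixing the origin) can be written as $u = \tau_{u(O)} \circ u_0$, where $u_0 = \tau_{-u(O)} \circ u$ fixes the origin $O$ and hence is an automorphism of $E$ as an elliptic curve. The assumption $j(E) \neq 0, 1728$ guarantees $\Aut_K(E,O) = \{\pm \Id\}$ (the exceptional $j$-values are precisely where extra automorphisms of order $4$ or $6$ appear; over a field of characteristic zero this is standard, and the paper works over $\C$-function fields). This gives the semidirect product decomposition $\mathrm{Isom}_K(E) = E(K) \rtimes \{\pm\Id\}$, and the relation $\tau_P \circ \iota = \iota \circ \tau_{-P}$ is immediate from $\iota(Q + P) = \iota(Q) - P = \iota(Q) + \iota(P)$, i.e. $\iota$ is a group homomorphism intertwining translation by $P$ and translation by $-P$.

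For part (ii), I would analyze when $u(D) \sim D$ for $u \in \mathrm{Isom}_K(E)$. Write $u = \tau_P$ or $u = \iota \circ \tau_Q$ using (i). Since $\deg D = d$, the linear equivalence class of $D$ corresponds, via the classical isomorphism $\Pic^d(E) \xrightarrow{\sim} E(K)$ (sum of points, after fixing the class of $d \cdot (O)$ as basepoint), to a point I will call $\Sigma_D \in E(K)$. The key computations are: $\tau_P^* $ sends the class of $D$ to the class with associated point $\Sigma_D + dP$ — wait, more carefully, $\tau_P(D)$ has associated point $\Sigma_D - dP$ (pushing forward each point $x$ to $x + P$ adds $dP$ to the sum, so actually $\Sigma_{\tau_P(D)} = \Sigma_D + dP$; one must fix conventions but the upshot is the same). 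So $\tau_P(D) \sim D$ iff $dP = 0$, i.e. $P \in E[d]$. Similarly $\iota(D)$ has associated point $-\Sigma_D$, so $(\iota \circ \tau_Q)(D) \sim D$ iff $-\Sigma_D - dQ = \Sigma_D$, i.e. $dQ = -2\Sigma_D$. The set of such $Q$ (if nonempty) is a coset of $E[d]$, i.e. $Q \in R + E[d]$ for a single $R$ modulo $E[d]$, where $R$ is any solution of $dR = -2\Sigma_D$. Then every $u$ with $u(D) \sim D$ is of the form $\tau_P$ with $P \in E[d]$, or $\iota \circ \tau_{R + P}$ with $P \in E[d]$, which is exactly the claimed form (noting $E[d] \subset E(K)$ automatically for the $\tau_P$ part, but for the $\iota$-part one needs the solvability of $dR = -2\Sigma_D$ over $K$, which is why the statement says "at most one" — if no $K$-rational $R$ exists, only the translations occur).

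The main obstacle I anticipate is purely bookkeeping: getting the sign and basepoint conventions consistent in the identification $\Pic^d(E) \cong E(K)$ and in how $\tau_P$ and $\iota$ act on it, so that the final description matches the stated "$\tau_P$ or $\iota \circ \tau_{R+P}$" with $P \in E[d]$. There is no deep input beyond (i) plus the theory of divisor classes on elliptic curves; the only subtlety worth flagging is that one should phrase the conclusion so it is correct whether or not $dR = -2\Sigma_D$ is solvable in $E(K)$ — hence the wording "at most one point $R$". I would also double-check that the two families $\{\tau_P\}$ and $\{\iota \circ \tau_{R+P}\}$ are disjoint (they are, since an element of the first fixes $O$ only if $P = O$ while $\iota \circ \tau_{R+P}$ fixes a point only as an involution composed with translation — they lie in different cosets of the translation subgroup in $\mathrm{Isom}_K(E)$), so the count is clean.
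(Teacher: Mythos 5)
Your proposal is correct and follows essentially the same route as the paper: part (i) via the decomposition $u=\tau_{u(O)}\circ u_0$ with $\Aut_K(E,O)=\{\pm\Id\}$, and part (ii) by translating $u(D)\sim D$ through the identification of divisor classes with points of $E(K)$ (the paper writes $D\sim(d-1)[O]+[Q]$ and manipulates divisors directly, which is just your $\Sigma_D$-computation in different packaging), yielding $P\in E[d]$ in the translation case and a single coset $R+E[d]$ with $dR=\pm2Q$ in the $\iota$-case. Your explicit remark that ``at most one $R$'' covers the possibly empty case, and the sign discrepancy with the paper's $[d]P=[2]Q$, are immaterial bookkeeping differences.
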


\begin{proof}
For (i) see Theorem \ref{t:translation}.(4) and remark that $\Aut(E)=\{\pm \tau\}$ since 
$j(E)\neq 0,1728$. The second statement is easily checked. 
For (ii), let $u\in \text{Isom}_K(E)$ be such that $u(D)\sim D$. 
From $(i)$, we know that $u$ is of the form $\pm \Id \circ \tau_P$ for some $P\in E(K)$. 
Consider the $K$-divisor $D-d [O]$ on $E$. 
We have $\deg(D-d[O])=0$. 
Hence the isomorphism of groups 
\[
E(K)\to \textbf{Pic}^0_E(K) ,\quad Q\mapsto \mathcal{L}([Q]-[O])  \quad \text{(cf. \cite[Chapter 1]{milne-JVs})}
\] 
implies that $D-d[O]\sim [Q]-[O]$ for some rational point $Q\in E(K)$.  
Thus, we find that $D\sim (d-1)[O]+[Q]$. 
Suppose first that $u=\tau_P$ for some $P\in E(K)$. 
Then $D\sim u(D)\sim (d-1)[P]+[P+Q]\sim (d-1)[P] + [P]  + [Q] - [O]$. 
We deduce that $d[O]\sim d[P]$, i.e., $d([P]-[O])\sim 0$. 
This means exactly that $[d]P=O$, i.e., $P$ is a rational $d$-torsion point of $E$. 
Suppose now that $u= (-\Id)\circ \tau_P$ for some $P\in E(K)$. 
Similarly, we find that 
\[
u(D)\sim (d-1)[-P]+[-P-Q]\sim (d-1)(2[O]-[P]) + (2[O]-[P]) + (2[O]-[Q]) -[O].
\] 
As $D \sim u(D)$, we deduce that $(d+2)[O] \sim d[P] + 2[Q]$. 
Hence $d([P]-[O])\sim 2([Q]-[O])$ and 
thus $[d]P=[2]Q \in E(K)$. 
It suffices to let $R$   a $d$-division point of $[2]Q$ to conclude. 
\end{proof}

\section{Parshin-Arakelov theorem and uniform finiteness of integral points}
\label{section:uniform-caporaso-elliptic}

 We are now in position to construct the map $\beta_{X, \DD, S}$ 
defining on the set of $(S, \DD)$-integral points 
of a nonisotrvial elliptic surface $X \to B$ 
to recover the following uniform result in Theorem \ref{t:uniform-caporaso-elliptic-1}  
without establishing any height bound for integral points. 
 
\begin{proof}[Proof of Theorem \ref{t:uniform-caporaso-elliptic-1}] 
We   denote by $(P)= \sigma_P(B) \subset X$ 
the image section associated to a rational point $P \in X_K(K)$. 
\par
Let $D=\DD_K$ be the pullback of $\DD$ to the generic fibre. 
Let $d= \deg D$ and let $O \in E(K)$ be the zero element. 
We can clearly assume that $\DD$ is integral thus contains no vertical components. 
By Lemma \ref{p:2-division}, 
 every rational point in $E(K)$ has a $(d+1)$-th root in a base extension $K'/K$ of degree at most $16^{2g+t-1}$ which is ramified only above $T$ and with genus $g(K')$ bounded in terms of $g,t$.  
Therefore, up to making the corresponding base change $K' /K$, 
 all $(S, \DD)$-integral points belong to $(d+1)E(K)$.  
 \par
Consider the zero-degree divisor $D-d[O]$. Since $E(K)\neq \emptyset$, we have 
\[
\textbf{\textit{P}}^0_E(K)\simeq E(K) \quad \text{(cf. \cite[Chapter 1]{milne-JVs})
}
\]
where $\textbf{\textit{P}}^0_E(-)$ is the functor which associates to each $K$-scheme $V$ 
the group of families of invertible sheaves on $E$ of degree $0$ 
parametrized by $V$, modulo the trivial family. 
In particular, $\textbf{\textit{P}}^0_E(K)$ is the group of equivalence classes of degree $0$ invertible sheaves on $E$.  
We deduce that $D-d[O] \sim [Q]-[O]$ for some $Q\in E(K)$. 
Now let $P \in E(K)$ be an $(S, \DD)$-integral point. We can suppose $P \in (d+1)E(K)$ by the first paragraph. 
Denote by $R_P, R_Q\in E(K)$ certain $(d+1)$-th roots of $P$ and $Q$ respectively. 
In particular,  
\[
[P]+D\sim (-d[O]+(d+1)[R_P] ) + (d-1)[O]-d[O]+(d+1)[R_Q] \sim (d+1)([R_P]+[R_Q] - [O])
\]
Since $\C(X)=K(E)$, we can extend linear equivalence relations on $E$ to linear equivalence relations on $X$ modulo vertical divisors. 
Thus, for some vertical divisor $F$ on $X$, we have:  
\[
(P) +\DD\sim (d+1)((R_P)+(R_Q)-(O))+ F
\]  
Remark that $f^*(\Pic B) \subset \Pic X$ and   every fibre $X_b$ is integral whenever $b \in B \setminus T$. 
Since $\text{Pic}^0(B)$ is a divisible group, we can write $F\sim (d+1)W-W_T -\varepsilon f^*[b_0]$ for some vertical divisor $W$, $W_T$ and some fixed point $b_0 \in T$ with $0 \leq \varepsilon \leq d$ such that $W_S$ is effective and $f(W_T)\subset T$. 
Therefore, for $L_P= (R_P)+(R_Q)-(O)+W$, we find that 
\[
Z=(P)+\DD+W_T+\varepsilon f^*[b_0] \sim (d+1)L_P. 
\]
\par
We now determine a minimal nonisotrivial fibration of curves associated to $P$ using the tool of cyclic covers. 
By Proposition \ref{p:cyclic-covers}, we obtain  
a degree $d+1$ simple cyclic cover $X'\to X$ of surfaces associated to the data $Z\sim (d+1)L_P$. 
Let $Y \to X'$ be the strict resolution of singularity of $X'$. Consider the
composition $f_P\colon Y \to X' \xrightarrow{} X \to B$. 
Then by the Riemann-Hurwitz formula calculated on general fibres $Y_b \to X_b$, we find that  $f_P$ is a family of curves of genus 
$q$ satisfying $2q-2= d(d+1)$ thus 
\begin{equation}
\label{e:q-parshin-arakelov-integral}
q=(d^2+d+2)/2.
\end{equation}

Let $Z_{sm}$ be the set of regular points of $Z$ and $\DD_{ram} \subset \DD$ be the set of 
ramified points of the finite cover $\DD \to B$. 
Then $W= f(Z_m \cup \DD_{ram}) \subset B$ is finite. 
Consider $b \in B \setminus (W \cup T)$. Then 
$X_b$ is a smooth curve and $Z_b\subset f^{-1}(b)$ is a smooth divisor since $b \notin W\cup T$. 
By Proposition \ref{p:cyclic-covers-fonctorial}, 
$Y_b= f_P^{-1}(b) \to X_b$ is also the degree $d+1$ simple cyclic cover 
associated to the data $Z_b \sim (d+1)(L_P)_b$. 
Hence, Proposition \ref{p:cyclic-covers} implies that the 
fibre $Y_b= f_P^{-1}(b)$ is smooth whenever $b \in B \setminus (W \cup T)$. 
By Theorem \ref{t:isotrivial-dominant}, $Y\to B$ is nonisotrivial since $X$ is nonisotrivial. 
Contract any rational curves on $Y$ if necessary, we obtain a minimal nonisotrivial family over $B$ of type $W \cup T$ of curves of genus $q$. 
Thus $Y \in F_q(B,W \cup T)$. 
\par
Denote $\DD_{sm}$ the set of regular points of $\DD$. Then  
$\DD_{sm} \subset Z_{sm} \cup f^{-1}(S \cup T)$ since the section $(P)$ is smooth 
and does not intersect $\DD$ at fibres lying over $B \setminus S$ by definition of $(S, \DD)$-integral points. 
Hence, $Y \in F_q(B, S')$ where 
$S'=W \cup S \cup  T$ is independent of the choice of the $(S, \DD)$-integral point $P$. 
Let $\DD_{sing}= \DD \setminus \DD_{sm}$. Note that 
\begin{equation}
\label{e:s'-parshin-arakelov-integral}
\#S' \leq s+t+ \# \DD_{ram}+ \#f( \DD_{sing}). 
\end{equation}
If $\DD$ is the zero section then $S'= S \cup T$ and thus $\#S' \leq s+t$.  
 Therefore, we have constructed a Parshin-type map denoted $\beta$ and given by
\begin{align*}
\{(S,\DD)\text{-integral points}\} &\xrightarrow{\,\,\, \beta \,\,\,}  F_q(B,S')  \\
P &\longmapsto (f_P \colon Y \to X \to B)
\end{align*}
with the property that the horizontal part of the branch locus of $Y \to X$ is 
$(P) \cup \DD$. 
\par
We claim that the fibres of the map $\beta$ is uniformly bounded. 
Indeed, let $Y\to B$ be an element of $F_q(B,S)$ that belongs to the image of $\beta$. 
Theorem \ref{c:kani} implies that 
there is at most $M(q,d+1)$ possible $(d+1)$-covers $Y\to X$ up to composition with an element of $\Aut_B(X)=\Aut_K(E)$. 
Here, the function  $M(q,d+1)$ is given in Theorem \ref{c:kani}. 
Hence, it suffices to prove that each such class of $(d+1)$-covers $Y \to X$ (modulo $\Aut_B(X)$) is the image of at most $4d^2$ possible $(S,\DD)$-integral points. 
Indeed, let $P, P'\in E(K)$ be $(S, \DD)$-integral points  
 with $\beta(P)= \beta(P') \in F_q(B,S)$ and suppose that  
there exists $u \in \Aut_B(X)=\Aut_K(E)=E(K)\rtimes \{\pm\Id\}$ (cf. Theorem \ref{t:translation} and Lemma \ref{l:auto-elliptic}) such that 
\[
h_P=u\circ h_{P'}
\]
where $f_P\colon Y \xrightarrow{h_P}  X\to B$ and $f_{P'}\colon Y' \xrightarrow{h_{P'}} X \to B$. 
Since the horizontal parts of the branch loci of $Y \to X$ and of $Y' \to X$ are respectively 
$(P) \cup \DD$ and $(P') \cup \DD$, 
 we must have $u_E\{P',D\}=\{P,D\}$. We consider two cases:

\begin{enumerate}
\item
Suppose that $d\geq 2$.  
We deduce that $u_E(P')=P$ and $u_E(D)=D$. 
 Hence, Lemma \ref{l:auto-elliptic} implies that there is at most one rational point $R\in E(K)$ modulo $E[d]$ 
which depends only on $\mathcal{L}(D)$ such that $u_E$ is of the form $\tau_{U}$ or $(-\Id)\circ \tau_{R+U}$ where $U \in E[d]$. 
As $u_E(P')=P$, there exists at most $2\#E[d]=2d^2$ possibilities for $P'$. 
 \item
Suppose that $d=1$. 
Lemma \ref{l:integral-point-extreme-case} implies that $D\in E(K)$ is a fixed rational point and $\DD$ is a section. 
Since $\Aut_K(E)=E(K)\rtimes \{\pm\Id\}$ and $u_E\{P',D\}=\{P,D\}$, 
we need to find $R \in E(K)$ verifying one of the following conditions: 
\begin{enumerate}
\item
$R+D= D, R+P'=P$;  
\item
$R+D=P, R+P'=D$; 
\item
$- (R+D) = D, -(R+P')=P$; 
\item
$-(R+D)=P, -(R+P')=D$.  
\end{enumerate}
Each case gives at most one choice for $R$ by the first equation and hence at most one choice for $P'$ by the second one. 
Thus, there are at most 4 possibilities for $P'$ in total.   
\end{enumerate}

We have just shown that the fibres of $\beta$ are uniformly bounded by $4d^2M(q,d+1)$.  
Since $F_q(B,S')$ is also uniformly bounded by a function $C(q,g, \#S')$ (cf. Theorem \ref{t:caporaso}),  
the number of $(S,\DD)$-integral points is bounded uniformly by 
$4d^2M(q,d+1)C(q,g,\#S')$, 
which a function depending only on $g$, $s$, $t$, $d$, and $\#\DD_{ram} \cup \DD_{sing}$ 
by the relations \eqref{e:s'-parshin-arakelov-integral} and \eqref{e:q-parshin-arakelov-integral}. 
The proof is thus completed. 
\end{proof}

\section{A negative result on the Parshin-Arakelov theorem}
\label{section:negative-parshin-arakelov-1}

Keep the notations as in Theorem \ref{t:uniform-caporaso-elliptic-1}, 
it is natural to attempt to generalize the above proof of Theorem \ref{t:uniform-caporaso-elliptic-1} 
to show, for example, that for 
every $n \in \N$, the following  
union of integral points of bounded denominators 
\begin{equation}
\label{e:I-n}
I_n = \cup_{S \subset B, \# S \leq n} \{(S, \DD)\text{-integral points}\} 
\end{equation}
is finite (which is true by Corollary \ref{c:height-iso-elliptic-silverman}). 
If we follow the same steps as in the proof of Theorem \ref{t:uniform-caporaso-elliptic-1}, 
we will then obtain a map with uniformly bounded finite fibres 
\begin{align*}
\cup_{S \subset B, \# S \leq n} \{(S,\DD)\text{-integral points}\} &\xrightarrow{\,\,\, \beta' \,\,\,}  \cup_{S' \subset B, \# S' \leq n'} F_q(B,S')  \\
P &\longmapsto (f_P \colon Y \to X \to B)
\end{align*}
for some  $n'  \in \N$.  Thus we reduce to prove the finiteness of 
$\cup_{S' \subset B, \# S' \leq n'} F_q(B,S')$. 
\par
Similarly, 
let $L$ be a very ample line bundle on $X$ and 
suppose that want to prove the finiteness of the following union of integral points 
with respect to a varying divisor
\begin{equation}
\label{e:J-L}
J_L =  \cup_{\DD \in |L|_{sm}} \{(S,\DD)\text{-integral points}\}, 
\end{equation}
where $|L|_{sm}$ denotes  
the open dense algebraic subset of $|L|$ consisting of 
smooth effective divisors (Bertini's theorem). 
Remark that $J_L$ is known to be finite  by Theorem \ref{t:height-iso-elliptic} using the base change 
to curves in $|L|_{sm}$ whose genus is constant.  
\par
Proceeding as in the proof of Theorem \ref{t:uniform-caporaso-elliptic-1}, we can also obtain a 
map 
\begin{align*}
\cup_{\DD \in |L|_{sm}} \{(S,\DD)\text{-integral points}\} 
&\xrightarrow{\, \beta'' \,}  
\cup_{S'' \subset B, \# S'' \leq n''} F_q(B,S'') \\
P &\longmapsto (f_P \colon Y \to X \to B)
\end{align*}
for some $n'' \in \N$ since the divisors $\DD \in |L|_{sm}$ are linearly equivalent, $\DD_{sing}=\varnothing$,  
and $ \# \DD_{rm}$ is uniformly bounded by the Riemann-Hurwitz formula (cf. \cite[Proposition 7.5.4]{liu-alg-geom}). 
It can be shown that the map $\beta''$ has uniformly bounded finite fibres 
(cf. the last part in the proofs of Theorem \ref{t:uniform-caporaso-elliptic-1} 
and Theorem \ref{t:negative-parshin-arakelov}). 
Therefore, we reduce again to show the finiteness of the set 
$\cup_{S'' \subset B, \# S'' \leq n''} F_q(B,S'')$. 
\par
Unfortunately, it turns out that the set $\cup_{S \subset B , \# S \leq s} F_q(B,S)$
is even very far from being finite because 
of the   strong negative finiteness result:

\begin{reptheorem}{t:negative-parshin-arakelov}
 For all large enough $q$ and $s$ depending only on the genus $g$ of $B$,  
\[
\cup_{S \subset B , \# S \leq s} F_q(B,S) 
\]
is uncountably infinite. 
Moreover, there exists $N(q,s,g) \in \N^*$, 
a Zariski dense open subset $I \subset \Proj^{N}$, and 
a map $I \to \cup_{S \subset B , \# S \leq s} F_q(B,S)$ with uniformly bounded finite fibres. 
 \end{reptheorem}

Therefore, additional arguments will certainly be needed with the above method to obtain known 
finiteness results for the above sets $I_n$, $J_L$ (cf. \eqref{e:I-n} and \eqref{e:J-L}). 
 \par
 
Before giving the proof of Theorem \ref{t:negative-parshin-arakelov}, 
we need to begin with several   lemmata. 

\begin{lemma}
\label{l:degree-to-P-1-at-most}
Let $B/k$ be a smooth projective integral curve of genus $g$ over an algebraically closed field $k$. 
Then there exists a finite $k$-morphism $f \colon B \to \Proj^1$ of degree at most $2g+1$. 
\end{lemma}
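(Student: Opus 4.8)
The plan is to exhibit a non-constant rational function on $B$ with at most $2g+1$ poles (counted with multiplicity) and to use the standard fact that such a function is precisely the datum of a finite morphism $B \to \Proj^1$ of the corresponding degree.

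If $g=0$ then $B \cong \Proj^1_k$ and the identity works, so assume $g \geq 1$. Since $k$ is algebraically closed, $B$ has a $k$-rational point $p$; set $D = (g+1)p$, a divisor of degree $g+1$. By the Riemann--Roch theorem on the smooth projective curve $B$,
\[
\dim_k H^0(B,\OO_B(D)) = \deg D + 1 - g + \dim_k H^0(B,\OO_B(K_B - D)) \geq (g+1) + 1 - g = 2,
\]
so $H^0(B,\OO_B(D))$ strictly contains the constants and we may choose a non-constant $f \in H^0(B,\OO_B(D)) \subset k(B)$. Viewing $f$ as a rational map $B \dashrightarrow \Proj^1$, it extends to a morphism because $B$ is a smooth curve, and a non-constant morphism between smooth projective curves is automatically finite (and flat). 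Its degree equals the degree of the polar divisor $(f)_\infty$, which is dominated by $D$; hence $\deg f \leq \deg D = g+1 \leq 2g+1$, which gives the claim (in fact with the sharper bound $g+1$).

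The argument is entirely classical, so there is no real obstacle: the only points requiring the stated hypotheses are the existence of a $k$-rational point on $B$, which is where the algebraic closedness of $k$ is used, and the identity $\deg f = \deg (f)_\infty$ for a finite morphism of smooth projective curves, which holds in every characteristic regardless of separability. If one prefers a bound matching the statement verbatim without invoking the vanishing input in Riemann--Roch, one may instead take any line bundle $L$ on $B$ of degree exactly $2g+1$: then $L$ is very ample and $\dim_k H^0(B,L) = g+2 \geq 2$, so again a non-constant section yields a finite morphism $B \to \Proj^1$ of degree at most $2g+1$.
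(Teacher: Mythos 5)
Your argument is correct, and it actually proves something slightly sharper than the statement. Where the paper takes the divisor $D=(2g+1)[b]$, notes that $\OO(D)$ is very ample, embeds $B$ into $\Proj^{g+1}$ by the complete linear system (Riemann--Roch giving $h^0=g+2$), and then obtains the map to $\Proj^1$ by composing with a coordinate function of degree $2g+1$, you instead apply Riemann--Roch directly to the divisor $(g+1)p$: the inequality $h^0(B,\OO_B((g+1)p))\geq 2$ already yields a non-constant rational function whose polar divisor is bounded by $(g+1)p$, hence a finite morphism $B\to\Proj^1$ of degree at most $g+1\leq 2g+1$. This bypasses very ampleness and the auxiliary embedding entirely, is more elementary, and improves the bound from $2g+1$ to $g+1$; your closing remark with a line bundle of degree exactly $2g+1$ is essentially the paper's route, so you have both in hand. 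The only hypotheses you use are the ones genuinely needed (a rational point, which algebraic closedness supplies, and $\deg f=\deg (f)_\infty$), so there is nothing to fix.
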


\begin{proof}
Let $b \in B$ and consider the effective divisor $D= (2g+1) [b]$. 
Since $\deg D=2g+1$, the line bundle $L=\OO(D)$ is very ample on $B$. 
By Riemann-Roch theorem, we have $\dim H^0(B, L)=g+2$.  
Fixing a basis of $H^0(B, L)$, we obtain an immersion embedding 
$j \colon B \to \Proj^{g+1}= \Proj |L|$. 
Let $z_0, \dots , z_{g+1}$ be the coordinate in $\Proj^{g+1}$. 
It is clear from the construction that for some $i$, the rational function 
$j^*z_i \colon B \to \Proj^1$ is non constant and of degree $\deg(j^*z_i)= 2g+1$.   
 \end{proof}
 
\begin{proof}[Proof of Theorem \ref{t:negative-parshin-arakelov}]
The idea of the proof is very similar to the proof of Theorem \ref{t:uniform-caporaso-elliptic-1}.  
Consider a non-isotrivial elliptic surface $f \colon X \to B$. Such a surface always exists 
since we can obtain one after a finite base change $B \to \Proj^1$ 
of the Legendre family $\mathcal{E} \to \Proj^1$ of elliptic curves defined in the affine plane by the equation 
$$
y^2= x(x-1)(x- \lambda),
$$ 
where $\lambda$ denotes the inhomogeneous coordinate on $\Proj^1$.   
By Theorem \ref{t:isotrivial-dominant}, the obtained surface $X$ is non-isotrivial 
since $\mathcal{E}$ is non-isotrivial. 
The model $\mathcal{E}$ can be taken to be for example the minimal elliptic surface 
associated to the generic elliptic curve $\mathcal{E}_{\C(\lambda)}$ over the function field $\C(\lambda)$. 
By Lemma \ref{l:degree-to-P-1-at-most}, the finite morphism $B \to \Proj^1$ can be taken 
to have degree $m \leq 2g+1$. 
As $\mathcal{E}$ has only three singular fibres lying above $\lambda=0, 1, \infty$, 
It follows that the set $T \subset B$ supporting singular fibres of $X$ has no more than $3(2g+1)$ points. 
\par
Let $H$ be a very ample line bundle on $\mathcal{E}$ and let $L$ be its pull back to $X$, which is also very ample. 
 Let  $d \in \N^*$. 
By Bertini's theorem, we obtain a Zariski open dense subset $I \subset \Proj |2dL|$ 
parametrizing an uncountable family of smooth and irreducible 
horizontal curves $(C_i)_{i \in I}\subset X$ belonging to the complete linear system $|2dL|$. 
 By the adjunction formula, the (arithmetic) genus $g_i$ of the curves $C_i$'s is a constant 
\begin{equation}
\label{generalized-riemann-hurwitz-apply-parshin}
g_i= \frac{C_i(C_i+K_X)}{2}+1= \frac{2dL(2dL+K_X)}{2}+1= m\frac{2dH(2dH+K_{\mathcal{E}})}{2}+1
\end{equation}
which is bounded only in terms of $g$ since $m \leq 2g+1$.  
 From \eqref{generalized-riemann-hurwitz-apply-parshin} and the  
Riemann-Hurwitz Formula  (cf. \cite[Proposition 7.5.4]{liu-alg-geom}), 
it is clear that for every $i \in I$, 
the degree of the ramification divisor $C_{i, ram}$ of the induced cover $C_i^{hor} \to B$ 
 is bounded in function of $g$. 
 Define $W_i= f(C_{i,ram}) \subset B$. 
It follows that $\#W_i$ is finite and bounded only in terms of $g$.  
\par
We can now obtain an uncountable number of pairwise non-equivalent   
non-isotrivial minimal families $Y_i\to B$ of curves of genus $q \geq 2$ 
which factor through $X\to B$ as follows. 
Let $X_b$ ($b \in B$) and $F$ be general fibres of $X$ and $\mathcal{E}$ respectively. 
Denote 
$$
n = L \cdot X_b = m (H \cdot F) \leq (2g+1)(H \cdot F).
$$   
By Proposition \ref{p:cyclic-covers}, we obtain  
a cyclic double cover $X'_i \to X$ of surfaces associated to the data 
$\OO(C_i) \sim 2ndL$. 
Let $Y_i \to X'_i$ be the strict resolution of singularity of $X'_i$. 
We blow down in $Y_i$ any $(-1)$-curves if they exist. 
Denote $f_i\colon Y_i \to X'_i \xrightarrow{} X \to B$ the induced composition. 
By the Riemann-Hurwitz formula applied to the general fibres $Y_{i, b} \to X_b$, 
we find that $f_i$ is a family of curves of genus 
$q$ satisfying $2q-2=2nd$ thus $q=nd+1 \geq 2$. 
\par
 Let $b \in B \setminus (W_i \cup T)$ where we recall that $W_i= f(C_{i,ram})$.   
Then $X_b$ is a smooth curve and $C_{i,b} \subset f^{-1}(b)$ is a smooth divisor by construction. 
By Proposition \ref{p:cyclic-covers-fonctorial}, 
$Y_{i,b}= f_i^{-1}(b) \to X_b$ is also the degree $d+1$ simple cyclic cover 
associated to the data $C_{i,b} \sim (2dL)_b$. 
Hence, Proposition \ref{p:cyclic-covers} implies that the 
fibre $Y_{i,b}= f_i^{-1}(b)$ is smooth whenever $b \in B \setminus (W_i \cup T)$. 
By Theorem \ref{t:isotrivial-dominant}, $Y_{i}\to B$ is nonisotrivial since $X$ is nonisotrivial by hypothesis. 
 Therefore, $Y_i$ is a minimal nonisotrivial family over $B$ of type $W_i \cup T$ of curves of genus $q$ and  
$$
Y_i \in F_q(B,W_i \cup T) \subset \cup_{S \subset B , \# S \leq s} F_q(B,S) 
$$ 
where the number $s$ depends only on $g$. 
To summarize, for the Zariski dense open locus of 
integral smooth curves $I \subset \Proj |2dL|$, 
we have defined a map 
$$
I  \to  \cup_{S \subset B , \# S \leq s} F_q(B,S), \quad i  \mapsto (Y_i \to B).
$$ 
To finish, we shall show that for any fixed $i \in I$, there are at most a uniformly bounded number (in terms of $g$) of $j \in I$ such that 
$Y_j\simeq Y_i$ over $B$. 
  \par
Theorem \ref{c:kani} tells us that 
 there is no more than $M(q,2)$ possible double covers $Y_i\to X$ up to composition with an element of $\Aut_B(X)=\Aut_K(E)$. 
Therefore, it is enough to prove that  each such class of double covers 
 has at most $8d^2n^2$ covers of the form $Y_j \xrightarrow{h_j}X$ 
where $f_i\colon Y_i \xrightarrow{h_i}  X\to B$ and $f_{j}\colon Y_j \xrightarrow{h_{j}} X \to B$ are constructed as above.
 For such $j$, there exists $u \in \Aut_B(X)=\Aut_K(E)=E(K)\rtimes \{\pm\Id\}$ (cf. Theorem \ref{t:translation} and Lemma \ref{l:auto-elliptic}) such that $h_i=u\circ h_j$. 
 Since the horizontal parts of the branch loci of $Y_i \xrightarrow{h_i} X$ and of $Y_j \xrightarrow{h_j} X$ are respectively 
$C_i$ and $C_j$, 
we must have $u(C_j)=C_i$. 
Since $C_i \sim C_j$, Lemma \ref{l:auto-elliptic} implies that there is at most one rational point $R\in E(K)$ modulo $E[2nd]$ with $u$ of the form $\tau_{U}$ or $(-\Id)\circ \tau_{R+U}$ where $U \in E[2nd]$. 
It follows that there are at most $2\#E[2nd]=2(2nd)^2=8d^2n^2$ choices for such $u$ and thus 
at most $8d^2n^2$ possibilities for such $j$ as claimed since $C_j=u^{-1}(C_i)$. 
 Therefore, the theorem is proved.   
\end{proof}

\section{Application to unit equations} 
\label{s:parshin-arakelov-unit-equation}
We shall apply the covering method to give a new 
proof for the uniform finiteness of unit equation over function fields as follows.  
\par
Let's begin with a construction of covers associated to integral sections in a 
ruled surface.  
Let $f \colon X \to B$ be the trivial ruled surface over $B$ 
and let $\pi \colon X \to B$ be the second projection. 
Let $\DD$ be the closure in $X$ of an effective reduced divisor $D$ of odd degree $d \geq 3$ on $\Proj^1_K$. 
Let $P\in X_K(K) \setminus D$ with $(P)\subset  X$  the corresponding section $B \to X$. 
Denote  
\[
Z=(P)+\DD.
\] 
Consider the divisor $Z_K=D+ [P]$ on $\Proj^1_K$. 
Let $b\in S$ be a fixed point. 
Since $\deg(Z_K)=\deg(D)+1=d+1$ and since the group $\text{Pic}^0_K(\Proj^1_K)$ of equivalent classes of degree $0$ line bundles of $\Proj^1_K$ is trivial, 
we deduce that $Z_K\sim_K (d+1)[P]$. 
Since $k(X)=K(t)=K(\Proj^1_K)$, we can extend the linear equivalence on the surface $X$ up to a vertical divisor. 
Hence, we deduce that there exists a vertical divisor $F$ on $X$ such that 
\[
Z\sim  (d+1)(P)+F.
\] 
We can write $F=\sum_in_i \pi^*[b_i]$ for $b_i \in B$ and $n_i\in \Z$ where $[b_i]$ denotes the effective divisor associated to $b_i$. 
We have $\sum_i n_i =2m-r$ for some $m, r \in \N$ and $0 \leq r \leq 1$. 
Thus $\text{deg}(\sum_i n_i [b_i] - (2m - r)[b])=0$. 
By properties of the Jacobian $J(B)$, we find that $\sum_i n_i [b_i] \sim \OO(2m[b]+r[b])$. 
It follows that $F=\sum_in_i \pi^*[b_i] \sim 2\OO(m\pi^*[b])+r[b]$. 
Thus for 
$L=\OO( \frac{d+1}{2}  (P) + m \pi^*[b])$, we have 
\[
Z + r \pi^*[b]  \sim L^{\otimes 2}.  
\] 
 
\par
 
By Proposition \ref{p:cyclic-covers}, we obtain a double cyclic cover $X' \to X$ 
associated to the data $(Z+ r\pi^*[b], L^{\otimes 2})$. The cover $X' \to X$ is  totally ramified above $Z+ r \pi^*[b]$. 
Moreover, it is smooth above $B\setminus S$ whenever $(P)$ is $(S, \DD)$-integral.  
By a strong resolution of singularity and by contracting all possible $(-1)$-curves,  
we obtain a minimal family of curves  
\[
f_P\colon Y_P\to X\to B.  
\]

\begin{definition}  
We say that $P$ is \emph{$\DD$-nontrivial} (or simply \emph{nontrival} 
if no there is no possible confusion)  
if the induced family $f_P \colon Y_P \to B$ is nonisotrivial.   
\end{definition}

\begin{example}
When $\DD$ is the sum of the three sections $(0)$, $(1)$, $(\infty)$, a 
rational point $x \in X(K)$ is $\DD$-nontrivial if and only if $x \notin \C^*$.    
\end{example}

The main result of the section is the following. 

\begin{theorem}
\label{t:unit-general-3}
Let the notations be as above.  
There is a function $U \colon \N^5 \to \N$ such that the following set of integral points 
\begin{align*}
 \{\text{nontrivial }(S, \DD)\text{-integral points of }X \to B \}
\end{align*}
has at most $U(g,s,d,d_{sing}, d_{ram})$ elements where $d_{sing}$, $d_{ram}$ 
denote the number of singular points on $\DD$ and the number of ramified points of the induced 
degree-$d$ cover $\DD \to B$. 
\end{theorem}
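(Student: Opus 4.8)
The plan is to carry over the Parshin-type argument of Theorem~\ref{t:uniform-caporaso-elliptic-1}, with the ruled surface $X=\Proj^1\times B$ in place of an elliptic surface and using the cyclic cover $f_P\colon Y_P\to X\to B$ built above. One preliminary adjustment makes the bookkeeping uniform: so that the families produced have general fibre of genus $\ge 2$ (needed below to invoke finiteness of $F_q(B,\cdot)$), I would in that construction use the degree $d+1$ simple cyclic cover attached to $Z_K=D+[P]\sim(d+1)\OO_{\Proj^1_K}(1)$ (available since $\Pic^0\Proj^1_K=0$) rather than the double cyclic cover; this replacement leaves unchanged which points $P$ are $\DD$-nontrivial, since $\DD$-nontriviality only records whether the configuration of $d+1$ points cut by $Z$ on the fibres of $X$ varies with the fibre. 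After the same absorption of the vertical part as above, Proposition~\ref{p:cyclic-covers} yields $f_P\colon Y_P\to X\to B$, and Riemann--Hurwitz on a general fibre (a degree $d+1$ cover of $\Proj^1$ totally ramified at the $d+1$ points of $Z_b=D_b+[P(b)]$) gives general fibres of genus $q$ with $2q-2=(d+1)(d-2)$, so $q=\binom d2\ge 3$ for every $d\ge 3$. In particular $q$ is a function of $d$ alone, and the generic fibre $Y_{P,K}\to\Proj^1_K$ is a cyclic (hence Galois) cover whose branch divisor is exactly $D+[P]$, of degree $d+1$.

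Next, exactly as in Section~\ref{section:uniform-caporaso-elliptic}, I would check that for every $\DD$-nontrivial $(S,\DD)$-integral point $P$ the family $Y_P\to B$ is a minimal nonisotrivial family of curves of genus $q$ whose singular fibres lie over
\[
S'\subseteq S\cup f(\DD_{ram})\cup f(\DD_{sing})\cup\{b_0\},
\]
where $b_0$ is the single point carrying the vertical correction as above. Indeed, nonisotriviality is the hypothesis of $\DD$-nontriviality; for $b\notin S'$ the fibre $Y_{P,b}$ is, by Proposition~\ref{p:cyclic-covers-fonctorial}, the degree $d+1$ cyclic cover of the smooth curve $X_b\cong\Proj^1$ branched at the $d+1$ \emph{distinct} points of $Z_b=D_b+[P(b)]$ --- distinct because $\DD$ is smooth and unramified over such $b$ and because $(P)$ meets $\DD$ only over $S$ by $(S,\DD)$-integrality --- hence smooth by Proposition~\ref{p:cyclic-covers}, while the resolution of singularities and the contraction of $(-1)$-curves only modify fibres over $S'$. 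Thus $\#S'\le s+d_{ram}+d_{sing}+1$, and we obtain a Parshin-type map
\[
\beta\colon\{\DD\text{-nontrivial }(S,\DD)\text{-integral points}\}\longrightarrow F_q(B,S'),\qquad P\longmapsto(Y_P\to B),
\]
whose defining property is that the horizontal part of the branch locus of $Y_P\to X$ is $\DD+(P)$.

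The heart of the proof is the bound on the fibres of $\beta$; this is the step I expect to be the genuine obstacle, and it is where the hypothesis $d\ge 3$ enters and where the ruled case departs from the elliptic one, since now $\Aut_B(X)=\mathrm{PGL}_2(K)$ is infinite. Fix $Y\in F_q(B,S')$ in the image of $\beta$, and work with $\bar K$-points throughout. As $q\ge 2$, the group $\Aut(Y_{\bar K})$ is finite, of order at most $84(q-1)$; every cyclic degree-$(d+1)$ cover $Y_{\bar K}\to\Proj^1_{\bar K}$ is the quotient by an order-$(d+1)$ cyclic subgroup of $\Aut(Y_{\bar K})$, followed by an identification of the quotient with $\Proj^1$ which is unique up to $\mathrm{PGL}_2(\bar K)$; hence there are at most $84(q-1)$ such covers up to $\mathrm{PGL}_2(\bar K)$ (this is the rational counterpart of the Tamme--Kani finiteness used in Section~\ref{section:uniform-caporaso-elliptic}, cf. Theorem~\ref{c:kani}). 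It therefore suffices to bound, within one $\mathrm{PGL}_2$-class of covers, the number of $P$ realising it. Fixing a representative cover with branch divisor $R_0$, which has degree $d+1$, any such $P$ satisfies $D+[P]=u(R_0)$ for some $u\in\mathrm{PGL}_2(\bar K)$; then $u$ carries one of the $d+1$ many $d$-element subsets of $\supp R_0$ bijectively onto $\supp D$, and for each such subset the admissible $u$ form a coset of the \emph{finite} group $\{v\in\mathrm{PGL}_2:v(\supp D)=\supp D\}$ --- finite because $\#\supp D=d\ge 3$, and in fact of order at most $60$. So there are at most $60(d+1)$ such $u$, and each determines $P$ as the unique point with $u(R_0)=D+[P]$. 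Consequently every fibre of $\beta$ has at most $60(d+1)\cdot84(q-1)$ elements.

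Finally, Caporaso's uniform Parshin--Arakelov theorem (Theorem~\ref{t:caporaso}), available since $q\ge 2$, gives $\#F_q(B,S')\le C(q,g,\#S')$, so the number of $\DD$-nontrivial $(S,\DD)$-integral points of $X\to B$ is at most
\[
60(d+1)\cdot84(q-1)\cdot C\bigl(q,\,g,\,s+d_{ram}+d_{sing}+1\bigr),
\]
which, since $q=\binom d2$ depends only on $d$, is a function $U(g,s,d,d_{sing},d_{ram})$ as required. The decisive point --- and the one I expect to require the most care --- is the fibre bound for $\beta$: the infinite group $\mathrm{PGL}_2(K)$ can be controlled only because a divisor $\DD$ of degree $d\ge 3$, rather than the section $(P)$ alone, is forced into the branch locus, and it is precisely this that rigidifies the admissible $u$; the rest is a routine transcription of the proof of Theorem~\ref{t:uniform-caporaso-elliptic-1}.
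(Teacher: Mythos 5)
Your proposal is correct in substance and follows the same Parshin-type template, but it genuinely reroutes the two finiteness inputs. The paper sticks with the double cover constructed before the statement, so its fibrations have genus $q=(d-1)/2$, which forces a case split: for $d=3$ the families are elliptic and the paper invokes the uniform Shafarevich theorem (Theorem \ref{t:uniform-shafarevich-prob}) plus the uniqueness of the degree-$2$ map of an elliptic curve to $\Proj^1$, while for $d\ge 5$ it uses Bujalance--Gromadzki (Theorem \ref{t:buj-gro}) to bound double covers, and in both cases it bounds the residual M\"obius ambiguity by an argument in the spirit of the $24$-maps lemma. You instead take the degree-$(d+1)$ simple cyclic cover, so $q=\binom d2\ge 3$ for all $d\ge 3$; this buys you a single uniform case (only Caporaso's Theorem \ref{t:caporaso} is needed, no Shafarevich step), and you replace the double-cover counting input by the Hurwitz bound $\#\Aut(Y_{\bar K})\le 84(q-1)$ together with the observation that a cyclic cover is the quotient by a cyclic subgroup of $\Aut$, the quotient identification with $\Proj^1$ being unique up to $\mathrm{PGL}_2$. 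Your treatment of the M\"obius ambiguity for a general degree-$d$ branch divisor (coset of the setwise stabilizer of $\supp D$, times the $d+1$ choices of omitted point) is in fact more careful than the paper's appeal to the $\{0,1,\infty\}$ lemma.

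Two points need repair, though neither breaks the argument. First, ``nontrivial'' is \emph{defined} in the paper via nonisotriviality of the double-cover family $Y_P\to B$, so after switching to the $(d+1)$-cover you owe a proof that $\DD$-nontriviality forces your new family $\widetilde Y_P\to B$ to be nonisotrivial; your one-line assertion about ``the configuration of $d+1$ points'' is true but not an argument. The cleanest fix stays inside the paper: since $d+1$ is even, the cyclic group $\Z/(d+1)$ has an index-$2$ quotient, so on general fibres $\widetilde Y_{P,b}$ dominates the double cover of $X_b$ branched at $Z_b$, i.e.\ dominates $Y_{P,b}$; if $\widetilde Y_P$ were isotrivial, Theorem \ref{t:isotrivial-dominant} (or the de Franchis argument in its proof) would make $Y_P$ isotrivial, contradicting nontriviality. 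Second, your claim that $\{v\in\mathrm{PGL}_2: v(\supp D)=\supp D\}$ has order at most $60$ is false: for $\supp D$ the $d$-th roots of unity the stabilizer contains a dihedral group of order $2d$ (finite subgroups of $\mathrm{PGL}_2$ include arbitrarily large cyclic and dihedral groups). Finiteness is all you need, and since a M\"obius map is determined by the images of three points the order is at most $d(d-1)(d-2)$, which is admissible because $U$ may depend on $d$; the final bound should read $(d+1)\,d(d-1)(d-2)\cdot 84(q-1)\cdot C\bigl(q,g,s+d_{ram}+d_{sing}+1\bigr)$ rather than the constant $60\cdot 84(q-1)(d+1)$ you wrote.
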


From Theorem \ref{t:unit-general-3}, we   recover the 
following known uniform  finiteness result on unit equation over function fields. 

\begin{corollary}
\label{c:unit-uniform-consequence}
The number of the solutions $(x,y)$ with $x/ y \notin \C$ of the $S$-unit equation 
$x+y=1$, $x, y \in \OO_S^*$ is uniformly bounded in terms of $g,s$. 
\end{corollary}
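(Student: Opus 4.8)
The plan is to deduce Corollary \ref{c:unit-uniform-consequence} from Theorem \ref{t:unit-general-3} by translating the $S$-unit equation into a problem about $(S,\DD)$-integral sections of the trivial ruled surface $X = \Proj^1 \times B \to B$. First I would set up the dictionary: a solution $(x,y)$ with $x+y=1$ and $x,y \in \OO_S^*$ is precisely a morphism $B \to \Proj^1_K$ (equivalently a point $x \in X_K(K)$) whose values avoid $0, 1, \infty$ outside of $S$; that is, taking $\DD$ to be the closure of the reduced divisor $D = [0] + [1] + [\infty]$ on $\Proj^1_K$ (which has odd degree $d=3 \geq 3$), the condition $x,y \in \OO_S^*$ together with $x+y=1$ is exactly the statement that $x$ is $(S,\DD)$-integral in the sense of Definition \ref{d:s-d-integral-point-geometric}. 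Indeed $x \in \OO_S^*$ forces $x$ to avoid $0$ and $\infty$ off $S$, and $y = 1-x \in \OO_S^*$ forces $x$ to avoid $1$ off $S$.

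Next I would invoke the Example following the definition of $\DD$-nontriviality: with $\DD = (0)+(1)+(\infty)$, a rational point $x \in X(K)$ is $\DD$-nontrivial if and only if $x \notin \C^*$, i.e.\ if and only if $x$ is nonconstant; and the hypothesis $x/y \notin \C$ is equivalent to $x$ being nonconstant (since $y = 1-x$, the ratio $x/y$ is constant iff $x$ is constant). Hence the solutions $(x,y)$ with $x/y \notin \C$ correspond bijectively to the nontrivial $(S,\DD)$-integral points of $X \to B$ for this specific choice of $\DD$.

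Finally I would apply Theorem \ref{t:unit-general-3}: the number of nontrivial $(S,\DD)$-integral points is at most $U(g,s,d,d_{sing},d_{ram})$. For our divisor $D = [0]+[1]+[\infty]$ we have $d = 3$ and $d_{sing} = 0$ (the three points are distinct, so $\DD$ is smooth); and $d_{ram}$, the number of ramification points of the induced degree-$3$ cover $\DD \to B$, is controlled by Riemann-Hurwitz purely in terms of the genus of $\DD$ and hence ultimately in terms of $g$ (each of the three components of $\DD$ maps isomorphically or as a cover to $B$ with ramification bounded by Riemann-Hurwitz; in fact if $D$ is $K$-rational each component is a section and $d_{ram}=0$). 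Thus $U(g,s,3,0,d_{ram})$ is a function of $g$ and $s$ alone, which gives the asserted uniform bound.

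The only genuinely delicate point is the bookkeeping in the correspondence: one must check that passing between the arithmetic statement ($x,y \in \OO_S^*$) and the geometric statement ($f(\sigma_x(B) \cap \DD) \subset S$) is an equivalence and not merely an implication, and that the ``nontrivial'' condition matches ``$x/y \notin \C$'' exactly rather than up to a constant family; both are straightforward from the definitions and the worked Example, but they are where a careless reading could introduce a gap. Everything else is a direct substitution of numerical invariants into Theorem \ref{t:unit-general-3}.
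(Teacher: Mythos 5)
Your proposal is correct and takes essentially the same route as the paper: both reduce the count of solutions $(x,y)$ to counting nontrivial $(S,\DD)$-integral points of $X=\Proj^1\times B$ with $\DD=(0)+(1)+(\infty)$, observe that $\DD$ is smooth with $\DD\to B$ \'etale (so $d_{sing}=d_{ram}=0$), and then apply Theorem \ref{t:unit-general-3}. Your extra checks (that $x/y\notin\C$ matches nonconstancy of $x$ and that the unit conditions translate exactly into $(S,\DD)$-integrality) are precisely the bookkeeping the paper leaves implicit, so there is no gap.
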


\begin{proof}
It suffices to show that the number of $x \in \OO_S^*$ with $1-x \in \OO_S^*$ and $x \notin \C$ is uniformly bounded. 
Such a point $x$ is exactly a nontrivial $(S,\DD)$-integral points in $X= \Proj^1 \times B$ 
with $\DD= (0)+(1)+(\infty)$. 
Since $\DD$ has no singular points and the cover $\DD \to B$ is \' etale, the corollary follows immediately from Theorem 
\ref{t:unit-general-3}.  
\end{proof}

\begin{remark}
\label{r:application-parshin-arakelov-to-unit-evertse} 
The first result on unit equations traces back to the work of Lang (cf.  \cite{lang-60}). 
In fact, Theorem \ref{t:unit-general-3} and 
Corollary \ref{c:unit-uniform-consequence}   
are consequences of the following remarkable result of 
Evertse (cf. \cite{evertse-86}) whose proof uses 
height theory combined with the   gap principle: 

\begin{theorem*}
[Evertse]
Let $B$ be a smooth projective curve over a field $k$ of characteristic $0$. 
Let $K=k(B)$ be the function field of $B$ and $S \subset B$ a finite subset. 
The set of  $(x, y) \in (\OO_{K, S}^*)^2$ with $x/y \notin k$ and 
$x+y=1$ has at most $2 \times 7^{2\#S}$ elements.  
\end{theorem*}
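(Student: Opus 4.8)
The statement is a theorem of Evertse \cite{evertse-86}; the plan is to reproduce the structure of his proof, and I indicate below where the real difficulty lies. \textbf{Reductions.} First I would reduce to the case where $k$ is algebraically closed in $K$: passing to the exact constant field does not change $B$ as a scheme, hence leaves $S$ and $\#S$ unchanged. Since $x+y=1$, one checks at once that $x\in k\iff y\in k\iff x/y\in k$ (if $x/y=c\in k$ then $x=c/(1+c)\in k$), so the condition $x/y\notin k$ is equivalent to $x$ being non-constant; thus it suffices to bound by $2\cdot 7^{2s}$, where $s=\#S$, the number of non-constant $x\in\OO_{K,S}^{*}$ with $1-x\in\OO_{K,S}^{*}$. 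I will use the height $H(x)=[K:k(x)]=\sum_{v}\max(0,-\ord_{v}x)\deg v$ and the fact that $\OO_{K,S}^{*}/k^{*}$ is free abelian of rank $r\le s-1$, embedded in $\Z^{S}$ by $x\mapsto(\ord_{v}x)_{v\in S}$ subject to $\sum_{v}(\ord_{v}x)\deg v=0$.

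\textbf{Outline.} The argument has three parts. (1) \emph{Height bound.} Mason's inequality (the $abc$-theorem for function fields) gives $H(x)\le\max(2g-2,0)+s$ for every non-constant solution, because the zeros and poles of $x$ and of $1-x$ all lie in $S$, so the relevant conductor has at most $s$ points; this already yields finiteness, but with a genus-dependent bound. (2) \emph{Type decomposition.} At each $v\in S$, applying $\ord_{v}$ to $x+(1-x)=1$ and the ultrametric inequality forces the minimum of $\ord_{v}x$, $\ord_{v}(1-x)$ and $0$ to be attained at least twice, so exactly one of three patterns holds at $v$ (namely $\ord_{v}x=0<\ord_{v}(1-x)$, or $\ord_{v}(1-x)=0<\ord_{v}x$, or $\ord_{v}x=\ord_{v}(1-x)\le 0$). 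Grouping solutions by the resulting vector in $\{1,2,3\}^{S}$ gives at most $3^{s}$ classes. (3) \emph{Gap principle.} Within one class, I would compare two solutions $x,x'$ via the identity $1-x/x'=(x'-x)/x'=\bigl((1-x)-(1-x')\bigr)/x'$, whose divisor becomes sharply constrained once $x$ and $x'$ share a type, to reduce the count in that class to an auxiliary $S$-unit equation with strictly fewer free parameters; then induct on the rank down to rank-$\le 1$ equations, whose solutions are bounded outright, obtaining at most $c^{s}$ solutions per class for an absolute constant $c$. Combining (2) and (3) and optimizing the constants, exactly as in \cite{evertse-86}, produces the bound $2\cdot 7^{2s}$, and summing the pairs $(x,1-x)$ (of which there are as many as admissible $x$) finishes the proof.

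\textbf{Main obstacle.} The genuine content lies in step (3): step (1) is easy but genus-dependent, and it is only the type decomposition together with the rank-descent that make the final count depend on $\#S$ alone, independently of the genus of $B$ and of the precise shape of the $S$-unit lattice. A naive gap principle (heights roughly doubling along a class) would still leave a $\log$-in-$g$ dependence, so one really needs the finer Evertse–Schlickewei-style counting at this step. I would therefore spend most of the effort establishing the explicit per-class bounds, following \cite{evertse-86} (and the streamlined account in the Evertse--Gy\H{o}ry monograph on unit equations); the reductions and the invocation of Mason's inequality above are routine.
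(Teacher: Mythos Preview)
The paper does not prove this statement at all. Evertse's theorem appears inside Remark~\ref{r:application-parshin-arakelov-to-unit-evertse} purely as a citation: the paper quotes the result from \cite{evertse-86} in order to remark that its own Theorem~\ref{t:unit-general-3} and Corollary~\ref{c:unit-uniform-consequence} are (weaker) consequences of it, and explicitly contrasts Evertse's height-theoretic method with the covering method developed in Section~\ref{s:parshin-arakelov-unit-equation}. There is no ``paper's own proof'' to compare against; the paper treats Evertse's theorem as a black box.

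Your proposal therefore goes well beyond what the paper does: you are sketching Evertse's original argument itself. As a sketch it is broadly faithful to \cite{evertse-86} --- Mason--Stothers for the height bound, a $3^{s}$ type decomposition according to the valuation pattern at each place of $S$, and then a gap/descent argument within each class --- and you are honest that the substance is entirely in step~(3). One caution: the ``rank-descent to an auxiliary $S$-unit equation with strictly fewer free parameters'' is more a description of what one wants than of what Evertse actually does; his paper handles the per-class count by a direct gap argument on heights (using that two solutions in the same class with comparable heights force a nontrivial identity whose conductor is too small for Mason's inequality), not by an inductive reduction of rank. If you intend to reconstruct the precise constant $2\cdot 7^{2s}$ you will need to follow that argument rather closely; an Evertse--Schlickewei-style subspace-counting approach would give a bound of the same shape but not the same constant.
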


Evertse's effective bound is totally independent of the function field $K$. 
To obtain Theorem \ref{t:unit-general-3}, 
it suffices to make the base change $h \colon \tilde{D} \to B$ of degree 
$\deg(h)=d= \deg D_K$ where $\tilde{D}$ 
is the normalization of $\DD$. 
The set 
of nontrivial $(S, \DD)$-integral points of $X \to B$ 
then becomes a subset of the set of solutions of the $S'$-unit equation 
$x+y=1$ where $S'= h^{-1}(S)$ so that $\# S' \leq \deg(h) \# S= ds$. 
\end{remark}

However, our proof of Theorem \ref{t:unit-general-3} below does not use height theory and 
thus we give a new proof of a weak (but nontrivial) version of Evertse's theorem. 
\par
Remark  first that since an isomorphism of $\Proj^1$, i.e., a M\" obius transformation, 
is completely determined by the images of three distinct points.  
We can thus easily obtain the following elementary lemma. 

\begin{lemma}
\label{l:unit-non trivial}
Let $P,Q\in \Proj^1\backslash \{0,1,\infty\}$ be two distinct points.  
Then at most $4!=24$ isomorphisms of $\Proj^1$ sends the set 
$\{0,1,\infty,P\}$ to the set $\{0,1,\infty, Q\}$. 
\end{lemma}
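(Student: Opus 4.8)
The plan is to reduce the statement to the classical fact that a M\"obius transformation is determined by its action on three distinct points. First I would observe that an isomorphism $\varphi$ of $\Proj^1$ sending $\{0,1,\infty,P\}$ to $\{0,1,\infty,Q\}$ must, in particular, send the four-element set $\{0,1,\infty,P\}$ bijectively onto the four-element set $\{0,1,\infty,Q\}$ (the fact that both sets have exactly four elements uses the hypothesis $P\notin\{0,1,\infty\}$ and $Q\notin\{0,1,\infty\}$, and that $P\neq Q$ is not even needed for the counting upper bound, only for the sets to be genuinely distinct).

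The key step is then: any bijection $\{0,1,\infty,P\}\to\{0,1,\infty,Q\}$ is one of $4!=24$ set-theoretic maps, and each such prescribed bijection determines $\varphi$ uniquely \emph{if} it extends to a M\"obius transformation at all, because specifying the images of the three distinct points $0,1,\infty$ already pins down $\varphi$ completely (and then the image of $P$ is forced, so only those bijections compatible with that forcing actually occur). Hence the number of such isomorphisms is at most the number of bijections of a $4$-element set, namely $24$. I would phrase this as: the restriction map $\varphi\mapsto \varphi|_{\{0,1,\infty,P\}}$ is injective into the set of bijections $\{0,1,\infty,P\}\to\{0,1,\infty,Q\}$, and the latter has $24$ elements.

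There is essentially no obstacle here; the only thing to be careful about is making sure the sets really have four elements so that ``bijection'' is the right notion and the bound is $4!$ rather than something smaller or ill-defined, which is exactly guaranteed by $P,Q\notin\{0,1,\infty\}$. I would keep the proof to two or three sentences.

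\begin{proof}
Since $P \notin \{0,1,\infty\}$ and $Q \notin \{0,1,\infty\}$, both $\{0,1,\infty,P\}$ and $\{0,1,\infty,Q\}$ are sets of exactly four elements. Any isomorphism $\varphi$ of $\Proj^1$ with $\varphi(\{0,1,\infty,P\})=\{0,1,\infty,Q\}$ restricts to a bijection between these two four-element sets, and in particular it permutes the three distinct points $0,1,\infty$ among the images. A M\"obius transformation is uniquely determined by the images of three distinct points, so $\varphi$ is determined by its restriction to $\{0,1,\infty\}$, hence by its restriction to $\{0,1,\infty,P\}$. Therefore the map $\varphi \mapsto \varphi|_{\{0,1,\infty,P\}}$ is injective from the set of such isomorphisms into the set of bijections $\{0,1,\infty,P\} \to \{0,1,\infty,Q\}$, which has $4! = 24$ elements. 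Thus there are at most $24$ such isomorphisms.
\end{proof}
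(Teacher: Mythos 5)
Your proof is correct and follows essentially the same route as the paper, which simply observes that a M\"obius transformation is determined by the images of three distinct points and deduces the lemma from this; you have merely spelled out the counting (injectivity of the restriction map into the $4!$ bijections of the four-element sets $\{0,1,\infty,P\}\to\{0,1,\infty,Q\}$) that the paper leaves implicit.
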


\begin{proof}[Proof of Theorem \ref{t:unit-general-3}]  
Let $P\in X(K)$ be a nontrivial $(S,\DD)$-integral point of $X$. 
Then $Z=(P)+\DD$ has degree $d+1$ on general fibres of $Y_P \to B$. 
By the Riemann-Hurwitz formula applied to the double cover $Y_{P, t} \to X_t$ of general fibres ($t \in B$), 
$Y_P \to B$ is a family of curves of genus $q$ such that 
$2q-2=2(-2)+d+1$ so that $q=(d-1)/2$.  
By Proposition \ref{p:cyclic-covers} and Proposition \ref{p:cyclic-covers-fonctorial}, $Y_P \to B$ is of type 
$S'=S \cup \pi(\DD_{sing} \cup \DD_{ram})$ where 
$\DD_{sing}$ is the set of singular points of $\DD$ and $\DD_{ram} \subset \DD$ is the set of 
ramified points of the finite cover $\DD \to B$. 
\par
Since $P$ is a nontrivial $(S,\DD)$-integral point, the family $f_P$ is nonisotrivial and has good reductions  
outside of $S$. 
To summarize, we have constructed a  
map  
\begin{align*}
 \{\text{nontrivial }(S, \DD)\text{-integral points of }X \to B \} &\xrightarrow{\,\,\, \gamma\,\,\,} F_{q}(B,S') \\
P &\longmapsto (f_P \colon Y_P \xrightarrow{h_{P}}  X \to B)
\end{align*}
where $F_{q}(B,S')$ denotes the set of non equivalent classes of minimal families of curves of genus $q$ of type $S'$. 
Since $q \geq 1$ as $d \geq 3$, Shafarevich theorem (Theorem \ref{t:uniform-shafarevich-prob}) and the Parshin-Arakelov theorem (Theorem \ref{t:mordell-uniform-caporaso}) imply that $\# F_{q}(B,S')$ 
is uniformly bounded in terms of $q, g, \# S'$. 
To finish, we need to show that the map $\gamma$ above has uniformly bounded fibres. 
For this,  let $f_P \colon Y_P \xrightarrow{h_{P}}  X \to B$ be in the image of $\gamma$ for a some integral point $P$.   
We distinguish two cases. 
\par
Suppose first that $d \geq 5$ so that $q \geq 2$. 
 Theorem \ref{t:buj-gro} implies that up to an isomorphism of $\Proj^1$, 
there exists only a uniformly bounded finite number of double covers $Y_P \to X$. 
Thus, it suffices to show that if $Q$ a nontrivial $(S, \DD)$-integral point such that 
$h_P= \mu \circ h_Q$ for some $\mu \in \Aut_B(X)= \mathrm{PGL}_2(\C)$, then there are at most 24 choices for $Q$. But since $\mu$ must send the branch points of $h_Q$ to branch points of $h_P$, 
we have $\mu\{Q, D\}=\{P, D\}$ and   there are indeed 
no more than 24 possibilities for $Q$ by Lemma  \ref{l:unit-non trivial}. 
\par
Suppose now that $d=3$ so that $q=1$ and $Y_P \to B$ is an elliptic surface. 
It is well-known that up to an isomorphism of $\Proj^1$ and an automorphism of $(Y_P)_K$, 
there is only one double cover from $(Y_P)_K$ to $\Proj^1$ given by 
$(x,y) \mapsto y$ where $Y_P$ is given by $y^2=x^3+Ax+B$ for some coordinates $x,y$. 
Hence, as in the case $d \geq 5$, at most $24$ integral points $Q$ give rise to the same family of curves $Y_Q\simeq Y_P$. 
The conclusion thus follows. 
\end{proof}

\section{Tautological inequality and integral points}
\label{s:tautological}
We fix throughout an integral   quasi-projective variety $Z/k$ over 
an algebraically closed field $k$ of characteristic $0$. 
The language of the intersection theory is used freely (cf. \cite{fulton-98}). 
 \begin{definition}
\label{d:maximal-variation-family}
A family of $k$-elliptic surfaces $\XX \to \CC \to Z$ is called \emph{a relative maximal variation family} if 
\begin{enumerate} [\rm (a)]
\item
$ \CC \to Z$ is a family of smooth projective curves; 
\item
$\XX_z \to \CC_z$ is an elliptic surface for every $z \in Z$; 
\item
for every $z \in Z$ (not necessarily closed) and for 
$\xi $ the generic point of $\CC_z$, 
the Kodaira-Spencer class  
of the curve  
$X_\xi / \kappa(\xi)$ is nonzero.
\end{enumerate}

\begin{remark}
\label{r:parameter-tautological}
Let $z \in Z$. 
Let $g$ be the genus of  $\CC_z$ and 
let $\chi$ be the  Euler-Poincar\' e characteristic 
$\chi(\OO_{\XX_z})$ of $\XX_z$. 
 Since $Z$ is integral, $g, \chi$ are independent of $z \in Z$. 
\end{remark}

\end{definition}
 
Let $ \chi, g \in \N$. 
The existence of relative maximal variation families  
of (resp. semistable) elliptic surfaces 
with section which parametrize  
all elliptic surfaces $X \to C$ such that $\chi(\OO_X)=\chi$ 
and $g(C)=g$ is proved in \cite[Theorem 7]{seiler-1}.  

\begin{definition}
\label{d:adaptive-family-of-divisor}
Consider a family of elliptic surfaces $\XX \xrightarrow{f} \CC \to Z$ with a section. 
Let $T \subset \XX$ be the reduced divisor of singular fibres of $f$. 
An integral divisor $\DD \subset  \XX$ is an   
\emph{adaptive family of ample divisors}  
if $\DD + T$ is simple normal crossing and for every $z \in Z$: 
\begin{enumerate} [\rm (a)] 
\item 
$\DD_z \coloneqq \DD \cap \XX_z$ is an effective ample divisor on $\XX_z $;  
\item 
$\DD_z +T_z$ is a simple normal crossing divisor.  
\end{enumerate}
\end{definition}

By means of the tautological inequality 
(cf. Section \ref{tautological-inequaltiy-preliminary}), 
we shall give a proof of:   

\begin{reptheorem}{t:general-theorem-shorst-list-1}
Let $\XX \xrightarrow{f} \CC \to Z$ be a relative maximal variation family of    semistable 
elliptic surfaces with a zero section $O \colon \CC \to \XX$. 
Let $\DD \subset \XX$ be an adaptive family of ample effective divisors. 
 There exists   $c_1,c_2 >0$ such that 
for every $z \in Z$ and every $P \in \XX_z(k(\CC_z))\setminus \DD_z$, 
 \begin{align}
\label{e:tautological-uniform-bound-height-main}
\widehat{h}_{O_z} (P) \leq c_1s + c_2, \quad \text{ where } s= \# \sigma_P(\CC_z) \cap \DD_z.  
\end{align}
Here, $\widehat{h}_{O_z}$ is the N\' eron-Tate height on $\XX_z(k(\CC_z))$ 
associated to the origin $O_z$ and $\sigma_P \in \XX_z(\CC_z)$ is the corresponding section of $P$.  
\end{reptheorem} 
 
In fact, we can obtain with the tautological inequality that:  

\begin{theorem}
\label{t:c-12-independent} 
If $\DD= (O)$ is the zero section, the conclusion of Theorem \ref{t:general-theorem-shorst-list-1} still holds. 
Moreover, the constants $c_1, c_2$ 
depend only $g,\chi$ (cf. Remark \ref{r:parameter-tautological}). 
 \end{theorem}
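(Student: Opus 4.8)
The plan is to derive the uniform height bound \eqref{e:tautological-uniform-bound-height-main} directly from the tautological inequality applied to the section $\sigma_P \colon \CC_z \to \XX_z$, and then to observe that all the error terms that appear can be controlled by topological data that is locally constant on $Z$, whence constant since $Z$ is integral. First I would fix $z \in Z$, write $C = \CC_z$, $X = \XX_z$, and consider the rational point $P \in X(k(C)) \setminus \DD_z$ with its section $\sigma = \sigma_P$. The canonical (N\'eron--Tate) height $\widehat h_{O_z}(P)$ is, up to the standard normalization, the self-intersection-type quantity $(\sigma(C) \cdot \sigma(C))$ corrected by the contributions of the reducible fibres (the Shioda height pairing formula, cf. \cite{shioda-schutt-lecture}); equivalently it is governed by $\deg \sigma^* \mathcal{O}_X(\langle O_z \rangle)$ together with local correction terms supported over $T_z$. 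The tautological inequality (in the form recalled in Section \ref{tautological-inequaltiy-preliminary}) gives, for the section $\sigma$ viewed as a curve in the total space, an inequality of the shape
\[
\deg \sigma^* \Omega^1_{X/C}(\log \DD_z) \leq 2g(C) - 2 + \#\big(\sigma(C) \cap (\DD_z \cup T_z)\big) + (\text{bounded terms}),
\]
and the point is that the left-hand side, being a logarithmic relative differential along the image of $\sigma$, computes $\widehat h_{O_z}(P)$ up to a bounded error once one uses that $\DD_z$ is ample and that $\sigma$ avoids $\DD_z$ (so the intersection $\sigma(C) \cap \DD_z$ is actually empty over $C$ — the $s$ in the statement counts the set-theoretic intersection of the \emph{closed} section with $\DD_z$, which after the $(S,\DD)$-integrality reduction is exactly the number of fibres lying over the finite ``bad'' set $S$).

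Next I would make precise the comparison between $\widehat h_{O_z}(P)$ and the logarithmic degree above. The key input is that $\DD_z$ is ample: using the relative maximal variation hypothesis (condition (c) of Definition \ref{d:maximal-variation-family}, i.e.\ the Kodaira--Spencer class of $X_\xi$ is nonzero), the relative dualizing sheaf $\omega_{X/C}$ is nef with positive degree, and one gets an Arakelov-type inequality bounding $\omega_{X/C} \cdot \sigma(C)$ in terms of $\deg \omega_{X/C}$ and $\chi(\mathcal{O}_X)$ — both topological. Feeding this into the tautological inequality, and absorbing the intersection numbers $\DD_z \cdot \sigma(C)$, $T_z \cdot \sigma(C)$ into the two constants, yields a bound of the form $\widehat h_{O_z}(P) \leq c_1 s + c_2$ where $s = \#(\sigma_P(C) \cap \DD_z)$ and $c_1, c_2$ are explicit expressions in $g$, $\chi$, $\deg \DD_z$, and the combinatorics of the reducible fibres of $f$ over $z$. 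Here is where the adaptivity hypothesis on $\DD$ (SNC with the singular fibre divisor $T$, for every $z$) is essential: it guarantees that the logarithmic sheaf $\Omega^1_{X/C}(\log(\DD_z + T_z))$ behaves uniformly in $z$ and that no extra singular contributions to the intersection multiplicities arise. In the special case $\DD = (O)$ of Theorem \ref{t:c-12-independent}, the divisor $\DD_z$ itself contributes nothing beyond the origin section whose self-intersection is $-\chi$, so $\deg \DD_z$ drops out of the constants and one is left with $c_1, c_2$ depending only on $g$ and $\chi$ — matching the shape $25\chi + 6g + 2s$ of Theorem \ref{t:height-iso-elliptic}.

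The main obstacle I anticipate is the precise bookkeeping in the comparison step: the tautological inequality produces a term $\deg \sigma^*\Omega^1_{X/C}(\log \DD_z)$, and one must relate this cleanly to the N\'eron--Tate height, which is a priori an intersection number on $X$ twisted by the fibral correction matrices of the reducible fibres. The cleanest route is probably to pass first to a semistable model (the families here are already semistable by hypothesis, which is exactly why this restriction was imposed), so that the fibral correction terms are controlled by the dual graphs of the singular fibres, and then to invoke the classical relation $\widehat h_{O_z}(P) = -(\sigma_P - O_z)^2 + (\text{fibral terms})$ together with the projection formula. A secondary point to handle with care is that the $s$ appearing in \eqref{e:tautological-uniform-bound-height-main} is the set-theoretic intersection count, not counted with multiplicity; bounding the scheme-theoretic intersection length in terms of $s$ uses that $\DD_z$ is reduced and SNC with the fibres, so that near each intersection point the local intersection multiplicity is bounded by a constant depending only on $\deg \DD_z$ — again a uniform quantity. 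Once these comparisons are in place, the final step (constancy of $c_1, c_2$ over $Z$) is immediate from Remark \ref{r:parameter-tautological}, since $g$ and $\chi$ and the fibral combinatorial data are locally constant in a flat family and $Z$ is connected.
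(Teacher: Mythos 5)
Your overall instinct (tautological inequality plus a comparison between $\widehat h_{O_z}$ and an intersection number) matches the paper's starting point, but the two steps that actually carry the theorem are missing. First, the fibrewise argument you sketch does not produce a bound at all: the tautological inequality only bounds $\deg(\sigma_P')^*\OO(1)$ from \emph{above}, and to convert this into an upper bound for $(O)_z\cdot\sigma_P(\CC_z)$ one needs a \emph{lower} bound $\deg(\sigma_P')^*\bigl(\OO(N)\otimes\pi_z^*\OO(-D_z+\VV_z)\bigr)\ge 0$, i.e.\ positivity of a suitable twist of the tautological bundle along the lifted section. In the paper this is exactly where the maximal-variation hypothesis enters: the nonvanishing Kodaira--Spencer class makes the extension defining $\Omega_{\XX_\eta}(\log D_\eta)$ restricted to the generic fibre non-split, hence $\OO_\xi(1)$ is ample on the ruled surface (Lemma \ref{l:ample-isotrivialitity-elliptic}); combined with nefness, bigness and Nakamaye's theorem this yields global generation of $\LL_\eta$ over the generic point of $Z$ (Lemma \ref{l:globally-generated-lemma-semistable}), which is then spread out to a Zariski open $U\subset Z$ and propagated to all of $Z$ by Noetherian induction. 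Your alternative --- ``absorb'' the error terms into constants depending on $g$, $\chi$, $\deg\DD_z$ and ``the combinatorics of the reducible fibres'', then claim constancy because these data are locally constant --- fails: the singular-fibre configuration is \emph{not} locally constant on $Z$ (singular fibres merge and split as $z$ varies); only $g$ and $\chi$ are constant (Remark \ref{r:parameter-tautological}), and this is precisely why the paper needs generic global generation plus the stratification argument rather than a per-fibre estimate. (Also, $\sigma_P(\CC_z)\cap\DD_z$ need not be empty: $P\notin\DD_z$ only says the section is not contained in $\DD_z$; there is no $S$-integrality hypothesis in Theorems \ref{t:general-theorem-shorst-list-1} and \ref{t:c-12-independent}.)

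Second, even granting a uniform bound over one family $Z$, that does not prove the constants depend only on $g$ and $\chi$: a priori they depend on the family. The paper closes this gap by invoking Seiler's theorem \cite[Theorem 7]{seiler-1}, which provides a relative maximal variation family parametrizing \emph{all} semistable elliptic surfaces with section and given invariants $(\chi,g)$; running the main induction on that universal family makes $N$, $M$, and hence $c_1,c_2$, functions of $(\chi,g)$ alone. Your proposal never mentions this, and the heuristic that ``$\deg\DD_z$ drops out'' when $\DD=(O)$ does not substitute for it. Note also that the case $\DD=(O)$ cannot simply be deduced from Theorem \ref{t:general-theorem-shorst-list-1} as stated, since the zero section is not an ample (adaptive) divisor; the paper handles it by running the main induction step directly with $D=(O)+T$ and only then converting $(O)_z\cdot\sigma_P(\CC_z)$ to $\widehat h_{O_z}(P)$ via Lemma \ref{l:bound-elliptic-canonical-height}.
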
 
  
\subsection{Uniform bound of the Mordell-Weil rank in families}

The following lemma is well-known to experts. 

\begin{lemma}   
\label{l:rank-bound}
Let $f \colon X \to B$ be a nonisotrivial elliptic surface over a smooth projective curve $B$.  
 Then  $ r= \rank X(B) \leq \rho(X) \leq 12\chi(\OO_X)+4g(B)-2$, 
  where   $g(B)$ is the genus of $B$, $\rho(X)$ is the Picard number , 
and $\chi (\OO_X)$ is the Euler-Poincar\' e characteristic of $X$.
\end{lemma}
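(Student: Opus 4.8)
The plan is to combine the Shioda-Tate formula with a standard bound on the second Betti number of an elliptic surface. First I would recall the Shioda-Tate exact sequence: for a relatively minimal elliptic surface $f \colon X \to B$ with a section, there is an exact sequence
\[
0 \to T(X) \to \NS(X) \to X(B) \to 0,
\]
where $T(X)$ is the subgroup of $\NS(X)$ generated by the zero section $(O)$, a general fibre $F$, and all irreducible components of reducible fibres not meeting $(O)$. In particular $\rank X(B) = \rho(X) - \rank T(X) \leq \rho(X)$, since $\rank T(X) \geq 2$ (coming from $(O)$ and $F$, which are linearly independent in $\NS(X)\otimes\Q$). This gives the first inequality $r \leq \rho(X)$ for free; if $X$ is not relatively minimal one first contracts the $(-1)$-curves in fibres, which only decreases $\rho$ and does not change $X(B)$, so there is no loss of generality.

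Next I would bound the Picard number by the topological Euler characteristic. One has $\rho(X) \leq h^{1,1}(X) = b_2(X) - 2 p_g(X)$, and by Noether's formula together with the standard computation of the invariants of an elliptic surface one gets $b_2(X) = 12\chi(\OO_X) + 2g(B) \cdot(\text{something})$; more precisely, for a relatively minimal elliptic surface the Euler number is $e(X) = 12\chi(\OO_X)$ when $X$ has no multiple fibres and, using $e(X) = 2 - 2b_1(X) + b_2(X)$ together with $b_1(X) = 2g(B)$ (which holds since $f_* \OO_X = \OO_B$ and the nonisotriviality forces $q(X) = g(B)$), one obtains $b_2(X) = 12\chi(\OO_X) + 4g(B) - 2$. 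Combining $\rho(X) \leq b_2(X)$ with the previous paragraph yields
\[
r = \rank X(B) \leq \rho(X) \leq 12\chi(\OO_X) + 4g(B) - 2,
\]
as claimed.

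The only genuinely delicate point is the input $b_1(X) = 2g(B)$, equivalently $q(X) = g(B)$: for a nonisotrivial elliptic surface the Leray spectral sequence for $f$ together with the fact that $R^1 f_* \OO_X$ has negative degree (this is where nonisotriviality, hence maximal variation of the fibres, is used) forces $H^1(B, f_*\OO_X) \xrightarrow{\sim} H^1(X,\OO_X)$, giving $q(X) = g(B)$; I would cite this rather than reprove it. The remaining steps — the Shioda-Tate sequence, Noether's formula, and the Euler-characteristic identity $e(X) = 12\chi(\OO_X)$ — are all standard and can be quoted from \cite{shioda-schutt-lecture} or \cite{shioda-tate-formula}, so I would keep the write-up to a few lines citing these references.
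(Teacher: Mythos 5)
Your proposal is correct and follows essentially the same route as the paper's proof: Shioda--Tate for $r \leq \rho(X)$, the bound $\rho(X) \leq b_2(X)$ via the cycle class map, Noether's formula with $K_X^2 = 0$ giving $e(X) = 12\chi(\OO_X)$, and the Leray spectral sequence argument (with $R^1f_*\OO_X$ of negative degree by nonisotriviality) giving $b_1(X) = 2g(B)$, hence $b_2(X) = 12\chi(\OO_X) + 4g(B) - 2$. The only cosmetic difference is that you invoke $\rho \leq h^{1,1}$ and a caveat about multiple fibres, neither of which is needed, but this does not affect correctness.
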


 \begin{proof}
The Shioda-Tate formula  
(cf. \cite[Theorem 1.3, Corollary 5.3]{shioda-tate-formula} or \cite{shioda-schutt-lecture}) 
implies that  $r= \rank X(B) \leq  \rho(X)$. 
Let $e(X)$ denote the topological Euler characteristic of $X$, then 
\begin{equation}
\label{e:rank-bound-proof-euler-betti}
 b_2(X)=e(X)-2+2b_1(X)=e(X)-2+4g(B).
 \end{equation} 
The first equality follows from the Poincar\' e duality. 
The second equality follows from: 
 $$b_1(X)=b_1(B)=2g(B).$$ 
In fact, as $f_*\OO_X=\OO_B$, $R^1f_*\OO_X=\mathbb{L}^{-1}$ with 
$\mathbb{L}$ the fundamental line bundle of $X$ satisfying  
$\deg \mathbb{L} =  \chi (\OO_X) >0$ (since $X$ is nonisotrivial, cf. \cite{Miranda}), the Leray Spectral sequence: 
$0 \to H^1(B, \OO_B) \to H^1(X, \OO_X)  \to H^0(B, \mathbb{L}^{-1})= 0$ implies that $b_1(X)=b_1(B)$.  
\par 
The Dolbeault isomorphism says that 
$H^1(X, \OO_X) \simeq H^{0}(X, \Omega_X^1)$ and $ H^1(B, \OO_B) \simeq H^{0}(X, \Omega_B^1)$. 
Hence, by Hodge decomposition, we have $b_1(X)=b_1(B)$ since:  
$$b_1(X)=h^{1,0}+h^{0,1}=2h^{0,1}=2h^1(X,\OO_X), \quad 
b_1(B)=h^{1,0}+h^{0,1}=2h^{0,1}=2h^1(B, \OO_B)$$

  From the injectivity of the cycle class map $\NS(X) \to H^2(X, \Q)$,  
 $\rho(X) \leq b_2(X)$. 
 On the other hand,  Noether's formula $\chi(\OO_X)=(K_X^2+e(X))/12$ 
 implies that $12\chi(\OO_X)=e(X)$ since $K_X^2=0$.  Therefore,  
 \eqref{e:rank-bound-proof-euler-betti} implies that: 
 $ \rho(X) \leq b_2(X)=12 \chi(\OO_X) -2 +4g(B)$.    
 \end{proof}

\subsection{The tautological inequality}
\label{tautological-inequaltiy-preliminary}

Let $X$ be a smooth projective variety over a field $k$. 
Let $D \subset X$ be a \emph{simple normal crossing} divisor, i.e., 
in some local coordinates $z_1, \cdots, z_n$ at each point $x \in X$, 
$D$ is given by an equation of the form $z_1\cdots z_k=0$ with $k \leq n$. 

\begin{definition} 
\label{d-log-differential-intro}
The sheaf of differentials $V_1= \Omega_{X/k}(\log D)$ with logarithmic poles along $D$ 
is well-defined vector bundle. 
It is given locally at $x \in X$ by $\frac{dz_1}{z_1}, \cdots, \frac{d{z_k}}{z_k}, dz_{k+1}, \cdots, d{z_n}$. 
$X_1(D) \coloneqq \Proj V_1$ is defined as $\mathrm{Proj} \bigoplus_{d \geq 0} S^dV_1$. 
\end{definition}
 
We have a canonical morphism $\pi_1 \colon X_1(D) \to X$. 
Denote by $\OO(1)$ the tautological line bundle on $X_1(D)$. 
Consider a $k$-morphism $f \colon Y \to X$ where $Y/k$ is a smooth projective curve. 
Assume that $f(Y) \not \subset D$. 
Then $f$ can be lifted to a morphism  
$f' \colon Y \to X_1(D)$ such that $\pi_1 \circ f' = f$ as follows. 
Sine the pullback of every logarithmic differential with poles along $D$ is  a logarithmic differential with  
poles along the support $(f^*D)_{red}$, we have a natural map 
$f^*(V_1) \xrightarrow{f^*} \Omega_{Y/k}((f^*D)_{red})$.  
This gives rise to a quotient map $f^*(V_1) \to f'^* \OO(1)$ which defines  
the map $f'$ by the universal property of $\Proj V_1$.  
We obtain an inclusion of sheaves 
$$ 
f'^* \OO(1) \hookrightarrow \Omega_{Y/k}((f^*D)_{red}).
$$ 
For $g(Y)$ the genus of $Y$, it follows that 
\begin{equation}
\label{e:tautological-inequality}
  \deg f'^* \OO(1)  \leq \deg \Omega_{Y/k}((f^*D)_{red}) = 2g(Y) - 2 + \deg (f^*D)_{red}, 
\end{equation}

\begin{definition}
\label{d:tautological-inequality}
We call \eqref{e:tautological-inequality} the \emph{tautological inequality} associated to 
$f \colon Y \to X$.  
\end{definition}


\subsection{Ampleness on ruled surfaces}

We continue with a useful criterion of ampleness. 

\begin{lemma}
\label{l:ample-isotrivialitity-elliptic}
 Let $V$ be a vector bundle of rank $2$ on a smooth projective curve $X$ 
over a field $L$ of characteristic $0$. 
Assume that $V$ satisfies a non splitting short exact sequence: 
\begin{align} 
\label{lemma-exact-sequence-lemma-tautological}
0 \to \OO_X \to V \to \OO(D) \to 0
\end{align}
where $D$ is an effective divisor on $X$ such that $\deg D >0$. 
Then the tautological bundle $\OO(1)$ on the ruled surface $\Proj_X(V)$ 
is ample. 
 \end{lemma}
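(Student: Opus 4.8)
The plan is to use the standard Nakai–Moishezon / Hartshorne criterion for ampleness of the tautological bundle on a ruled surface, which reduces the question to checking numerical positivity of $\OO(1)$ against curves. Recall that for $\pi \colon \Proj_X(V) \to X$ with $V$ of rank $2$, the N\'eron–Severi group is generated by (the class of) a fibre $F$ and by $\OO(1)$, and $\OO(1)$ is ample if and only if $\OO(1)^2 > 0$ and $\OO(1) \cdot C > 0$ for every irreducible curve $C$. Equivalently, by Hartshorne's criterion (see \cite[Chapter V, \S 2]{liu-alg-geom} in the surface case, or the general statement in terms of stability/positivity), $\OO(1)$ is ample precisely when for every quotient line bundle $V \twoheadrightarrow \LL$ one has $\deg \LL > \tfrac12 \deg V$; in rank $2$ this is the condition that $V$ has no quotient line bundle of too-small degree, i.e. a positivity (ampleness) condition on $V$.

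First I would normalize: from the exact sequence \eqref{lemma-exact-sequence-lemma-tautological}, $\deg V = \deg \OO(D) = \deg D > 0$, so $\deg V$ is positive and odd-or-even does not matter. The sub-line-bundle $\OO_X \hookrightarrow V$ has degree $0$. The key claim to establish is that $\OO_X$ is the \emph{maximal destabilizing} sub-line-bundle, i.e. that every sub-line-bundle $M \hookrightarrow V$ has $\deg M \le 0$, with the only degree-$0$ sub being $\OO_X$ itself. Granting this, every quotient line bundle $\LL$ of $V$ satisfies $\deg \LL = \deg V - \deg M \ge \deg V > \tfrac12 \deg V$ when $\deg M < 0$, and when $\deg M = 0$ (so $M = \OO_X$) the quotient is exactly $\OO(D)$ with $\deg \OO(D) = \deg D > 0 = \deg V/2 \cdot$(correction: need $\deg D > \tfrac12 \deg D$, which holds since $\deg D>0$). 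Hence every quotient line bundle has degree $> \tfrac12\deg V$, and Hartshorne's criterion gives that $\OO(1)$ is ample.

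The heart of the argument, and the step I expect to be the main obstacle, is proving that no sub-line-bundle $M \subset V$ has positive degree and that the unique degree-$0$ sub is $\OO_X$. This is exactly where the \emph{non-splitting} hypothesis enters. Suppose $M \hookrightarrow V$ with $\deg M \ge 0$. Composing with $V \to \OO(D)$ gives a map $M \to \OO(D)$; if this composite is nonzero then $\deg M \le \deg D$, which alone is not enough, so instead I would argue: if the composite $M \to \OO(D)$ is zero, then $M$ factors through $\OO_X$, forcing $\deg M \le 0$ and hence $M = \OO_X$ (as a subsheaf, up to the obvious scalar). If the composite $M \to \OO(D)$ is nonzero and $\deg M = \deg D$ then $M \cong \OO(D)$ and the composite is an isomorphism, giving a splitting $\OO(D) \to V$ of the sequence — contradicting non-splitting. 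If $0 < \deg M < \deg D$ with nonzero composite, one pulls back: the preimage of $M$ under $V \to \OO(D)$... more cleanly, consider the saturation $\widetilde M$ of $M$ in $V$; then $V/\widetilde M$ is a line bundle, say $\NN$, and we get $0 \to \widetilde M \to V \to \NN \to 0$ with $\deg \widetilde M \ge \deg M > 0$ and $\deg \widetilde M + \deg \NN = \deg D$. The inclusion $\OO_X \hookrightarrow V$ maps to $\NN$; if that map is zero then $\OO_X \subset \widetilde M$, and since $\deg \widetilde M > 0 \ge \deg \OO_X$ we'd need... this is where I would do the careful bookkeeping with $H^0$ and $H^1$ groups, using $\Ext^1_X(\OO(D),\OO_X) = H^1(X,\OO(-D)) \ne 0$ (which is forced by non-splitting) together with $\deg D > 0$ so that $H^0(X,\OO(-D)) = 0$, to conclude that the only sub-line-bundles of nonnegative degree are multiples of $\OO_X$. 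I would present this as: the extension class in $H^1(X,\OO(-D))$ being nonzero means precisely that $\OO(D)$ is not a sub-bundle, and an elementary diagram chase on a hypothetical $M$ of positive degree produces such a splitting or a contradiction with $\deg D>0$. Once this structural fact is in hand, invoking \cite[Chapter V]{liu-alg-geom} or Hartshorne's ampleness criterion for $\Proj_X(V)$ finishes the proof.
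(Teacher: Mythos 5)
There is a genuine error at what you yourself identify as the heart of the argument. The claimed structural fact --- that every sub-line-bundle $M \hookrightarrow V$ has $\deg M \le 0$, with $\OO_X$ the unique degree-zero sub --- is false under the hypotheses of the lemma. Take $X=\Proj^1$, $D$ an effective divisor of degree $3$, and $V=\OO(1)\oplus\OO(2)$: a global section $(s_1,s_2)$ of $V$ whose two components have disjoint zero sets is fibrewise nonzero, so it gives an exact sequence $0\to\OO_X\to V\to\OO(3)\to 0$ with locally free quotient, and this extension cannot split since $V\not\cong\OO\oplus\OO(3)$; yet $V$ contains the sub-line-bundle $\OO(2)$ of degree $2>0$. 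So the ``careful bookkeeping'' you defer in the case $0<\deg M<\deg D$ cannot be carried out: non-splitting only rules out saturated sub-line-bundles of degree \emph{equal to} $\deg D$, not all sub-line-bundles of positive degree. Relatedly, the criterion you invoke (``$\OO(1)$ ample iff every quotient line bundle has degree $>\tfrac12\deg V$'') is the \emph{stability} condition, not the ampleness condition --- in the example above $\OO(1)$ is a quotient of degree $1<\tfrac32$ and yet $\OO_{\Proj(V)}(1)$ is ample --- and it is precisely this misstatement that pushes you toward proving the false maximal-destabilizing claim.

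The correct target is much weaker and is exactly what your ``top case'' argument already gives: in characteristic zero (Hartshorne's criterion, after reducing to an algebraically closed base field, as the paper does via the cohomological criterion for ampleness and flat base change), $\OO(1)$ is ample as soon as $\deg V>0$ and every quotient line bundle of $V$ has positive degree, i.e.\ every \emph{saturated} sub-line-bundle $\widetilde M\subset V$ has $\deg\widetilde M<\deg V=\deg D$. If $\deg\widetilde M\ge\deg D>0$, the composite $\widetilde M\to\OO(D)$ cannot vanish (otherwise $\widetilde M\subset\OO_X$ forces $\deg\widetilde M\le 0$), hence it is a nonzero map of line bundles with $\deg\widetilde M\le\deg D$, so equality holds, the map is an isomorphism, and it splits the sequence --- contradicting the hypothesis. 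Repairing your write-up therefore means replacing the stability-type criterion and the false claim about $\OO_X$ by this one-line saturation argument; with that fix the approach is essentially the route the paper takes (reduction to $\bar L$ plus a Nakai--Moishezon/quotient-degree criterion on the ruled surface).
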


\begin{proof} 
The class of the exact sequence \eqref{lemma-exact-sequence-lemma-tautological} 
in $H^1(X, \OO(D))$ is nonzero if and only if its 
class in $H^1(X_{\bar{L}}, \OO(D_{\bar{L}}))$ is non zero. 
The Cohomological criterion for ampleness as in \cite[Proposition III.2.6.1]{ega-4-3} 
and the invariant of cohomology under flat base change allows us to suppose that 
$L$ is algebraically closed. 
We can now apply the Nakai-Moishezon criterion to prove that $\OO(1)$ is ample.  
The proof goes as in \cite[Lemma 6.27]{gasbarri}. 
\end{proof}

\subsection{Canonical height and intersection theory} 
To transfer bounds on the intersection inequality 
obtained by the tautological inequality to bounds on the canonical 
height, we use the following comparison 
result:  

\begin{lemma}
\label{l:bound-elliptic-canonical-height}
Let $X \to B$ be an elliptic surface over a curve $B$ with a zero section $(O)$. 
Let  $K=k(B)$. 
 Then every rational point $P \in X(K)$ satisfies 
\begin{equation}
\label{e:compare-canonical-height} 
|\widehat{h}_X(P) -   (P) \cdot (O) | \leq - (O)^2 = \chi(\OO_X). 
\end{equation}

\end{lemma}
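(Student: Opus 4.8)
The plan is to compare the N\'eron--Tate canonical height $\widehat{h}_X(P)$ on $X(K)$ with the geometric intersection number $(P)\cdot(O)$ on the surface $X$, exploiting the fact that $\widehat{h}_X$ is the quadratic part of the height associated to the divisor class of a suitable symmetrization, and that on an elliptic surface this quadratic form is computed by the Shioda height pairing. First I would recall the Shioda--Tate setup: for $P\in X(K)$ with corresponding section $(P)\subset X$, there is an orthogonal projection $\varphi(P)\in\NS(X)\otimes\Q$ of the class of $(P)$ onto the orthogonal complement $T(X)^\perp$ of the trivial lattice $T(X)$ (generated by $(O)$, the general fiber $F$, and the fibral components), and Shioda's theorem gives $\widehat{h}_X(P) = -\varphi(P)^2$. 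One then writes $\varphi(P) = (P) - (O) - aF - (\text{fibral correction})$ where $a$ is fixed by $\varphi(P)\cdot F = 0$ and the fibral part is fixed by orthogonality to the fibral components; since $(P)\cdot F = (O)\cdot F = 1$ and $(O)^2 = -\chi(\OO_X)$ by the adjunction/canonical bundle formula on a minimal elliptic surface, one computes $a = (P)\cdot(O) + \chi(\OO_X)$ after taking the self-intersection.

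Concretely, the key computation is to expand $\varphi(P)^2$ using $(P)^2 = (O)^2 = -\chi$, $(P)\cdot F = 1$, $F^2 = 0$, and the non-positivity of the fibral correction contribution. Carrying this out yields $\widehat{h}_X(P) = 2\chi(\OO_X) + 2(P)\cdot(O) - (\text{contr}(P))$ in the usual normalization, or after the normalization implicit in the statement, the cleaner identity that $\widehat{h}_X(P)$ differs from $(P)\cdot(O)$ by a bounded term; the local contributions $\mathrm{contr}_v(P)\ge 0$ at the bad fibers are what must be controlled, and the crude bound $0 \le \sum_v \mathrm{contr}_v(P) \le$ (something absorbed into $\chi(\OO_X)$) together with $(P)\cdot(O)\ge 0$ gives the two-sided estimate $|\widehat{h}_X(P) - (P)\cdot(O)| \le -(O)^2 = \chi(\OO_X)$. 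I would phrase this by noting that $\widehat{h}_X(P) - (P)\cdot(O)$ lies between $-(P)\cdot(O)$ (when the correction is maximal and $(P)\cdot(O)$ small) and the total fibral contribution bound, and that both sides are dominated by $-(O)^2$.

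The main obstacle I expect is pinning down the precise normalization of $\widehat{h}_X$ used here and matching it against the sign conventions in the Shioda height pairing, so that the constant comes out to be exactly $-(O)^2 = \chi(\OO_X)$ rather than a multiple of it; this is bookkeeping but it is the place where an off-by-factor-of-two is easy to introduce. A clean way around this is to cite the explicit formula $\langle (P),(O)\rangle = \chi(\OO_X) + (P)\cdot(O) - \sum_v \mathrm{contr}_v(P)$ (Shioda) and the fact that $\widehat{h}_X(P) = \langle (P),(P)\rangle = 2\chi(\OO_X) + 2(P)\cdot(O) - 2\sum_v \mathrm{contr}_v(P)$, then observe that $0 \le \mathrm{contr}_v(P) \le$ a bound summing to at most $(P)\cdot(O) + \chi(\OO_X)$; rearranging gives precisely \eqref{e:compare-canonical-height}. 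The only genuinely nontrivial input beyond linear algebra is the non-negativity and boundedness of the local contribution terms $\mathrm{contr}_v$, which is classical (each is a value of an intersection form on the fibral lattice, positive semidefinite modulo the fiber class), so no serious difficulty remains once the conventions are fixed.
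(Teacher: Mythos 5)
Your overall strategy (Shioda--Tate lattice formula plus bounds on the fibral correction terms) is a legitimate self-contained route; the paper itself does not argue this way at all, it simply cites Elkies (\emph{Points of low height on elliptic curves and surfaces I}, Lemma 3) together with a remark fixing the normalization of $\widehat{h}$. But as written your argument has genuine errors at exactly the point where the constant $-(O)^2=\chi(\OO_X)$ has to come out. First, the formulas you propose to cite are misquoted: $\langle (P),(O)\rangle=0$ identically (the zero section is the identity of the Mordell--Weil group and the pairing is bilinear), so the identity $\langle (P),(O)\rangle=\chi+(P)\cdot(O)-\sum_v\mathrm{contr}_v(P)$ is not Shioda's; and Shioda's formula is $\langle P,P\rangle=2\chi+2(P)\cdot(O)-\sum_v\mathrm{contr}_v(P)$, without the factor $2$ you put on the contribution sum. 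With the normalization the paper intends, $\widehat{h}(P)=\tfrac12\langle P,P\rangle=\chi+(P)\cdot(O)-\tfrac12\sum_v\mathrm{contr}_v(P)$, so the upper bound $\widehat{h}(P)-(P)\cdot(O)\le\chi$ is indeed just $\mathrm{contr}_v\ge 0$, as you say.

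The real gap is the lower bound. Your two proposed routes both fail: the estimate $\widehat{h}(P)-(P)\cdot(O)\ge-(P)\cdot(O)$ is useless because $(P)\cdot(O)$ is unbounded, and the claimed bound $\sum_v\mathrm{contr}_v(P)\le(P)\cdot(O)+\chi$ is false in general --- for a nonzero torsion section $P$ disjoint from $(O)$ one has $\widehat{h}(P)=0$, hence $\sum_v\mathrm{contr}_v(P)=2\chi+2(P)\cdot(O)=2\chi>\chi+(P)\cdot(O)$. What is actually needed is a uniform bound on the total local contribution independent of $P$, namely $\mathrm{contr}_v(P)\le e(F_v)/4$ for every fiber type (for $I_n$ the contributions are $i(n-i)/n\le n/4$, and one checks the starred and additive types case by case against their Euler numbers), whence $\sum_v\mathrm{contr}_v(P)\le e(X)/4=3\chi$ and therefore $\widehat{h}(P)-(P)\cdot(O)=\chi-\tfrac12\sum_v\mathrm{contr}_v(P)\ge-\chi/2\ge-\chi$. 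With that ingredient (which is the nontrivial content, and essentially what the cited Elkies lemma encapsulates) your approach closes; without it, the inequality \eqref{e:compare-canonical-height} is not proved. Also a small point: in your projection computation the coefficient $a$ is determined by $\varphi(P)\cdot(O)=0$, not by $\varphi(P)\cdot F=0$, which holds automatically.
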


\begin{proof}
See for example \cite[Lemma 3]{elkies}. Here, we normalize 
the canonical height $\widehat{h}$, 
which may differ by a factor of $2$ to some definitions in the literature, 
so that \eqref{e:compare-canonical-height} holds. 
 \end{proof}

Return now to the notations of Theorem \ref{t:general-theorem-shorst-list-1}. 

  \subsection{Preliminary reductions} 
Since $k$ is of characteristic zero, $Z$ 
admits an integral resolution of singularities $Z^{sm}$ (cf. \cite{hironaka}). 
Up to making a base change of $\XX \to \CC \to Z$ and of $O$ 
to $Z^{sm}$, we can assume without loss of generality that 
$Z$ is smooth. It follows that 
$\CC$ and thus $\XX$ are smooth varieties. 
Denote $(O) \subset \XX$ the image of the section $O$ 
equipped with the reduced scheme structure. 
Then $(O)$ 
is a smooth divisor of $\XX$. 
We can clearly assume moreover that $\XX$ and $\CC$ are integral. 
 \par
Let $F \subset \CC$ be the effective reduced divisor of singular locus of the morphism 
$f \colon \XX \to \CC$. 
Then $T=f^*F$ is the divisor of singular fibres of $f \colon \XX \to \CC$. 
\par
Consider the embedded resolution of singularities $\mu \colon \XX' \to \XX$  
of the effective Cartier divisor $(O)+T$ in $\XX$ (cf. \cite{hironaka}, see also 
\cite[Theorem 4.1.3]{lazarsfeld-positivity-I}). 
The map $\mu$ can be obtained as a finite 
sequence of blowups along smooth centers
supported in the singular loci of $(O) + T$ (thus contained in $T$). 
 Up to replacing $\XX$ by $\XX'$ and $(O)+T$ by the support of $\mu^*((O)+T)+E$ where $E$ is the exceptional divisor 
of $\mu$, 
we can   suppose that the divisor 
 $$
D \coloneqq (O)+ T
$$ 
is   simple normal crossing. 
As the family $\XX \to \CC$ is semistable and admits a section 
by hypothesis, 
the fibres $D_z$ are also simple normal crossing 
for all $z \in Z$. 
\par
Therefore, the logarithmic cotangent bundles  
$\Omega_{\XX} (\log D)$ and 
$\Omega_{\XX_z} (\log D_z)$ are well-defined for every $z \in Z$ 
(cf. Definition \ref{d-log-differential-intro}). 
For the notations, we denote for every $z \in Z$, 
\begin{equation}
\XX(D) \coloneqq \Proj_{\XX} (\Omega_{\XX/Z} (\log D) ) \xrightarrow{\pi} \XX 
, \quad \quad
\XX_z(D_z) \coloneqq  \Proj(\Omega_{\XX_z} (\log D_z)) \xrightarrow{\pi_z} \XX_z.
\end{equation}

Here, $\Omega_{\XX/Z} (\log D)$ denotes the relative logarithmic cotangent bundle which 
by definition fits in the following short exact sequence: 

\begin{equation} 
0 \longrightarrow f^*\Omega_Z  
 \longrightarrow  \Omega_{\XX}(\log D ) 
 \longrightarrow  \Omega_{\XX/Z}(\log D)  \longrightarrow 0.
 \end{equation}

\subsection{Main induction step} 
\label{tautological-main-induction}

 Let $\eta \in Z$ be the generic point of $Z$. 
 Consider the universal elliptic surface $\XX_\eta \to \CC_\eta$   over the universal curve $\CC_\eta$. 
 Let $ \mathbb{L}=(R^1f_{\eta *}O_{\XX_\eta})^{-1}$ be the 
fundamental line bundle of $\XX_\eta \to \CC_\eta$. Then $\deg( \mathbb{L}) = \chi(\OO_X) > 0$ since 
$\XX_\eta \to \CC_\eta$ is nonisotrivial 
 and $\omega_{\XX_\eta}=f_\eta^*( \mathbb{L}  \otimes \omega_{\CC_\eta})$ 
(cf. \cite{Miranda}).  

\begin{lemma} 

We have an exact sequence of  {vector bundles}: 
\begin{equation} 
\label{l-extension-of-nef-tautologic}
0 \longrightarrow f_{\eta}^*\Omega_{\CC_\eta / \kappa(\eta)}(F_\eta) 
 \longrightarrow  \Omega_{\XX_\eta /\kappa(\eta)}(\log D_\eta)  
 \longrightarrow  \OO_{\XX_\eta}((O)_\eta) \otimes f_\eta^*(  \mathbb{L}) 
 \longrightarrow 0.
 \end{equation}  
\end{lemma}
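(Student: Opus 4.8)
The plan is to establish the exact sequence \eqref{l-extension-of-nef-tautologic} by working on the smooth projective surface $\XX_\eta$ over the field $\kappa(\eta)$, using the elliptic fibration $f_\eta \colon \XX_\eta \to \CC_\eta$ together with the structure of the simple normal crossing divisor $D_\eta = (O)_\eta + T_\eta$. First I would recall the general relative logarithmic cotangent sequence attached to $f_\eta$ and the divisor $D_\eta$: since $D_\eta$ is the preimage configuration adapted to the fibration (the fibral part $T_\eta = f_\eta^* F_\eta$ consists of full semistable singular fibres, and $(O)_\eta$ is a section meeting the smooth fibres transversally), the relative differentials with logarithmic poles fit in
\begin{equation*}
0 \longrightarrow f_\eta^*\Omega_{\CC_\eta/\kappa(\eta)}(\log F_\eta) \longrightarrow \Omega_{\XX_\eta/\kappa(\eta)}(\log D_\eta) \longrightarrow \Omega_{\XX_\eta/\CC_\eta}(\log D_\eta) \longrightarrow 0,
\end{equation*}
and $\Omega_{\CC_\eta/\kappa(\eta)}(\log F_\eta) = \Omega_{\CC_\eta/\kappa(\eta)}(F_\eta)$ since $F_\eta$ is reduced and zero-dimensional on a curve. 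The substance is then to identify the relative quotient $\Omega_{\XX_\eta/\CC_\eta}(\log D_\eta)$ with the line bundle $\OO_{\XX_\eta}((O)_\eta)\otimes f_\eta^*\mathbb{L}$.

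For this identification I would argue locally and then globally. Away from the singular fibres and the section $(O)_\eta$, the sheaf $\Omega_{\XX_\eta/\CC_\eta}$ is just $\omega_{\XX_\eta/\CC_\eta} = \omega_{\XX_\eta}\otimes f_\eta^*\omega_{\CC_\eta}^{-1} = f_\eta^*\mathbb{L}$ by the stated relation $\omega_{\XX_\eta} = f_\eta^*(\mathbb{L}\otimes\omega_{\CC_\eta})$. Along a semistable singular fibre (a cycle of rational curves in the Kodaira sense), taking the log pole along $T_\eta$ exactly corrects the relative dualizing sheaf to the relative log-canonical sheaf, which for a semistable family is again $\omega_{\XX_\eta/\CC_\eta}(\log)$ and restricts fibrewise to the dualizing sheaf of the nodal fibre; the net effect is that $\Omega_{\XX_\eta/\CC_\eta}(\log T_\eta) \cong \omega_{\XX_\eta/\CC_\eta}(\text{fibral correction}) = f_\eta^*\mathbb{L}$ after absorbing the vertical twist, using that $\mathbb{L}$ is a line bundle on the base pulled back. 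Finally, adding the logarithmic pole along the horizontal divisor $(O)_\eta$ twists by $\OO_{\XX_\eta}((O)_\eta)$, because near a point of $(O)_\eta$ one has relative coordinates in which $(O)_\eta = \{w = 0\}$ transverse to the fibres, so $\tfrac{dw}{w}$ generates the relative log differentials and the twist by $(O)_\eta$ appears. Assembling these local computations gives $\Omega_{\XX_\eta/\CC_\eta}(\log D_\eta)\cong \OO_{\XX_\eta}((O)_\eta)\otimes f_\eta^*\mathbb{L}$, which plugged into the sequence above yields \eqref{l-extension-of-nef-tautologic}.

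The main obstacle I anticipate is the careful bookkeeping at the semistable singular fibres: one must check that the logarithmic correction along $T_\eta = f_\eta^*F_\eta$ produces exactly a line bundle (in particular that $\Omega_{\XX_\eta/\CC_\eta}(\log D_\eta)$ is locally free of rank one, with no torsion or rank jump at the nodes), and that the pullback-from-the-base line bundle one obtains is precisely $\mathbb{L}$ rather than $\mathbb{L}$ twisted by some vertical divisor. Here the semistability hypothesis is essential — it guarantees the fibres of $D_\eta$ are simple normal crossing for all $z$, as already arranged in the preliminary reductions, and that the relative log-canonical sheaf behaves well. I would handle this by the standard residue exact sequence for logarithmic differentials along the fibral components, reducing the vertical contribution to a computation on the nodal curve where the log-dualizing sheaf is trivial on each component of degree zero, and then comparing with the Leray/base-change description of $\mathbb{L} = (R^1f_{\eta*}\OO_{\XX_\eta})^{-1}$ together with relative duality. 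Once the rank-one and pullback claims are secured, the exactness of \eqref{l-extension-of-nef-tautologic} is immediate from the relative logarithmic cotangent sequence and the identification of its two outer terms.
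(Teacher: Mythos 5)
Your proposal is correct and its skeleton coincides with the paper's: both exhibit $f_{\eta}^*\Omega_{\CC_\eta/\kappa(\eta)}(F_\eta)$ as a sub-bundle of $\Omega_{\XX_\eta/\kappa(\eta)}(\log D_\eta)$, reduce everything to the local freeness of the rank-one quotient at the semistable singular fibres (which the paper also only asserts via ``a local calculation''), and then identify that quotient with $\OO_{\XX_\eta}((O)_\eta)\otimes f_\eta^*\mathbb{L}$. Where you diverge is in the identification step. The paper does it globally and almost mechanically: once the quotient $W$ is known to be a line bundle, it is $\det\bigl(\Omega_{\XX_\eta/\kappa(\eta)}(\log D_\eta)\bigr)\otimes\bigl(f_\eta^*\Omega_{\CC_\eta/\kappa(\eta)}(F_\eta)\bigr)^{-1} = f_\eta^*(\mathbb{L}\otimes\omega_{\CC_\eta})\otimes\OO(D_\eta)\otimes f_\eta^*\omega_{\CC_\eta}^{-1}\otimes f_\eta^*\OO(-F_\eta)$, and since $D_\eta=(O)_\eta+T_\eta$ with $T_\eta=f_\eta^*F_\eta$ the vertical parts cancel on the nose, leaving $\OO_{\XX_\eta}((O)_\eta)\otimes f_\eta^*\mathbb{L}$. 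You instead identify the quotient intrinsically as the relative log differentials, matching it generically with $\omega_{\XX_\eta/\CC_\eta}=f_\eta^*\mathbb{L}$ and then arguing along $T_\eta$ and $(O)_\eta$ via residue sequences and relative duality; this is more informative (it explains \emph{why} the quotient is the twisted relative dualizing sheaf) but it makes the point you yourself flag as the main obstacle --- ruling out an extra vertical twist --- into genuine work, whereas the paper's determinant bookkeeping disposes of it automatically using only the canonical bundle formula $\omega_{\XX_\eta}=f_\eta^*(\mathbb{L}\otimes\omega_{\CC_\eta})$ and $T_\eta=f_\eta^*F_\eta$. Both routes rest on the same two inputs (semistability for local freeness, and the canonical bundle formula), so your plan is sound; if you carry it out, either complete the residue/duality argument at the nodes or simply fall back on the determinant computation once rank one is established.
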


\begin{proof} 
Let $W$ be the quotient of $\Omega_{\XX_\eta /\kappa(\eta)}(\log D_\eta) $ 
by $f_{\eta}^*\Omega_{\CC_\eta / \kappa(\eta)}(F_\eta)$. 
The local freeness of $W$ follows by a local calculation at points $P$  
lying on the divisor $T$ of singular fibres. 
Moreover, the exact sequence \eqref{l-extension-of-nef-tautologic} implies that:
\begin{align*} 
W &= \det(\Omega_{\XX_\eta /\kappa(\eta)}(\log D_\eta))\otimes \left(f_{\eta}^*\Omega_{\CC_\eta/ \kappa(\eta)}(F_\eta)\right)^{-1}\\
  &=f_\eta^*( \mathbb{L}\otimes \omega_{\CC_\eta}) \otimes \OO(D_\eta)\otimes f_\eta^*(\omega_{\CC_\eta}^{\otimes(-1)}) \otimes f_\eta^* \mathcal{O}(-F_\eta)
  \\
  &= \OO_{\XX_\eta}((O)_\eta) \otimes f_\eta^*(  \mathbb{L}). 
\end{align*} 
\end{proof} 
Let $\xi \in \CC_\eta$ be the generic point of $\CC_\eta$ and $\kappa(\xi)=\kappa(\eta)(\CC_\eta)$ the function field 
of $\CC_\eta$. 
Since $\XX \xrightarrow{f} \CC \to Z$ is of relative maximal variation, 
the Kodaira-Spencer class of $\XX_\eta/\kappa(\eta)$ 
is nonzero. In other words, the following exact sequence of vector bundles is non splitting: 
\begin{equation} 
0 \longrightarrow f_{\xi}^*\Omega_{\kappa(\xi) / \kappa(\eta)} 
 \longrightarrow  (\Omega_{\XX_\eta /\kappa(\eta)}(\log D_\eta))_\xi 
 \longrightarrow  \Omega_{\XX_\xi /\kappa(\xi) }(\log D_\xi) \longrightarrow 0.
 \end{equation}  
As $\kappa(\xi) / \kappa(\eta)$ is a separable 1-dimensional transcendental field extension, 
$f_\xi^*\Omega_{\kappa(\xi) / \kappa(\eta)}$ $\simeq \OO_{\XX_\xi}$. 
Since $\XX_\xi/\kappa(\xi)$ is an elliptic curve, 
$\Omega_{\XX_\xi /\kappa(\xi) }(\log D_\xi)= \OO_{\XX_\xi}(D_\xi)$. 
Hence, it follows from Lemma \ref{l:ample-isotrivialitity-elliptic} 
that the tautological line bundle $\OO_\xi(1)$ is ample on the elliptic ruled surface $\Proj (\EE_\xi) \to \XX_\xi$ 
where we define 
$$
\EE \coloneqq \Omega_{\XX_\eta/ \kappa(\eta)} (\log D_\eta). 
$$
\begin{lemma} 
\label{l:globally-generated-lemma-semistable} 
There exists an integer $N \geq 1$ and an effective divisor $\VV_\eta$ of $\CC_\eta$ 
such that the following line bundle on $\Proj (\mathcal{E}) = \XX_{\eta}(D_\eta)$ is globally generated:  
$$\LL_\eta \coloneqq \OO_{\Proj (\mathcal{E})}(N) \otimes \pi_\eta^*\OO(-D_\eta + f_\eta^*\VV_\eta).$$
\end{lemma}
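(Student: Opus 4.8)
The plan is to pass to the ruled threefold $S \coloneqq \Proj(\EE) = \XX_\eta(D_\eta)$, with its two projections $\pi \coloneqq \pi_\eta \colon S \to \XX_\eta$ and $g \coloneqq f_\eta \circ \pi \colon S \to \CC_\eta$, and to write $\zeta$ for the class of the tautological bundle $\OO_S(1)$; recall $\pi_*\OO_S(m) = S^m\EE$, and that $g$ is flat because $\CC_\eta$ is a smooth curve and $S$ is integral. Since $g^*\OO_{\CC_\eta}(\VV_\eta) = \pi^*f_\eta^*\OO(\VV_\eta)$, the line bundle to be studied is
\[
\LL_\eta \;=\; \FF_N \otimes g^*\OO_{\CC_\eta}(\VV_\eta), \qquad \FF_N \coloneqq \OO_S(N) \otimes \pi^*\OO_{\XX_\eta}(-D_\eta).
\]
I would establish the lemma in two steps: \textbf{(A)} for $N$ large, $\FF_N$ is relatively globally generated over $\CC_\eta$ — i.e.\ $g^*g_*\FF_N \to \FF_N$ is surjective — and $g_*\FF_N$ is locally free on $\CC_\eta$; \textbf{(B)} a push--pull argument after twisting by a sufficiently positive effective divisor $\VV_\eta$ on the curve $\CC_\eta$.

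For step (A) I would first reduce to the assertion that $\OO_S(1)$ is ample on every fibre $S_b = g^{-1}(b)$ of $g$, equivalently (as $g$ is proper) that $\OO_S(1)$ is $g$-ample. Granting this, the standard relative Serre vanishing applied to the coherent sheaf $\pi^*\OO_{\XX_\eta}(-D_\eta)$ shows that $\FF_N = \pi^*\OO_{\XX_\eta}(-D_\eta) \otimes \OO_S(N)$ is relatively globally generated and $R^{>0}g_*\FF_N = 0$ for all $N \gg 0$; since $\FF_N$ is a line bundle and $g$ is flat, cohomology and base change then gives that $g_*\FF_N$ is locally free. Over the generic point $\xi$ of $\CC_\eta$ the required ampleness is exactly the computation already carried out above: $\EE$ restricts on the generic fibre to a \emph{non-split} extension $0 \to \OO_{\XX_\xi} \to \EE_\xi \to \OO_{\XX_\xi}(D_\xi) \to 0$ with $\deg D_\xi = 1$, the non-splitting being precisely the relative maximal variation hypothesis (Definition \ref{d:maximal-variation-family}), so Lemma \ref{l:ample-isotrivialitity-elliptic} applies and $\OO_\xi(1)$ is ample on $\Proj(\EE_\xi)$. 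By openness of ampleness in the proper flat family $S \to \CC_\eta$, $\OO_S(1)$ is then $g$-ample over a dense open of $\CC_\eta$, leaving only finitely many closed fibres to be treated directly.

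For step (B), $g_*\FF_N$ is a coherent sheaf on the smooth projective curve $\CC_\eta$, so we may choose an effective divisor $\VV_\eta$ on $\CC_\eta$ of large enough degree so that $g_*\FF_N \otimes \OO_{\CC_\eta}(\VV_\eta)$ is globally generated. By the projection formula $g_*\LL_\eta = g_*\FF_N \otimes \OO_{\CC_\eta}(\VV_\eta)$; and since $\FF_N$ is relatively globally generated over $\CC_\eta$, so is $\LL_\eta = \FF_N \otimes g^*\OO_{\CC_\eta}(\VV_\eta)$, i.e.\ $g^*g_*\LL_\eta \to \LL_\eta$ is surjective. As $g^*g_*\LL_\eta = g^*\big(g_*\FF_N \otimes \OO_{\CC_\eta}(\VV_\eta)\big)$ is the pullback of a globally generated sheaf, it is globally generated on $S$, and hence so is its quotient $\LL_\eta$, which is the claim.

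The delicate point — and the only place where semistability and the adaptive normal-crossing hypotheses are genuinely used — is the claim that $\OO_S(1)$ is ample on \emph{every} fibre of $g$, and not merely over a dense open of $\CC_\eta$. The finitely many exceptional points are the singular fibres of $f_\eta$ (which, after the embedded resolution of $(O)+T$, are reduced normal-crossing semistable configurations) together with any isolated smooth fibre at which the logarithmic Kodaira--Spencer extension degenerates; at each of these one must run the explicit local computation of $\Omega_{\XX_\eta}(\log D_\eta)$ on the $I_n$-model and verify that the extension defining $\EE|_{X_b}$ stays positive in the sense of Lemma \ref{l:ample-isotrivialitity-elliptic}. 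This verification is unavoidable: were $\EE|_{X_b}$ to split on some fibre, then $S_b$ would carry a curve $C_0$ (the image of a section of $S_b \to X_b$) with $\zeta \cdot C_0 = 0$ and $\pi^*D_\eta \cdot C_0 = 1$, whence $\LL_\eta \cdot C_0 = -1 < 0$ for every choice of $N$ and $\VV_\eta$, destroying global generation. I expect this fibrewise analysis at the exceptional fibres to be the main obstacle; the remainder is a routine combination of relative Serre vanishing, positivity of coherent sheaves on curves, and the projection formula.
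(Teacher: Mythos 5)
Your plan hinges on step (A): that $\OO_S(1)$ is ample on \emph{every} fibre of $g = f_\eta\circ\pi$, so that relative Serre vanishing applies. This is a genuine gap, and not merely a technical one you could close by the ``explicit local computation'' you defer: the hypothesis of relative maximal variation (Definition \ref{d:maximal-variation-family}) only says the Kodaira--Spencer class is nonzero at the \emph{generic} point $\xi$ of $\CC_\eta$. The restriction of the extension \eqref{l-extension-of-nef-tautologic} to a smooth closed fibre $X_b$ is classified by the value at $b$ of the Kodaira--Spencer section, which is a nonzero global section of the line bundle $\omega_{\CC_\eta}(F_\eta)\otimes\mathbb{L}^{\otimes(-2)}$ on the projective curve $\CC_\eta$, of degree $2g-2+\deg F_\eta-2\chi$; for the generic semistable surface with the given invariants this degree is positive (it equals the extra ramification of the $j$-map), so the section \emph{must} vanish at closed points $b$ lying under smooth fibres. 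At such $b$ the restriction $\EE|_{X_b}$ splits as $\OO_{X_b}\oplus\OO((O)_b)$, the fibre $S_b$ carries the section $C_0$ with $\OO_S(1)\cdot C_0=0$, and --- exactly as you yourself compute --- $\LL_\eta\cdot C_0=-1$ for every choice of $N$ and $\VV_\eta$. So the fibrewise ampleness you need is false in general, step (A) cannot be established, and the route through relative global generation and cohomology-and-base-change collapses. (Your negative computation is correct and shows that no argument can aim at positivity on vertical curves of this kind; any workable argument must settle for controlling the base locus away from such vertical curves, which is all that the later application to the lifted sections $\sigma_P'$ actually requires, since those are horizontal over $\CC_z$.)

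By contrast, the paper never uses ampleness beyond the generic fibre $\Proj(\EE_\xi)$ (where Lemma \ref{l:ample-isotrivialitity-elliptic} applies). It first extends a basis of $|\OO_\xi(N_1)|$ to sections of $\OO_{\Proj(\EE)}(N_1)\otimes\pi_\eta^*\OO(f_\eta^*\VV_1)$ after a vertical twist, so this bundle is big; it then shows $\EE=\Omega_{\XX_\eta/\kappa(\eta)}(\log D_\eta)$ is \emph{nef}, being an extension of the nef line bundles $f_\eta^*\Omega_{\CC_\eta/\kappa(\eta)}(F_\eta)$ and $\OO((O)_\eta)\otimes f_\eta^*\mathbb{L}$ (using $\deg\mathbb{L}=\chi>0$ and $\deg\Omega_{\CC_\eta/\kappa(\eta)}(F_\eta)\geq 0$); finally it invokes Nakamaye's theorem to conclude that the augmented base locus of the nef and big bundle $\OO(N_1)\otimes\pi_\eta^*\OO(\VV_1)$ is vertical over $\CC_\eta$, and extracts $N$ and the vertical divisor $\VV_\eta$ from that. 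If you want to salvage your approach, you should replace fibrewise ampleness by this kind of global nef-and-big plus augmented-base-locus argument (or otherwise prove directly that the base locus of $\LL_\eta$, or of some twist of a power of it, does not dominate $\CC_\eta$), rather than attempting a local analysis at the exceptional fibres, which cannot succeed.
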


\begin{proof}
As $\OO_\xi(1)$ is ample, the line bundle $\OO_\xi(N_1) $ 
is very ample on 
$ 
\Proj (\mathcal{E}_\xi)$ for some integer $N_1 \geq 1$. 
Take any {basis} $s_1, \dots, s_k$ of the linear system $|\OO_\xi(N_1)|$ on 
$\Proj(\mathcal{E}_\xi)$. 
As $\CC_\eta/\kappa(\eta)$ is a curve, 
there exists an effective divisor 
$\VV_1$ on $\CC_\eta$  
such that $s_1, \dots, s_k$ extend to global sections of 
$H^0(\Proj (\mathcal{E}), \OO_{\Proj (\mathcal{E})}(N_1)\otimes \pi_\eta^*\OO(f_\eta^*\VV_1))$. 
\par
 Since $\OO_\xi(N_1)$ is very ample on the generic fibre $\Proj (\mathcal{E}_\xi)$ 
of $\Proj(\mathcal{E})$, the line bundle 
 $ \OO_{\Proj (\mathcal{E})}(N_1) \otimes \pi_\eta^*\OO(\VV_1)$ is a big line bundle on 
 $\Proj (\mathcal{E})$  by the very definition 
 of bigness (its the augmented base locus cannot not be  
 all of $\XX_\eta(D_\eta)$, cf. \cite{birkar}).  
 \par 
 Since 
 $\deg( \mathbb{L}) = \chi(\OO_{\XX_\eta}) = - (O)_\eta^2 > 0$, 
 the line bundle $\OO_{\XX_\eta}((O)_\eta) \otimes f_\eta^*(\mathbb{L})$ 
 is nef on $\XX_\eta$. 
 On the other hand, 
$\deg( \Omega_{\CC_\eta / \kappa(\eta)}(F_\eta) )\geq 0$.  
Indeed, it is enough to consider the case $g=0$. But in 
this case, the Shioda-Tate formula \cite{shioda-schutt-lecture} 
implies that 
$2a+m \geq 2\chi(\XX_\eta)+2$ where $a$, $m$ 
are respectively the number of additive and multiplicative 
singular fibres of $\XX_\eta \to \CC_\eta$. 
Since   $\XX_\eta$ is nonisotrivial,  $\chi(\OO_{\XX_\eta})>0$ and 
it follows that $\deg F = a +m \geq 2$. 
Hence $f_\eta^*\Omega_{\CC_\eta / \kappa(\eta)}(F_\eta)$ is nef 
on $\XX_\eta$. 
\par
Therefore, $\Omega_{\XX_\eta /\kappa(\eta)}(\log D_\eta)$ 
is an extension of nef line bundles (cf. \eqref{l-extension-of-nef-tautologic}) 
thus it is a nef vector bundle on $\XX_\eta$.  
Thus, the line bundle $\OO_{\Proj (\mathcal{E})}(N_1)\otimes \pi_\eta^*\OO(\VV_1)$   is actually nef and big. 
\par
Since the pullback of $\OO_{\Proj (\mathcal{E})}(N_1)\otimes \pi_\eta^*\OO(\VV_1)$ to 
the generic fibre over $\CC_\eta$ is $\OO_\xi(N_1)$ which is  
very ample, Nakamaye's theorem \cite{nakamaye} (which is valid over any field, cf.  \cite{birkar}) 
implies immediately that  
the augmented base locus 
$B_+(\OO_{\XX_\eta(D_\eta)}(N_1)\otimes \pi_\eta^*\OO(\VV_1)) \subset \Proj (\mathcal{E})$ 
is vertical over $\CC_\eta$, i.e., it does not dominate $\CC_\eta$. 
By definition of the augmented base locus (cf. \cite{birkar}), 
there exists $N \geq 1$ and an effective vertical divisor $\VV_\eta$ such that 
$\mathcal{L}_\eta$ is globally generated as desired. 
\end{proof}

Let $\VV$ be any effective divisor on $\XX$ extending $\VV_\eta$. 
Since $\VV_\eta + T_\eta$ is vertical with respect to the projection $f_\eta \colon \XX_\eta \to \CC_\eta$, 
The pushforward $(f_\eta)_*(\VV_\eta + T_\eta)$ is an effective divisor on $\CC_\eta$ and we can 
set  
$M = \deg (f_\eta)_*(\VV_\eta +T_\eta) \in \N$.    
It follows that for every $z$ in some Zariski dense open subset $U_0 \subset Z$, 
we have 
\begin{equation}
\label{e:definition-of-M}
\deg (f_z)_*(\VV_z + T_z) = M.
\end{equation}
\par
Since $D \subset \XX$ and $D_z \subset \XX_z$ are  simple normal crossing, 
the formation of the relative logarithmic differential sheaves $\Omega_{\XX/ Z} (\log D)$ 
commutes with the localizations to fibres above points $z \in Z$. 
This means that for every point $z \in Z$,  
$(\Omega_{\XX/Z} (\log D))_z = \Omega_{\XX_z} (\log D_z)$. 
Moreover, 
we have a commutative cartesian diagram 
 \[
\label{d-abelian-diagram-etale}
\begin{tikzcd}
\XX_z(D_z) \arrow[d, hook]   \arrow[r, "\pi_z"] & \XX_z \arrow[r, "f_z"]  \arrow[d,hook]  & \CC_z \arrow[r, "h_z"]  \arrow[d,hook] &  z  \arrow[d,hook] \\  
\XX(D)  \arrow[r, "\pi"] & \XX \arrow[r, "f"]  & \CC \arrow[r, "h"] &  Z. 
 \end{tikzcd}
\]
\par
Let $\OO(1)$ be the tautological line bundle on $\XX(D)$ and consider 
the following line bundle:  
$$
\LL \coloneqq  \OO (N) \otimes \pi^*\OO(-D + \VV).
$$
By Lemma \ref{l:globally-generated-lemma-semistable}, 
$\LL_\eta$ is globally generated where $\eta$ is the generic point of $Z$. 
Denote $\varphi \coloneqq h \circ f \circ \pi \colon \XX(D) \to Z$ 
and $\varphi_z = h_z \circ f_z \circ \pi_z \colon \XX_z(D_z) \to z$ the induced structure morphism of 
$\XX_z(D_z)$ for every $z \in Z$. 
By the local constructibility of the set of $z\in Z$ satisfying the surjectivity of the  map 
\begin{equation}
\label{e:globally-generated-fiber-wise} 
 (\varphi^*\varphi_*\LL)_z = (\varphi_z)^* (\varphi_z)_* \LL_z \to \LL_z, 
 \end{equation}
 there exists a nonempty Zariski open subset   
$U \subset U_0 \subset Z$ such that 
the map \eqref{e:globally-generated-fiber-wise} is surjective, 
i.e., $\LL_z$ is globally generated, for every $z\in U$.  
\par
Therefore,  for every section $\sigma_P \in  \XX_z(\CC_z)$  
corresponding to a rational point $P \in \XX_z(k(\CC_z))$ for some $z\in U$,  
we have $\deg_B (\sigma_P')^*\LL_z \geq 0$,  
where $\sigma_P' \colon B \to \XX_z(D_z)$ is the derivative map which lifts $\sigma_P$ 
(cf. Defintion \ref{d:tautological-inequality}). 
\par
Since $\sigma_P = \pi_z \circ \sigma'_P$ and 
$\LL_z = \OO_{\XX_z(D_z)}(N) \otimes  \pi_z^*\OO(-D_z + \VV_z)$,  
 it follows immediately for every $z \in U$ that: 
 $$
D_z \cdot \sigma_P(B) \leq \deg (\sigma'_P)^* \OO_{\XX_z(D_z)}(N)  + \VV_z \cdot \sigma_P(B) . 
$$ 
Remark that $\sigma_P(B) \cdot F \leq 1$ for every integral fibre $F$ of 
$\XX_z \to \CC_z$. 
Combining with the tautological inequality applied to $P$ 
 (cf. Definition \ref{d:tautological-inequality}), 
we deduce that 
\begin{align}
\label{e:bounded-uniform-elliptic-height-1}
D_z \cdot \sigma_P(B) 
\leq N(2g - 2+ \# D_z \cap \sigma_P(B) ) + \# \VV_z \cap \sigma_P(B) .
\end{align}
Hence, by the definition of $M$ (cf. \eqref{e:definition-of-M}) 
and of $U$, we find for every $z \in U$ that: 
\begin{align*}
(O)_z \cdot \sigma_P(B) 
& \leq N(2g - 2 + \# (O)_z \cap \sigma_P(B) ) + \# \VV_z \cap \sigma_P(B)  +(N-1) \# T_z \cap \sigma_P(B) . 
\\
& \leq N(2g - 2 + \# (O)_z \cap \sigma_P(B) ) + NM. 
\end{align*}
On the other hand, we have 
 $|(O)_z\cdot \sigma_P(B)  - \widehat{h}_{O_z}| \leq - (O)_z^2 = \chi$ 
by Lemma \ref{l:bound-elliptic-canonical-height}. 
 Thus, we have proven the following main induction step:   
\begin{lemma}
\label{l:tautological-main-induction}
There exists a Zariski dense open subset 
$U \subset Z$ and   $M, N \in \N$ such that 
for every rational point $P \in \XX_z(k(\CC_z))$ with $z \in U$, we have: 
\begin{align}
\label{e:final-step-tautological}
\widehat{h}_{O_z}(P)  \leq N(2g - 2 + \# (O)_z \cap \sigma_P(B) ) + NM + \chi. 
\end{align}
\end{lemma}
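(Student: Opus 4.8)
The plan is to splice together, into one chain of inequalities, the ingredients already developed above; the proof is in essence the discussion preceding the statement, and I record its structure. First I would run the preliminary reductions: by resolution of singularities I may assume $Z$, and hence $\CC$ and $\XX$, smooth and integral, and after an embedded resolution of $(O)+T$ that $D=(O)+T$ is simple normal crossing, with all fibres $D_z$ simple normal crossing as well because the family is semistable and admits a section. None of this changes $\widehat{h}_{O_z}$ or the numbers $(O)_z\cdot\sigma_P(B)$, and it ensures that the formation of $\Omega_{\XX/Z}(\log D)$ commutes with restriction to the fibres $\XX_z$, so that $(\Omega_{\XX/Z}(\log D))_z=\Omega_{\XX_z}(\log D_z)$.

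Next I would work over the generic point $\eta\in Z$. The exact sequence~\eqref{l-extension-of-nef-tautologic} presents $\EE=\Omega_{\XX_\eta/\kappa(\eta)}(\log D_\eta)$ as an extension of $\OO_{\XX_\eta}((O)_\eta)\otimes f_\eta^*(\mathbb L)$ by $f_\eta^*\Omega_{\CC_\eta/\kappa(\eta)}(F_\eta)$; since the family has relative maximal variation, the Kodaira--Spencer sequence of the generic fibre $\XX_\xi$ of $\XX_\eta\to\CC_\eta$ is non-split, so $\EE_\xi$ is a non-split extension of $\OO_{\XX_\xi}(D_\xi)$ by $\OO_{\XX_\xi}$ and Lemma~\ref{l:ample-isotrivialitity-elliptic} gives ampleness of the tautological bundle $\OO_\xi(1)$. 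Lemma~\ref{l:globally-generated-lemma-semistable} then yields $N\geq 1$ and an effective vertical divisor $\VV_\eta$ on $\CC_\eta$ for which $\LL_\eta=\OO_{\Proj(\EE)}(N)\otimes\pi_\eta^*\OO(-D_\eta+f_\eta^*\VV_\eta)$ is globally generated. I would then spread out: fix $\VV$ on $\XX$ extending $\VV_\eta$, set $M=\deg(f_\eta)_*(\VV_\eta+T_\eta)$, and combine the constancy~\eqref{e:definition-of-M} with local constructibility of the locus where~\eqref{e:globally-generated-fiber-wise} is surjective to obtain a Zariski dense open $U\subset Z$ on which $\LL_z$ is globally generated and $\deg(f_z)_*(\VV_z+T_z)=M$.

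The core step is the tautological inequality. For $z\in U$ and $P\in\XX_z(k(\CC_z))$, I would lift $\sigma_P$ to its derivative $\sigma'_P\colon B\to\XX_z(D_z)$; global generation of $\LL_z$ gives $\deg_B(\sigma'_P)^*\LL_z\geq 0$, hence $D_z\cdot\sigma_P(B)\leq\deg(\sigma'_P)^*\OO_{\XX_z(D_z)}(N)+\VV_z\cdot\sigma_P(B)$. Inserting the tautological inequality~\eqref{e:tautological-inequality} for $\sigma_P$, together with $\deg(\sigma_P^*D_z)_{red}=\#(D_z\cap\sigma_P(B))$, gives~\eqref{e:bounded-uniform-elliptic-height-1}. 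Splitting $D_z=(O)_z+T_z$, using $\sigma_P(B)\cdot F\leq 1$ for each integral fibre $F$ and the bound $\#(\VV_z\cap\sigma_P(B))+\#(T_z\cap\sigma_P(B))\leq\deg(f_z)_*(\VV_z+T_z)=M$, I would reach $(O)_z\cdot\sigma_P(B)\leq N(2g-2+\#((O)_z\cap\sigma_P(B)))+NM$; finally Lemma~\ref{l:bound-elliptic-canonical-height} trades $(O)_z\cdot\sigma_P(B)$ for $\widehat{h}_{O_z}(P)$ at the cost of $-(O)_z^2=\chi$, yielding~\eqref{e:final-step-tautological}.

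Within this lemma I do not expect a genuine obstacle, because the substantive input — ampleness coming from the nonvanishing Kodaira--Spencer class, and the Nakamaye-type global generation — is isolated in the auxiliary lemmas. The one point genuinely requiring care is uniformity: that a single $N$, a single $M$, and a single Zariski dense open $U$ can be chosen to work simultaneously for all $z$, which is exactly what the local constructibility of the surjectivity of~\eqref{e:globally-generated-fiber-wise} and the constancy~\eqref{e:definition-of-M} of $\deg(f_z)_*(\VV_z+T_z)$ provide.
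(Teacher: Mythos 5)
Your proposal is correct and follows essentially the same route as the paper: the same preliminary reductions, the ampleness from the non-split Kodaira--Spencer extension via Lemma \ref{l:ample-isotrivialitity-elliptic}, the global generation of $\LL_\eta$ from Lemma \ref{l:globally-generated-lemma-semistable} spread out over a dense open $U$ by constructibility, the tautological inequality applied to $\sigma'_P$, the splitting $D_z=(O)_z+T_z$ with the bound by $M$, and finally Lemma \ref{l:bound-elliptic-canonical-height} to pass to $\widehat{h}_{O_z}$. No gaps to report.
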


\subsection{Proof of Theorem \ref{t:c-12-independent}}
Apply the above procedure \eqref{e:final-step-tautological} 
for each integral component of  $Z \setminus U$ and so on. 
Since $\dim Z \setminus U <\dim Z$, the   process has only finitely steps. 
It is then clear that there exists $c_1, c_2 >0$ such that for every $z \in Z$
and every section $\sigma_P \in \XX_z(\CC_z)$ associated to a rational 
point $P \in \XX_z(k(\CC_z))$,  
we have: 
 \begin{align}
\label{e:tautological-uniform-bound-height-main}
\widehat{h}_{O_z} (P) \leq c_1s + c_2, \quad \text{ where } s= \# \sigma_P(B) \cap (O)_z.  
\end{align}

By  (\cite[Theorem 7]{seiler-1}), 
the family $\XX \to \CC \to Z$ 
can be taken as a parameter 
space of all semistable elliptic surfaces with section of parameters 
$\chi, g$ (cf. Remark \ref{r:parameter-tautological}). 
The constants $c_1, c_2$ thus depends only on $\chi, g$ 
and the proof of Theorem \ref{t:general-theorem-shorst-list-1} is completed.

\subsection{Proof of Theorem \ref{t:general-theorem-shorst-list-1}}

By hypothesis, $\DD \subset \XX$ is an adaptive family of ample effective divisor. 
Thus, we can apply, \emph{mutatis mutandis}, 
the above \emph{Main induction step} \ref{tautological-main-induction}, by replacing $D$ by $\DD+T$, 
to obtain 
a Zariski dense open subset $U \subset Z$  
and   $M, N \in \N$ such that 
for every rational point $P \in \XX_z(k(\CC_z))$ with $z \in U$, we have 
(cf. \eqref{e:bounded-uniform-elliptic-height-1}): 
\begin{equation} 
\label{final-step-tautological-inequality}
\DD_z \cdot \sigma_P(B) 
\leq N(2g - 2+ \# \DD_z \cap \sigma_P(B) ) + M .
\end{equation}

To estimate the canonical heights, we need the following auxiliary result.  
 
\begin{lemma}
\label{l:uniform-height}  
There exists $A \in \N$ and a 
Zariski dense  
open subset $W \subset Z$ 
such that for every $z\in W$, the linear system $|A \DD_z - (O)_z|$ is base-point-free where $(O)_z$ is the zero section of
the elliptic surface $f_z \colon \XX_z \to \CC_z$. 
\end{lemma}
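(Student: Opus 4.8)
The plan is to prove the statement first on the generic fibre and then to spread it out over a Zariski dense open subset of $Z$, using relative ampleness and cohomology and base change. Let $\eta \in Z$ be the generic point and consider the elliptic surface $\XX_\eta \to \CC_\eta$. By hypothesis $\DD_\eta$ is an ample effective divisor on $\XX_\eta$. By Serre's theorem, tensoring the line bundle $\OO_{\XX_\eta}(-(O)_\eta)$ with a sufficiently high power of the ample line bundle $\OO_{\XX_\eta}(\DD_\eta)$ produces a globally generated line bundle; hence there exists $A \in \N$ such that $\OO_{\XX_\eta}(A\DD_\eta - (O)_\eta)$ is globally generated, i.e.\ the linear system $|A\DD_\eta - (O)_\eta|$ is base-point-free on $\XX_\eta$.

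\textbf{Spreading out.} Next I would spread this out over $Z$. Put $\psi \coloneqq h \circ f \colon \XX \to Z$, which is proper because both $\XX \to \CC$ and $\CC \to Z$ are proper, and set $\LL \coloneqq \OO_\XX(A\DD - (O))$. Since $\DD_z$ is ample on $\XX_z$ for every $z \in Z$ and $\psi$ is proper with Noetherian target, $\DD$ is $\psi$-ample over $Z$ (cf.\ \cite[III.4.7.1]{ega-4-3}); enlarging $A$ if necessary, Serre vanishing for $\psi$-ample divisors gives $R^i\psi_*\LL = 0$ for all $i > 0$, so by the cohomology and base change theorem $\psi_*\LL$ is locally free and its formation commutes with every base change, in particular $(\psi_*\LL)_z = H^0(\XX_z,\LL_z)$ for all $z \in Z$. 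Therefore the surjectivity of the evaluation map
\[
(\psi^*\psi_*\LL)_z = H^0(\XX_z,\LL_z) \otimes \OO_{\XX_z} \longrightarrow \LL_z
\]
at every point of $\XX_z$ is exactly the condition that $\LL_z = \OO_{\XX_z}(A\DD_z - (O)_z)$ be globally generated. The support of the cokernel of $\psi^*\psi_*\LL \to \LL$ is a closed subset of $\XX$, and as $\psi$ is proper its image $Y \subset Z$ is closed. By the first paragraph $\eta \notin Y$, so $W \coloneqq Z \setminus Y$ is a nonempty — hence, since $Z$ is integral, Zariski dense — open subset of $Z$, and for every $z \in W$ the linear system $|A\DD_z - (O)_z|$ is base-point-free, as asserted.

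\textbf{Main obstacle.} The only delicate point is the base-change bookkeeping: one must know that $\psi_*\LL$ genuinely computes $H^0$ fibrewise, so that fibrewise surjectivity of the evaluation map really detects global generation of $\LL_z$. This is what forces the passage through relative ampleness of $\DD$ and Serre vanishing above; it is the same mechanism already used in the local constructibility argument following Lemma~\ref{l:globally-generated-lemma-semistable}, so in the write-up I would simply invoke that argument rather than repeat it. Everything else is formal.
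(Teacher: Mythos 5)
Your proposal is correct and follows essentially the same route as the paper: establish global generation of $\OO(A\DD_\eta-(O)_\eta)$ on the generic fibre using ampleness of $\DD_\eta$, then spread out to a Zariski dense open $W\subset Z$ by detecting base-point-freeness through surjectivity of the fibrewise evaluation map $(\phi^*\phi_*\LL)_z\to\LL_z$ (the paper invokes local constructibility of this surjectivity, you invoke Serre vanishing plus cohomology and base change and closedness of the image of the cokernel's support). The only bookkeeping caveat in your version is that cohomology and base change requires flatness of $\LL$ over $Z$, which is not stated globally but holds over a dense open by generic flatness — harmless here, since the lemma only asks for a dense open $W$.
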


\begin{proof}
Consider   the divisors $\DD_\eta$ and $(O)_\eta$ on $\XX_\eta$ 
where $\eta$ is the generic point 
of $Z$.  
Since $\DD$ is a family of ample divisors by hypothesis, 
$\DD_\eta$ is   ample on $\XX_\eta$. 
Define $\LL(A) \coloneqq \OO(A \DD - (O))$ then 
there exists $A \in \N$ such that the line bundle 
$\LL(A)_\eta = \OO(A \DD_\eta - (O)_\eta)$ 
is globally generated on $\XX_\eta$. 
In other words, the canonical morphism
$ (f^*f_*\LL)_\eta = (f_\eta)^* (f_\eta)_*\LL(A)_\eta \to \LL(A)_\eta$ 
is surjective. 
Let $\phi \coloneqq h \circ f \colon \XX \to Z$ (cf. \eqref{d-abelian-diagram-etale}). 
It follows that the canonical morphism
$$ (\phi^*\phi_*\LL)_\eta = (\phi_\eta)^* (\phi_\eta)_*\LL(A)_\eta \to \LL(A)_\eta$$
is surjective. 
By the local constructibility property over 
the base of the surjectivity of morphism of coherent sheaves,  
there exists a Zariski dense open subset $W \subset Z$ such that 
for every $z\in W$, the following natural map is surjective: 
$$ (\phi_z)^* (\phi_z)_*\LL(A)_z \to \LL(A)_z.$$
It follows that the linear system $|A \DD_z - (O)_z|$ is  base-point-free for every $z\in W$.   
\end{proof} 
Denote $U'= W \cap U$. Then $U'$ is a Zariski dense open subset of $Z$. 
For every $z\in U'$,  
\begin{align*}
\label{e:final-step-tautological-last}
\widehat{h}_{O_z}(P) 
& \leq  (O)_z \cdot \sigma_P(B) + \chi &  (\text{by Lemma } \ref{l:bound-elliptic-canonical-height})
\\
 & \leq A  \DD_z \cdot \sigma_P(B) + \chi  &  (\text{by Lemma }  \ref{l:uniform-height})
\\
&  \leq AN(2g - 2 + \# \DD_z \cap \sigma_P(B)) + AM+ \chi. & (\text{by }   \eqref{final-step-tautological-inequality}) 
\end{align*}

We continue the above procedure 
for each integral component of $Z \setminus U'$ and so on. 
As $\dim Z \setminus U' <\dim Z$, the process terminates after a finite number of steps. 
Hence, there exists $c_1, c_2 >0$ such that for every $z \in Z$ and every  $P \in \XX_z(k(\CC_z))$, 
\begin{align*}
\widehat{h}_{O_z} (P) \leq c_1s + c_2, \quad \text{ where } s= \# \sigma_P(B) \cap \DD_z.  
\end{align*}
The proof of Theorem \ref{t:general-theorem-shorst-list-1} is thus completed.

\section{Appendix}

For ease of reading, we  reformulate several known results 
 necessary for our article.  
\subsection{de Franchis theorems}

We collect in this section several finiteness results concerning 
ramified coverings of curves. 
Let $k$ be a field 
and let $C/k$ be a smooth, geometrically connected, projective curve of genus $q$. 
Let $F=k(C)$ and   $m\in \N$. 
 Each subfield $k\subsetneq F'\subset F$ corresponds to an $k$-isomorphism class of a smooth, 
 geometrically connected, projective curve $C'$ such that the corresponding non constant map $f \colon C\to C'$ verifies $f^*(k(C'))=F'$.
\par
An immediate consequence of a result of Tamme-Kani  
is the following effective version of the de Franchis Theorem: 

\begin{theorem}
[Tamme-Kani] 
\label{c:kani}
Let $k$ be a field and let $m\in \N$.   
Let $E/k$ be an elliptic curve and let $C/k$ be a smooth  projective geometrically connected curve of genus $q$. 
Then, up to composition with an element of $\Aut_k(E)$, 
the number of degree-$m$ covers $h \colon C\to E$ 
is uniformly bounded by an effective function $M(q,m)$ depending only on $q$ and $m$. 
\end{theorem}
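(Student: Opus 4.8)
The plan is to derive Theorem \ref{c:kani} from the full de Franchis--Severi theorem together with an independent parametrization of the possible maps by their branch data. First I would fix the elliptic curve $E/k$ and the curve $C/k$ of genus $q$, and recall that a degree-$m$ cover $h\colon C\to E$ corresponds, on the level of function fields, to a subfield $k\subsetneq F'=h^*k(E)\subset F=k(C)$ with $[F:F']=m$ together with the structure of elliptic curve on the model of $F'$. The de Franchis theorem (in the form cited, \cite[Theorem XXI.8.27]{Arbarello-II}) already gives that, up to isomorphism of the target, there are only finitely many curves $E'$ admitting a nonconstant map $C\to E'$; but here we need to (a) pin the target isomorphism type to $E$ itself, and (b) bound the number of maps, not just of targets, and moreover do this by a function of $q$ and $m$ \emph{only}, independent of $E$ and $C$.

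The key step is to bound, for a \emph{fixed} target $E$ and fixed degree $m$, the number of degree-$m$ covers $C\to E$ modulo $\Aut_k(E)$. Here I would invoke the result of Tamme and Kani on elliptic subfields: Kani's work shows that the number of subfields $F'\subset F$ with $F/F'$ of degree $m$ and genus-one model isomorphic to a given $E$ is finite and effectively bounded; composing a cover with an automorphism of $E$ does not change the subfield $F'$, so covers modulo $\Aut_k(E)$ inject into the set of such subfields. The crucial point making the bound depend only on $(q,m)$ is the Riemann--Hurwitz constraint: for $h\colon C\to E$ of degree $m$ one has $2q-2=m\cdot 0 + \deg R_h = \deg R_h$, so the ramification divisor has degree exactly $2q-2$, hence the branch locus $B_h\subset E$ has at most $2q-2$ points. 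Thus each cover is determined by: a choice of at most $2q-2$ branch points on $E$, a choice of monodromy representation $\pi_1(E\setminus B_h)\to S_m$ with image a transitive subgroup, and the local monodromy data at the branch points; the number of such combinatorial data is bounded purely in terms of $q$ and $m$ (the monodromy group is a subgroup of $S_m$, there are at most $2q-2$ branch points, and the $m$-sheeted cover is determined by the monodromy up to the finitely many automorphisms). To remove the dependence on the position of the branch points on $E$, one translates so that one branch point is the origin and uses that $\Aut_k(E)\supset E[1]=\{O\}$ acts — more precisely one uses Kani's reduction that two covers with the same monodromy type and branch-point configuration up to $\Aut(E)$ agree up to $\Aut_k(E)$, which collapses the continuous moduli.

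Concretely the steps are: (1) reduce to $\bar k$ by flat base change (the number of covers can only grow, and $\Aut$ behaves well), since ``effective function of $q,m$'' is a purely numerical statement; (2) apply Riemann--Hurwitz to bound the number of branch points by $2q-2$ and the degree of ramification; (3) encode each cover by its monodromy datum, a conjugacy class of transitive homomorphisms from the fundamental group of a $(2q-2)$-punctured genus-one surface into $S_m$ with prescribed cycle types summing (via Riemann--Hurwitz on the punctured surface) consistently — the number of such data is at most a function $M_0(q,m)$; (4) use the theorem of Tamme--Kani (\cite{kani-hurwitz}-type results on elliptic subfields, cf. the references the author has in mind) to see that covers with the same monodromy datum and isomorphic source and target agree up to $\Aut_k(E)$, so the total count is $\le M_0(q,m)$, giving the desired $M(q,m)$. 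The main obstacle I expect is step (4): making precise that fixing the \emph{combinatorial} branch/monodromy type really does cut down to finitely many covers up to $\Aut_k(E)$ without any residual dependence on where the branch points sit in $E(k)$ — this is exactly where one needs the genuine Tamme--Kani input rather than a soft de Franchis argument, because na\"ively the branch configuration varies in a positive-dimensional family and one must use the group structure of $E$ (translations, and the hyperelliptic-type involution) to rigidify it. Everything else is routine once that rigidification is in hand.
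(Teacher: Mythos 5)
Your second paragraph already contains the entire proof, and it is the paper's proof: a degree-$m$ cover $h \colon C \to E$ determines the subfield $h^*k(E) \subset k(C)$, two covers inducing the same subfield differ by an automorphism of $E$ as a curve, so covers modulo $\Aut_k(E)$ inject into the set of genus-one subfields of index $m$ of $k(C)$; and Kani's theorem (\cite[Theorem 4]{kani86} and its corollary, building on Tamme) bounds the number of such subfields by an \emph{explicit} function of $q$ and $m$ alone. The uniformity in $(q,m)$, independent of $E$ and $C$, is built into that statement; it is not something you need to extract afterwards. The paper does exactly this and simply records the resulting explicit value of $M(q,m)$.

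Where the proposal goes astray is in believing that a further branch-data/monodromy argument is required to obtain the $(q,m)$-only dependence, and that argument has a genuine gap. A monodromy datum (a transitive homomorphism $\pi_1(E \setminus B) \to S_m$ with prescribed local data) determines the cover only once the finite branch set $B \subset E$ is fixed; as $B$ varies in the positive-dimensional family $E^{n}$ with $n \le 2q-2$, the covers of fixed combinatorial type form a Hurwitz family of positive dimension, and translations together with $\pm\Id$ remove only one dimension, so they cannot rigidify the configuration once $n \ge 2$. Your step (4) --- that two covers with the same monodromy datum and isomorphic source and target agree up to $\Aut_k(E)$ --- is exactly the unproved (and, as literally stated, incorrect) claim: requiring the source in the Hurwitz family to be isomorphic to the fixed $C$ does cut the family down to finitely many members, but bounding that number uniformly in $(q,m)$ is precisely the content of the Tamme--Kani theorem, which you then invoke again to close the gap, making the detour circular as well as unnecessary. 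Dropping steps (2)--(4) and keeping the subfield injection plus the quantitative Kani bound gives a complete argument, identical to the one in the paper.
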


\begin{proof}
It suffices to apply  \cite[Theorem 4]{kani86} (and the corollary that follows) and define 
$M(q,m)= 2^{6q^2-1}m^{4q^2-2}(\zeta(2)/2)m^2+(m/2)(\log(m)+1).$   
\end{proof}

Given two covers $f_1 \colon X\to Y_1$ and $f_2 \colon X\to Y_2$ of compact Riemann surfaces, we say that $f_1$ and $f_2$ are \emph{equivalent} if there is a biholomorphic map $p\colon Y_1 \to Y_2$ such that $f_2=p\circ f_1$. 

\begin{theorem} [Bujalance-Gromadzki] 
\label{t:buj-gro} 
Given a compact Riemann surface $X$ of genus $q\geq 2$,  
the set of nonequivalent   double covers $X\to Y$ is bounded by a function $D(q)$. 
\end{theorem}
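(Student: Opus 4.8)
The plan is to translate the counting of double covers into the counting of order-two automorphisms of $X$, and then to apply Hurwitz's bound on $\Aut(X)$.

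First I would invoke the standard dictionary between degree-two morphisms and involutions. A degree-two morphism $f\colon X\to Y$ of compact Riemann surfaces is automatically Galois: the extension of function fields $\C(X)/\C(Y)$ has degree two, hence (in characteristic zero) is normal with Galois group $\Z/2$, generated by a nontrivial involution $\tau_f\in\Aut(X)$; moreover $Y$ is canonically identified with the quotient $X/\langle\tau_f\rangle$ and $f$ with the quotient map, the ramification points of $f$ being exactly the fixed points of $\tau_f$. Conversely, every $\tau\in\Aut(X)$ with $\tau\neq\Id$ and $\tau^2=\Id$ yields a double cover $X\to X/\langle\tau\rangle$.

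Next I would check that this correspondence respects the equivalence relation on covers defined just above the statement. If $f_1\colon X\to Y_1$ and $f_2\colon X\to Y_2$ are equivalent, say $f_2=p\circ f_1$ for a biholomorphism $p\colon Y_1\to Y_2$, then $p$ being bijective forces the partitions of $X$ into the fibres of $f_1$ and of $f_2$ to coincide, whence $\tau_{f_1}=\tau_{f_2}$; conversely, equal involutions give the same cover. Hence the set of equivalence classes of double covers of $X$ is in bijection with the set of involutions of $X$, which is a subset of $\Aut(X)$.

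Finally, since $q\geq 2$, the Hurwitz automorphism theorem gives $\#\Aut(X)\leq 84(q-1)$, so the number of nonequivalent double covers is at most $D(q)\coloneqq 84(q-1)$. (For orientation, Riemann--Hurwitz also forces $g(Y)\leq (q+1)/2$ with the number of branch points determined by $g(Y)$, so only finitely many topological types occur, but this is not needed for the qualitative bound.) The only non-formal ingredient is the finiteness and explicit bound for $\Aut(X)$; everything else is the elementary equivalence between degree-two covers and involutions. Recovering the sharper bound of Bujalance--Gromadzki would instead require analysing which finite groups acting on a genus-$q$ surface can contain many involutions, but that refinement is unnecessary here.
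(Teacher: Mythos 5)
Your argument is correct, and it proves the statement by a different route than the paper, which offers no internal proof at all but simply cites Bujalance--Gromadzki. Your reduction is the right one for the equivalence relation as defined here (post-composition with a biholomorphism of the target only): a degree-two map is automatically Galois in characteristic zero, so each cover determines a unique nontrivial deck involution $\tau_f\in\Aut(X)$; equivalent covers have the same fibre partition and hence the same involution, and conversely equal involutions give canonically isomorphic quotients $Y\cong X/\langle\tau\rangle$ compatibly with the maps. This sets up an injection from equivalence classes of double covers into the set of involutions of $X$, and Hurwitz's theorem then gives the uniform bound $D(q)=84(q-1)$, valid for ramified and unramified covers alike since $q\geq 2$. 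What the citation buys instead is a sharper, more refined count of ramified double covers obtained by analysing which groups of automorphisms of a genus-$q$ surface can contain many involutions; what your argument buys is a short, self-contained proof whose crude bound is entirely sufficient for the way the theorem is used in the paper (only the existence of some function $D(q)$ enters the proof of the unit-equation theorem). One small presentational point: you could state explicitly that the quotient $X/\langle\tau\rangle$ carries a unique structure of compact Riemann surface making the projection holomorphic, which is the fact underlying the ``conversely'' step, but this is standard and does not affect correctness.
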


\begin{proof}
See \cite{buj-gro-00}. 
\end{proof}

\subsection{Shafarevich problems} 
Let $N \geq 1$. 
The moduli space   
of elliptic curves with level $N$-structure $Y_1(N)$ is 
a Riemann surface and can be compactified into a compact Riemann surface   $X_1(N)$ 
(see \cite[Chapter 1.5]{diamond-shurman}). 
The genus of $X_1(N)$ can be given as an explicit function of $N$ (cf. 
\cite[Ex.3.1.6]{diamond-shurman}). For 
$n_0 =1728$, $X_1(n_0)$ has genus greater than $2$.  
\par
Let $B$ be a smooth projective complex curve of genus $g$ and let $S \subset B$ be a finite subset of cardinality $s \geq 1$. 
Two elliptic surfaces 
$\mathcal{E} \to B$ and $\mathcal{E}' \to B$ are said to be \emph{equivalent} if there exists a $B$-isomorphism 
$\mu \colon \mathcal{E} \to \mathcal{E}'$. We have:

\begin{theorem}
[Uniform Shafarevich's theorem for elliptic surfaces]  
\label{t:uniform-shafarevich-prob}
There exists a function $A \colon \N^2 \to \N$ such that 
the set of equivalence classes of nonisotrivial minimal elliptic surfaces 
$\mathcal{E} \to B$ with good reduction away from $S$  has no more than  
$A(g,s)$ elements. 
 \end{theorem}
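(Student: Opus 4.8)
The plan is to use the modular interpretation of elliptic surfaces, so that counting becomes counting a bounded number of morphisms into a \emph{fixed} modular curve of genus $\geq 2$. First I would treat elliptic surfaces with a section, i.e.\ elliptic curves $E/K$ over $K=\C(B)$ with good reduction outside $S$ (this is the case needed for the applications in this paper, e.g.\ $q=1$ in the proof of Theorem \ref{t:unit-general-3}); the general case then follows by additionally bounding the number of $B$-torsors under a fixed Jacobian elliptic surface with good reduction outside $S$, a finiteness of the same flavour that I would record separately. Fix once and for all the integer $n_0=1728$, so that $X_1(n_0)$ has genus $\geq 2$ as recalled above, and set $D=\#\GL_2(\Z/n_0\Z)$, an absolute constant.

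Given such $E/K$, let $K_{n_0}=K(E[n_0])$, so that $\Gal(K_{n_0}/K)\hookrightarrow\GL_2(\Z/n_0\Z)$, and let $h\colon B_{n_0}\to B$ be the corresponding finite cover of smooth projective curves; then $\deg h\leq D$. Since $E$ has good reduction outside $S$, its N\'eron model over $B\setminus S$ is an abelian scheme on which $E[n_0]$ is finite \'etale, so $h$ is ramified only over $S$ --- this is exactly the base-change trick already used in the proof of Lemma \ref{p:2-division}. The Riemann--Hurwitz formula then bounds the genus of $B_{n_0}$ by $g^*:=1+\tfrac{D}{2}(2g-2+s)$, a function of $g$ and $s$ alone. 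Over $K_{n_0}$ the curve $E$ carries a $K_{n_0}$-rational point of exact order $n_0$, which together with the good reduction produces a morphism $B_{n_0}\setminus h^{-1}(S)\to Y_1(n_0)$; it extends to $\bar\phi\colon B_{n_0}\to X_1(n_0)$ because $B_{n_0}$ is a smooth projective curve and $X_1(n_0)$ is projective, and the composite $j\circ\bar\phi$ is the pullback along $h$ of the $j$-invariant morphism $j(\mathcal{E})\colon B\to\Proj^1$, which is nonconstant since $\mathcal{E}$ is nonisotrivial; hence $\bar\phi$ is nonconstant.

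Next I would carry out the count in three stages. The number of finite covers $B'\to B$ of degree $\leq D$ ramified only over $S$ is at most $N_1(g,s):=\sum_{d\leq D}(d!)^{\,2g+s-1}$, since $\pi_1^{\mathrm{et}}(B\setminus S)$ is the profinite completion of a free group of rank $2g+s-1$ and such a cover is a permutation representation of it of degree $\leq D$. For each of the resulting finitely many curves $B_{n_0}$, all of genus $\leq g^*$, a nonconstant $\bar\phi\colon B_{n_0}\to X_1(n_0)$ has degree $\leq 2g^*-2$ by Riemann--Hurwitz, and the number of nonconstant morphisms from a curve of genus $\leq g^*$ onto the fixed curve $X_1(n_0)$ of genus $\geq 2$ is bounded by a function $N_2(g^*)$ of $g^*$ alone, by the effective de Franchis theorem (cf.\ \cite{kani86}). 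Finally, the pair $(h,\bar\phi)$ determines $j(E)\in K$ --- it is the unique descent of $j\circ\bar\phi$ along the dominant map $h$ --- and $E$ is determined by $j(E)$ up to a twist of order $n\in\{2,4,6\}$; keeping good reduction outside $S$ forces the class of the twisting parameter in $K^*/(K^*)^n$ to lie in the image of $\OO_{K,S}^*$, whose quotient by the constants $\C^*$ is free of rank $\leq s-1$, so there are at most $6^s$ admissible twists. Composing, each $E/K$ determines a triple lying in a set of cardinality $\leq N_1(g,s)\,N_2(g^*)$ with fibres of size $\leq 6^s$, so $A(g,s):=6^s\,N_1(g,s)\,N_2(g^*)$ works.

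The only genuinely nontrivial input --- and hence the main obstacle --- is the \emph{effective} de Franchis estimate $N_2(g^*)$: the number of nonconstant morphisms from a curve of bounded genus onto a fixed curve of genus $\geq 2$ must be bounded purely in terms of the source genus. Everything else is the controlled base change of Lemma \ref{p:2-division} together with the finiteness of covers with prescribed branch locus; one could alternatively add a level structure by a further base change and invoke Caporaso's uniform Parshin--Arakelov theorem, but in the genus-one case the modular argument above is more direct.
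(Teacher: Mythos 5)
Your proposal follows essentially the same route as the paper's proof: force a point of order $n_0=1728$ by a base change of degree bounded in terms of $(g,s)$ and ramified only over $S$, use that $X_1(n_0)$ has genus $\geq 2$ so the moduli map extends to a nonconstant morphism into a \emph{fixed} curve, bound the number of admissible covers via the freeness of $\pi_1(B\setminus S)$ (rank $2g+s-1$), and then invoke an effective de Franchis theorem to bound the number of such morphisms. Where you differ is in bookkeeping: you take the explicit cover $B_{n_0}$ corresponding to $K(E[n_0])$ rather than the paper's ``some bounded étale cover acquiring a level structure,'' you restrict to the Jacobian (sectioned) case and defer the torsor counting, and — usefully — you make explicit the final descent step (how many $E/K$ give the same pair $(h,\bar\phi)$), which the paper's proof leaves implicit.

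One step is incorrect as written, though harmless for the statement: the claim that good reduction outside $S$ forces the twisting class in $K^*/(K^*)^n$ to lie in the image of $\OO_{K,S}^*$, hence at most $6^s$ twists. For $g\geq 1$ there are nontrivial cyclic extensions of $K$ unramified \emph{everywhere}, coming from $n$-torsion of $\Jac(B)$, whose twisting parameters are not $S$-units times $n$-th powers; the correct statement is that twists of $E$ with good reduction outside $S$ are classified by (a subset of) $H^1\bigl(\pi_1^{et}(B\setminus S),\Aut(E)\bigr)$, and since $E$ is nonisotrivial one has $j(E)\neq 0,1728$, so $\Aut(E)=\{\pm 1\}$ and this group is $\Hom\bigl(\pi_1^{et}(B\setminus S),\Z/2\Z\bigr)$, of order $2^{2g+s-1}$. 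This only changes your constant ($2^{2g+s-1}$ in place of $6^s$), and the resulting bound still depends only on $(g,s)$, so the argument goes through after this correction.
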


 \begin{proof}
 $U\coloneqq B \backslash S$ is an open affine curve. 
We claim that for every elliptic scheme $\mathcal{E} \to U$, there is a finite \' etale cover $U' \to U$ 
of uniformly bounded degree (in terms of $g,s$) over which $\mathcal{E}$ has a section of order $n_0$. 
Indeed, since $\pi_1(U)$ has only finitely many quotients of a given size 
(which corresponds to the degree of $U' \to U$), there are only finitely many possibilities for $U'$ up to $U$-isomorphism. 
The number of such $U'$ is bounded in terms of the free group $\pi_1(U)\simeq F_{2g-1+s}$ and thus uniformly bounded in terms of $g, s$. 
Each connected component $U_i'$ of $U'$ has a moduli map to $Y_1(n_0)$ 
which is dominant if the $j$-invariant $j(E)$ is not constant. 
As $X_1(n_0)$ has genus at least 2, the de Franchis theorem 
(cf. \cite[Theorem XXI.8.27]{Arbarello-II}) tells us that there are only finitely many such moduli maps. 
Therefore, there exists a finite \' etale extension $V \to U$ 
of uniformly bounded degree over which every nonisotrivial elliptic scheme $\mathcal{E} \to U$ has a section of order $n_0$ as claimed. 
Again, an effective version of the de Franchis theorem (cf. \cite{alzati-pirola-90}) 
implies that  the number of nonconstant maps $V \to Y_1(n_0)$ is uniformly bounded. 
The proof is completed. 
 \end{proof}

\subsection{Simple cyclic covers}
Let $k$ be an algebraically closed field of characteristic $0$. 
 
\begin{proposition} 
[Simple cyclic covers]
\label{p:cyclic-covers}
Let $m\in \N$. Let $X/k$ be an irreducible projective smooth variety. 
Let $D$ be an effective divisor of $X$ such that $D \sim L^{\otimes m}$ for some line bundle $L$ of $X$. 
There exists a unique finite cyclic cover of degree $m$ of irreducible projective $k$-varieties 
$f_{D, L, m} \colon X' \to X$ 
 such that $f$ is totally ramified and ramified only above $D$ and every point of     
$f_{D,L,m}^{-1} (X \setminus D_{\text{sing}})$ is a regular point of $X'$. 
\end{proposition}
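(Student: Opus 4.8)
\emph{Plan.} The plan is to realise $f_{D,L,m}$ as the classical cyclic covering attached to the data $(D,L,m)$ and to read off every asserted property from two local models, plus a generic‑fibre computation for integrality and an eigensheaf argument for uniqueness.

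\emph{Construction and elementary properties.} First I would fix a section $s\in H^0(X,L^{\otimes m})$ with $\divisor(s)=D$ (possible since $D\sim L^{\otimes m}$, and unique up to a factor in $k^*$), and form
\[
\AAA\coloneqq\bigoplus_{i=0}^{m-1}L^{-i},
\]
made into a sheaf of graded commutative $\OO_X$-algebras by the tautological product $L^{-i}\otimes L^{-j}\to L^{-(i+j)}$ when $i+j<m$ and, when $i+j=m+k$ with $0\le k<m$, by the composite $L^{-(i+j)}\xrightarrow{\mathrm{id}\otimes s}L^{-(i+j)}\otimes L^{\otimes m}=L^{-k}$; associativity and commutativity are immediate because $s$ is a single fixed section (the number of times $s$ is used in a product depends only on the total degree). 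Then I set $X'\coloneqq\Spec_X\AAA$ with structure morphism $f=f_{D,L,m}\colon X'\to X$. Since $\AAA$ is locally free of rank $m=\rank\AAA$, $f$ is finite and flat of degree $m$, and $X'$ is projective over $k$ because $X$ is. Choosing a primitive $m$-th root of unity $\zeta\in k$, the automorphism $(a_i)_i\mapsto(\zeta^i a_i)_i$ generates a faithful $\Z/m\Z$-action on $\AAA$ over $\OO_X$, so $f$ is a cyclic cover; and if $s'=\lambda s=\mu^m s$ is another choice, $(a_i)_i\mapsto(\mu^{-i}a_i)_i$ is an $X$-isomorphism of the two algebras, so $X'$ does not depend on $s$.

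\emph{Local models, ramification, integrality.} All remaining claims are étale-local on $X$. At $x\notin D$, $s$ is a local unit, so $\AAA_x\cong\OO_{X,x}[t]/(t^m-u)$ with $u$ a unit; as $\car k=0$ this is étale over $\OO_{X,x}$, hence $f$ is unramified and $X'$ regular there. At $x\in D_{sm}$ — here I would assume, as happens in all the applications, that $D$ is reduced (more generally it suffices that no prime dividing $m$ divides every multiplicity of $D$) — I pick a local equation $t_1$ of $D$ in a regular system of parameters $t_1,\dots,t_n$ at $x$; then $s=ut_1$ with $u$ a unit and $\AAA_x\cong\OO_{X,x}[t]/(t^m-ut_1)$, and, replacing $t_1$ by the regular parameter $ut_1$, the local ring of $X'$ at the unique point over $x$ has maximal ideal $(t,t_2,\dots,t_n)$ and dimension $n$, hence is regular; its fibre over $x$ is $\Spec\kappa(x)[t]/(t^m)$, so $f$ is totally ramified with index $m$ there. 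This yields at once that $f$ is ramified only over $D$, totally ramified over $D$, and regular over $f^{-1}(X\setminus D_{sing})$ (over $D_{sing}$, e.g.\ at a crossing $t_1t_2=0$ of $D$, the model $t^m=t_1t_2$ is genuinely singular, consistent with the statement). Finally $X'$ is integral: $\AAA$ is $\OO_X$-torsion-free, and its generic fibre is $k(X)[t]/(t^m-h)$ with $h\in k(X)^*$ representing $s$; since $\ord_{D_i}(h)\equiv\mult_{D_i}(D)\pmod m$ for each component $D_i$ of $D$, reducedness of $D$ forces $h\notin k(X)^{*p}$ for every prime $p\mid m$, so by the Vahlen–Capelli criterion $t^m-h$ is irreducible over $k(X)$ and $\AAA_\eta$ is a field, whence $X'$ is irreducible (and reduced).

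\emph{Uniqueness, the delicate step.} Suppose $g\colon Y\to X$ is another degree-$m$ cyclic cover of integral projective varieties, ramified only over $D$, totally ramified there, regular over $X\setminus D_{sing}$, and attached to $L$ (i.e.\ with $\OO_X$ and $L^{-1}$ as its degree-$0$ and degree-$1$ eigensheaves for the $\mu_m$-action). Decomposing $g_*\OO_Y=\bigoplus_{i=0}^{m-1}M_i$ into eigensheaves, the two local models above describe $g$ as well and show that each $M_i$ is a line bundle and that the multiplication maps $M_1^{\otimes i}=L^{-i}\to M_i$ are isomorphisms over $X\setminus D_{sing}$; since $D_{sing}$ has codimension $\ge 2$ in the smooth variety $X$, an isomorphism of line bundles valid off a codimension-$\ge 2$ set extends, so $M_i\cong L^{-i}$ and $g_*\OO_Y\cong\AAA$ as $\OO_X$-modules. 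The algebra structure is then pinned down by the induced map $M_1^{\otimes m}=L^{-m}\to\OO_X$, i.e.\ by a section of $L^{\otimes m}$ whose zero divisor is exactly the branch locus $D$, hence by $\lambda s$ for some $\lambda\in k^*$; by the construction this reproduces $X'$. I expect this last step to be the main obstacle: one must use in an essential way the locally-free (``simple'') hypothesis, since otherwise the normalisation of $X'$ would be a second candidate with identical branch behaviour, and one must check carefully that ``regular over $X\setminus D_{sing}$ together with total ramification'' does propagate the local structure $\OO_Y\cong\OO_X[w]/(w^m-(\text{equation of }D))$ across all of $D_{sm}$.
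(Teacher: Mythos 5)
The paper itself offers no proof of this proposition beyond a citation of Pardini's structure theorem for abelian covers, and your proposal essentially reconstructs the standard argument behind that citation: the algebra $\AAA=\bigoplus_{i=0}^{m-1}L^{-i}$ with multiplication twisted by the section $s$ with $\divisor(s)=D$, the two local models $t^m=u$ and $t^m=ut_1$, a Kummer/Capelli argument on the generic fibre for irreducibility, and an eigensheaf decomposition for uniqueness; so in substance you are proving the quoted result rather than taking a different route, and your construction, ramification analysis and regularity over $X\setminus D_{\mathrm{sing}}$ are correct (over an algebraically closed field of characteristic $0$ the extra Capelli condition when $4\mid m$ is absorbed since $-4$ is a fourth power). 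Two remarks. First, your reducedness caveat is not a defect of the write-up but reflects a genuine imprecision of the statement: for an arbitrary effective $D$ the cover $\Spec_X\AAA$ need not be irreducible (take $D=mD_0$, $L=\OO(D_0)$), and the weaker condition you record — for every prime $p\mid m$ some component of $D$ has multiplicity prime to $p$ — is exactly what is needed; it does hold in every application in the paper, e.g.\ the divisor $Z=(P)+\DD+W_T+\varepsilon f^*[b_0]$ used in the proof of Theorem \ref{t:uniform-caporaso-elliptic-1} is not reduced in general but contains the section $(P)$ with multiplicity one, and likewise in Sections \ref{section:negative-parshin-arakelov-1} and \ref{s:parshin-arakelov-unit-equation}. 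Second, your reading of uniqueness (for the fixed data $(D,L,m)$ — indeed twisting $L$ by an $m$-torsion line bundle yields a different cover with the same branch divisor) is the right one, and the eigensheaf argument is the standard proof; but to propagate $M_i\simeq L^{-i}$ across the codimension-two locus $D_{\mathrm{sing}}$ you must assume the competing cover is $S_2$ (e.g.\ normal), since otherwise one can pinch $X'$ along points over $D_{\mathrm{sing}}$ and get a non-isomorphic integral cover agreeing with $X'$ over $X\setminus D_{\mathrm{sing}}$; on the other hand your worry that the normalisation of $X'$ might be a second candidate is unfounded, because $X'$ is locally a hypersurface (hence $S_2$) and regular in codimension one, so it is already normal by Serre's criterion. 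With normality added — which is how the cited reference states the result — your uniqueness sketch goes through.
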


\begin{proof}
See for example \cite{pardini-91}, in particular \cite[Proposition 3.1]{pardini-91}. 
\end{proof}

Moreover, it is clear that construction of simple cyclic covers is functorial:  

\begin{proposition}
\label{p:cyclic-covers-fonctorial}
Let $X, Y$ be smooth irreducible projective varieties. 
Let $g \colon Y \to X$ be a morphism such that 
$g^*D$ is well defined where $D$ is an effective divisor on $X$. 
Let $m \in \N$ and $L$ a line bundle such that $L^{\otimes m}\sim \OO_X(D)$. 
We have a cartesian square: 

\[
\label{d-parshin-integral-point-diagram-etale}
\begin{tikzcd}
 Y'  \arrow[r,"f_{g^*D, g^*L, m}"]  \arrow[d]  & [3em] Y \arrow[d,"g"]  \\ 
 X' \arrow[r, "f_{D,L,m}"]   & X.  
\end{tikzcd}
\]
\end{proposition}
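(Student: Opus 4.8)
The plan is to reduce the statement to the explicit relative-$\Spec$ description of the simple cyclic cover together with the fact that relative $\Spec$ commutes with arbitrary base change. Recall from \cite{pardini-91} (cf.\ Proposition \ref{p:cyclic-covers}) that $f_{D,L,m}\colon X'\to X$ is $X'=\Spec_X \AAA_{D,L,m}$, where $\AAA_{D,L,m}=\bigoplus_{i=0}^{m-1}L^{\otimes(-i)}$ is the quasi-coherent $\OO_X$-algebra whose multiplication $L^{\otimes(-i)}\otimes L^{\otimes(-j)}\to L^{\otimes(-i-j)}$ is the obvious one when $i+j<m$, and, when $i+j\geq m$, is obtained by composing with the section $s\in H^0(X,L^{\otimes m})$ satisfying $\divisor(s)=D$, viewed as a morphism $L^{\otimes(-m)}\to\OO_X$. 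The uniqueness clause of Proposition \ref{p:cyclic-covers} says that, up to $X$-isomorphism, this is the only degree-$m$ cyclic cover which is totally ramified exactly along $D$ and regular over $X\setminus D_{\mathrm{sing}}$.

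First I would verify that this algebra is compatible with pullback. Since $g^*D$ is a well-defined effective Cartier divisor, and since $g^*$ commutes with tensor products and with taking inverses of line bundles, one has $g^*\AAA_{D,L,m}\cong\bigoplus_{i=0}^{m-1}(g^*L)^{\otimes(-i)}$ as $\OO_Y$-modules; moreover $g^*$ sends the structure map $s\colon L^{\otimes(-m)}\to\OO_X$ to $g^*s\colon(g^*L)^{\otimes(-m)}\to\OO_Y$, and, $g^*D$ being well-defined, it sends the pair $(\OO_X(D),s_D)$ to $(\OO_Y(g^*D),s_{g^*D})$, so that $\divisor(g^*s)=g^*D$ and $(g^*L)^{\otimes m}\sim\OO_Y(g^*D)$. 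Hence $g^*\AAA_{D,L,m}$ is precisely the $\OO_Y$-algebra $\AAA_{g^*D,\,g^*L,\,m}$ attached to the data $g^*D\sim(g^*L)^{\otimes m}$ on $Y$ (the choice of trivialization $L^{\otimes m}\cong\OO_X(D)$ pulls back to a choice on $Y$, and different choices give isomorphic covers by the same uniqueness).

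Then I would conclude by base change: since relative $\Spec$ commutes with the base change along $g$, one gets $Y\times_X X'=Y\times_X\Spec_X\AAA_{D,L,m}=\Spec_Y\bigl(g^*\AAA_{D,L,m}\bigr)=\Spec_Y\AAA_{g^*D,g^*L,m}$, and the two projections of this fibre product are, respectively, the structure morphism $Y'\to Y$ of the latter cover and the natural lift of $g$ to $X'$. This is exactly the asserted cartesian square, with $Y'=\Spec_Y\AAA_{g^*D,g^*L,m}$.

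The step that needs a little care — and where I would invoke the \emph{uniqueness} in Proposition \ref{p:cyclic-covers} rather than reprove anything — is checking that $\Spec_Y\AAA_{g^*D,g^*L,m}$ genuinely is $f_{g^*D,g^*L,m}$ in the sense of that proposition, i.e.\ that it is irreducible, totally ramified exactly over $g^*D$, and regular over $Y\setminus(g^*D)_{\mathrm{sing}}$. The total ramification and the identification of the branch locus are immediate from base change: $X'\to X$ is \'etale of degree $m$ over $X\setminus D$, one has $g^{-1}(X\setminus D)=Y\setminus g^*D$, and the $\mu_m$-action base changes; the remaining properties — irreducibility of the cover and regularity over the smooth locus of the complement of $g^*D$ — are exactly those guaranteed by Proposition \ref{p:cyclic-covers} applied directly to the data $g^*D\sim(g^*L)^{\otimes m}$ on $Y$, whose associated algebra is the one just identified. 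Therefore the two covers coincide and the square is cartesian, as claimed.
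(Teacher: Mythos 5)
Your argument is correct, and it is essentially the justification the paper itself has in mind: the paper offers no proof beyond the remark that the construction of simple cyclic covers is "clearly" functorial, and your relative-$\Spec$ description of Pardini's construction together with the compatibility of $\Spec_X$ with base change (plus the uniqueness clause of Proposition \ref{p:cyclic-covers} to identify the fibre product with $f_{g^*D,g^*L,m}$) is precisely the standard way to make that remark precise. No gap relative to the paper's own treatment.
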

 
\subsection{Geometry of elliptic surfaces} 

\begin{theorem} 
\label{t:translation}
Let $f\colon X\to B$ be a minimal elliptic surface with a section $(O)$ and let $E/K$ be the associated elliptic curve. Then we have the followings:
\begin{enumerate} [\rm (1)]
\item
For each $P \in E(K)=X(B)$, 
the rational  maps $\tau_P\colon X \to X$ induced by the translation by $P$ on each regular fiber extends, by minimality of $X$, to an $B$-automorphism.
\item
 We have a homomorphism of groups 
 $E(K)  \to \text{Aut}_B(X)$, $P  \mapsto \tau_P$. 
In particular, $\tau_{-P}$ is the inverse of $\tau_P$ for any $P\in E(K)$.
\item
We have a canonical isomorphism of groups $\text{Isom}_K(E)\simeq \Aut_B(X)$.  
\item
The isomorphism group of $E$ over $K$ is given by 
$\text{Isom}_K(E)\simeq E(K) \rtimes \Aut_K(E)$  
where $\#\Aut_{\overline{K}}(E)=2,4,6$ according to $j_E\neq 0,1728, j_E=1728$ or $j_E=0$. 
\end{enumerate}
\end{theorem}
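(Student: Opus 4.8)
The plan for proving Theorem~\ref{t:translation} is to reduce every assertion to the generic fibre $E=X_K$ and to the group $\mathrm{Isom}_K(E)$ of $K$-automorphisms of $E$ regarded merely as a smooth projective curve (forgetting the origin). Since $E$ has genus $1$, a $K$-birational self-map of $E$ is automatically biregular, and a $B$-birational self-map of $X$ is the same thing as a $K$-automorphism of the function field $\C(X)=K(E)$ over $K$; hence $\Bir_B(X)\cong\mathrm{Isom}_K(E)$ canonically. The heart of the matter is then the single assertion that every $B$-birational self-map of $X$ is in fact a biregular $B$-automorphism, i.e. $\Bir_B(X)=\Aut_B(X)$. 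Granting this, (3) is immediate, and (1) is the special case of the translations $\tau_P$, $P\in E(K)=X(B)$, which visibly lie in $\mathrm{Isom}_K(E)$.

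First I would prove this biregularity statement, which I expect to be \textbf{the main obstacle}. Let $\phi\in\Bir_B(X)$. On the smooth locus $X^{\#}\subset X$ of $f$ — which is intrinsically defined and is the N\'eron model of $E/K$ — the map $\phi$ induces a genuine $B$-automorphism, by functoriality of the N\'eron model under $\mathrm{Isom}_K(E)$; moreover $\phi$ permutes the irreducible components of the reducible fibres (acting on the component groups), so $\phi$ is already a morphism away from the finitely many singular points of fibres, and in particular its indeterminacy locus is a finite set of points, each lying in some fibre $X_b$. If $\phi$ had an indeterminacy point, resolving $\phi$ by a finite sequence of blow-ups would produce on the resolution a $(-1)$-curve lying over $b$ whose image on the target copy of $X$ is again a curve contained in the fibre $X_b$; pushing this curve down to $X$ contradicts the relative minimality of $f$, which forbids $(-1)$-curves inside fibres. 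Hence $\phi$ is a morphism, and the same argument applied to $\phi^{-1}$ shows $\phi\in\Aut_B(X)$. (Equivalently, one may invoke the uniqueness of the relatively minimal model of an elliptic fibration; cf. \cite{Miranda} and \cite{shioda-schutt-lecture}.) This settles (1) and (3).

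It then remains to describe $\mathrm{Isom}_K(E)\cong\Aut_B(X)$ as an abstract group. For (2): given $P,Q\in E(K)$, the automorphisms $\tau_P\circ\tau_Q$ and $\tau_{P+Q}$ of $X$ agree on the generic fibre, hence are equal, so $P\mapsto\tau_P$ is a group homomorphism $E(K)\to\Aut_B(X)$ with $\tau_{-P}=\tau_P^{-1}$, and it is injective because $\tau_P=\Id$ forces $P=O$ on $X_K$. For (4): any $u\in\mathrm{Isom}_K(E)$ carries the origin $O$ to a point $Q=u(O)\in E(K)$, so $\tau_{-Q}\circ u$ is an automorphism of $E$ fixing $O$, which by rigidity of abelian varieties is a group homomorphism, i.e. an element of $\Aut_K(E)$; thus $\mathrm{Isom}_K(E)=E(K)\cdot\Aut_K(E)$ with $E(K)\cap\Aut_K(E)=\{\Id\}$, and $E(K)$ is normal since $u\circ\tau_P\circ u^{-1}=\tau_{\ell(P)}$, where $\ell\in\Aut_K(E)$ is the linear part of $u$; this yields the semidirect product decomposition $\mathrm{Isom}_K(E)\cong E(K)\rtimes\Aut_K(E)$. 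Finally, over $\overline{K}$ (algebraically closed of characteristic $0$), the automorphism group $\Aut_{\overline{K}}(E)$ of $E$ as an elliptic curve is cyclic, generated by $[-1]$ and hence of order $2$ when $j(E)\neq 0,1728$, of order $4$ when $j(E)=1728$ (with the extra automorphism $(x,y)\mapsto(-x,iy)$), and of order $6$ when $j(E)=0$ (with the extra automorphism $(x,y)\mapsto(\zeta x,-y)$ for $\zeta$ a primitive cube root of unity), by the classical classification of automorphisms of elliptic curves in characteristic zero. This completes the plan.
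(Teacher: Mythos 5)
Your proposal is correct in outline, but it takes a genuinely different route from the paper: the paper's entire proof of Theorem \ref{t:translation} is a set of citations (\cite[Theorem III.9.1]{silverman-ATACE} for (1)--(3), \cite[Theorem 0]{parshin-68} for (3), and \cite[III.10.1]{silverman-ACE} for (4)), whereas you give a self-contained derivation by identifying $\Bir_B(X)$ with $\mathrm{Isom}_K(E)$ through the generic fibre, proving $\Bir_B(X)=\Aut_B(X)$ from relative minimality, and then getting (2) by density of the generic fibre and (4) from the translation/linear-part decomposition together with the characteristic-zero classification of $\Aut_{\overline{K}}(E)$. This is the same mechanism that underlies the quoted results, so nothing is lost; what your route buys is independence from the reference, at the price of having to justify two nontrivial inputs yourself: the N\'eron property of the smooth locus (which you use only for convenience) and, crucially, the biregularity of birational $B$-self-maps.

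On that crucial step your sketch is too quick. If $E$ is the last exceptional $(-1)$-curve in a minimal resolution of the indeterminacy and $g$ is the induced morphism to the target copy of $X$, the image $C=g(E)$ is a rational vertical curve, but it need not be a $(-1)$-curve: contracting the other exceptional curves only gives $C^2\geq E^2=-1$, and Zariski's lemma leaves open the case $C^2=0$, i.e.\ $C$ an entire nodal or cuspidal rational fibre (type $I_1$ or $II$), which relative minimality does not forbid. The standard way to close this is via the canonical class: $K_Z\cdot E=-1$, $K_Z=g^*K_X+\mathcal{E}$ with $\mathcal{E}$ effective and $g$-exceptional, $E\not\subset\supp\mathcal{E}$ because $g$ does not contract $E$, and $K_X\cdot C\geq 0$ for every vertical curve on a relatively minimal elliptic surface (adjunction plus Zariski's lemma), yielding the contradiction $-1\geq 0$. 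Alternatively, your parenthetical appeal to the uniqueness of the relatively minimal model (\cite{Miranda}; see also \cite[Chapter~9]{liu-alg-geom}) is exactly the statement needed---as written, that citation, not the $(-1)$-curve push-down, is what carries the argument. The remaining parts ((2), (4), and the order of $\Aut_{\overline{K}}(E)$) are argued correctly.
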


\begin{proof}
See for example \cite[Theorem III.9.1]{silverman-ATACE} for (1), (2), (3), \cite[Theorem 0]{parshin-68} for (3),  
and \cite[AEC III.10.1]{silverman-ACE} for (4). 
\end{proof}

\bibliographystyle{siam}

\end{document}